\pgfplotsset{compat=1.15}
\theoremstyle{plain}
\newtheorem{thm}{Theorem}[section]
\newtheorem{fact}[thm]{Fact}
\newtheorem{prop}[thm]{Proposition}
\newtheorem{lem}[thm]{Lemma}
\newtheorem{conj}[thm]{Conjecture}
\newtheorem{quest}[thm]{Question}
\theoremstyle{definition}
\newtheorem{defn}{Definition}
\newtheorem{exmp}[thm]{Example}
\theoremstyle{remark}
\newcommand{\reg}[1]{#1_{\mathrm{reg}}}
\newcommand{\chow}[1]{\mathrm{CH}\left( #1\right)}
\newcommand{\tang}[2]{\mathrm{T}_{#1}{#2}}
\newcommand{\ed}[1]{\mathrm{ED}(#1)}
\newcommand{\numm}[1]{n_\mathrm{#1}}
\newcommand{\crit}{\mathrm{Crit}}
\newcommand{\grass}{\mathrm{Gr}}
\DeclareMathOperator{\degg}{\mathrm{deg}}
\DeclareMathOperator{\cspan}{\mathrm{colsp}}
\DeclareMathOperator{\rspan}{\mathrm{rowsp}}
\DeclareMathOperator{\spann}{\mathrm{span}}
\DeclareMathOperator{\rank}{\mathrm{rank}}
\newcommand{\RR}{\mathbb R}
\newcommand{\PP}{\mathbb P}
\newcommand{\CC}{\mathbb C}
\let\svthefootnote\thefootnote
\newcommand\freefootnote[1]{%
  \let\thefootnote\relax%
  \footnotetext{\hspace{-.5em}#1}%
  \let\thefootnote\svthefootnote%
}
\title{Critical Points of Degenerate Metrics \\on Algebraic Varieties: \\
A Tale of Overparametrization}
\author{%
Giovanni Luca Marchetti *\\
KTH Royal Institute of Technology
 \And
Erin Connelly *\\
University of Osnabr\"uck
\And 
Paul Breiding *\\
University of Osnabr\"uck
\And
Kathl\'en Kohn *\\
KTH Royal Institute of Technology \\ \& Digital Futures
}
\begin{document}

\maketitle

\begin{abstract}
    We study the critical points over an algebraic variety of an optimization problem defined by a quadratic objective that is degenerate. This scenario arises in machine learning when the dataset size is small with respect to the model, and is typically referred to as overparametrization. Our main result relates the degenerate optimization problem to a nondegenerate one via a projection. In the highly-degenerate regime, we find that a central role is played by the ramification locus of the projection. Additionally, we provide tools for counting the number of critical points over projective varieties, and discuss specific cases arising from deep learning. Our work bridges tools from algebraic geometry with ideas from machine learning, and it extends the line of literature around the Euclidean distance degree to the degenerate setting. 
\end{abstract}

{\small \textbf{Keywords:} Euclidean distance degree,
neuromanifolds, 
least-squares regression,
overparametrization. }

\section{Introduction}\label{sec:intro}
\freefootnote{*Equal contribution.}Optimization of a quadratic objective over a constrained space is central in machine learning, and many other branches of the sciences and engineering.
Often, the constraints are polynomial equalities or inequalities, making the underlying space a (semi-)algebraic variety.
This has motivated the development of tools from algebraic geometry for analyzing quadratic optimization problems over algebraic varieties.
The behavior of generic\footnote{We will use the term \emph{generic} when referring to quantities lying outside of some proper algebraic subset.} points on algebraic varieties allows us to define global invariants related to quadratic optimization. An important example is the \emph{Euclidean distance degree} (EDD), introduced in the seminal article~\cite{draisma2016euclidean}. This invariant quantifies the number of critical points of a quadratic form on a variety.

A typical regression objective of this form in machine learning and statistical inference is the mean-squared error, consisting in minimizing the squared distance between the model and the data. This problem can be rephrased as the optimization of a data-dependent quadratic form over the space of functions or distributions parametrized by the model. This space is often referred to as \emph{neuromanifold} in the context of machine learning \cite{amari2001geometrical,marchetti2025invitationneuroalgebraicgeometry} or, more generally, as hypothesis space.
Neuromanifolds of polynomial models -- e.g., multilayer perceptrons with polynomial activation or un-normalized self-attention mechanisms -- are (semi-)algebraic varieties.
As arbitrary neuromanifolds can be approximated by algebraic ones, their geometry and optimization properties can be understood by taking the limit of the algebraic setting
\cite{marchetti2025invitationneuroalgebraicgeometry}. Based on this, several works have studied quadratic optimization over algebraic neuromanifolds, especially their EDD \cite{trager2019pure, kubjas2024geometry, shahverdi2024geometryoptimizationpolynomialconvolutional, shahverdi2025algebraic, shahverdi2025learning, henry2024geometry,arjevani2025geometry,gafvert2020computational}.

A fundamental assumption underlying the above-mentioned literature is that the quadratic form is nondegenerate, i.e., it coincides with a squared distance from a target point in the ambient space.
In deep learning, this holds when the dataset size is large enough, but fails in scenarios of data scarcity.
In the latter case, the optimization problem is underspecified: the number of available data points is insufficient to determine the target unambiguously, and the model optimizes an objective with, potentially, a continuum of global minima. From an algebraic perspective, the quadratic form defined by the objective is degenerate, with rank proportional to the dataset size. The behavior of this optimization problem differs drastically from the nondegenerate case when the dataset size is significantly lower than the dimension of the neuromanifold or, more roughly, than the number of parameters, which is typical in contemporary large-scale models. This scenario is often referred to as  \emph{overparametrization} \cite{belkin2021fit}. 

Motivated by the above, in this work, we consider the problem of optimizing a \emph{degenerate} quadratic form over an algebraic variety. We analyze the critical points of the quadratic form over the smooth locus of the variety, generalizing the EDD to the degenerate scenario. 
The following example serves as an illustration of our setting and our results.

\begin{exmp} \label{ex:attention}
    The \emph{self-attention mechanism} is the key ingredient of the popular Transformer architecture \cite{vaswani2017attention}.  
    Its un-normalized variant -- often referred to as `linear' \cite{kleiman1974transversality} or `lightning' \cite{henry2024geometry} self-attention --  parametrizes cubic functions of the form 
\begin{align} \label{eq:attention}
        \mathbb{R}^{e \times t} &\to \mathbb{R}^{e'  \times t},\quad
    M \mapsto V MM^\top A M.
\end{align}
Here, the learnable weights are the entries of the \emph{value matrix} $V \in \mathbb{R}^{e' \times e}$ and the \emph{attention matrix} $A \in \mathbb{R}^{e \times e}$, where $A$ is of rank at most $a \leq e$ as it arises as the product of the so-called \emph{key} and \emph{query matrices}.
For fixed architecture parameters $e,e',a,t$, the neuromanifold is the set of all cubic functions \eqref{eq:attention} for varying $V$ and $A$.
Since \eqref{eq:attention} depends linearly on the entries of $V$ and $A$, the neuromanifold is a semi-algebraic set (see Section \ref{sect:algebraic_sets}), meaning that it can be described by polynomial equalities and inequalities inside the ambient space of all cubic functions in $M$.

For instance, for $e'=1$ and $e = t = a = 2$, it was shown in \cite{henry2024geometry} that the neuromanifold can be seen as a hypersurface in $\mathbb{R}^6$ with coordinates $(c_1,c_2,\ldots,c_6)$, where it satisfies the equation
\begin{equation}\label{attention_poly}
    c_1^2c_6^2 + c_4^2c_3^2 + c_1c_3c_5^2 + c_2^2c_4c_6 - 2c_1c_4c_3c_6 - c_2c_1c_6c_5 - c_2c_4c_3c_5 =0.
\end{equation}
A slice of that hypersurface with a generic 3-dimensional affine subspace is shown in Figure~\ref{fig:attention}.

We are interested in finding a point $c$ on the hypersurface that is closest to some point $u \in \mathbb{R}^6$, where closeness is measured by a positive semi-definite quadratic form $Q$.
For almost all choices of $u$ and positive definite $Q$, this problem has 14 complex critical points.
This number is the EDD of the hypersurface (more precisely, the \emph{generic} EDD; we explain the difference in Section~\ref{sec:quadratic_optimization}).
When the quadratic form $Q$ is degenerate, meaning that its associated bilinear form has a kernel $K$, the behavior can be different. 
For instance, for almost all choices of $u$ and positive semi-definite $Q$ with a 2-dimensional kernel, there are 4 isolated complex critical points plus a continuous curve of complex critical points.
For other fixed kernel dimensions $k := \dim  K$, the complex critical points are generically as follows: 
\begin{center}
\begin{tabular}{c|c}
$k = \dim  K$ & complex critical point set \\ \hline
$0$ & $14$ points\\
$1$ & $14$ points\\
$2$ & $4$ points $+$ a curve \\
$3$ & a surface \\
$4$ &a $3$-dimensional subvariety\\
$5$ & a $4$-dimensional subvariety\\
\end{tabular}
\end{center}
The continuous sets of critical points for $k \geq 2$ all arise from intersecting the hypersurface in \eqref{attention_poly} with the affine subspace $K+u$.
We discuss this example in detail in Section~\ref{sec:attention}.
\hfill $\diamondsuit$
\end{exmp}

\begin{figure}[!h]
    \centering
    \includegraphics[width=0.275\linewidth]{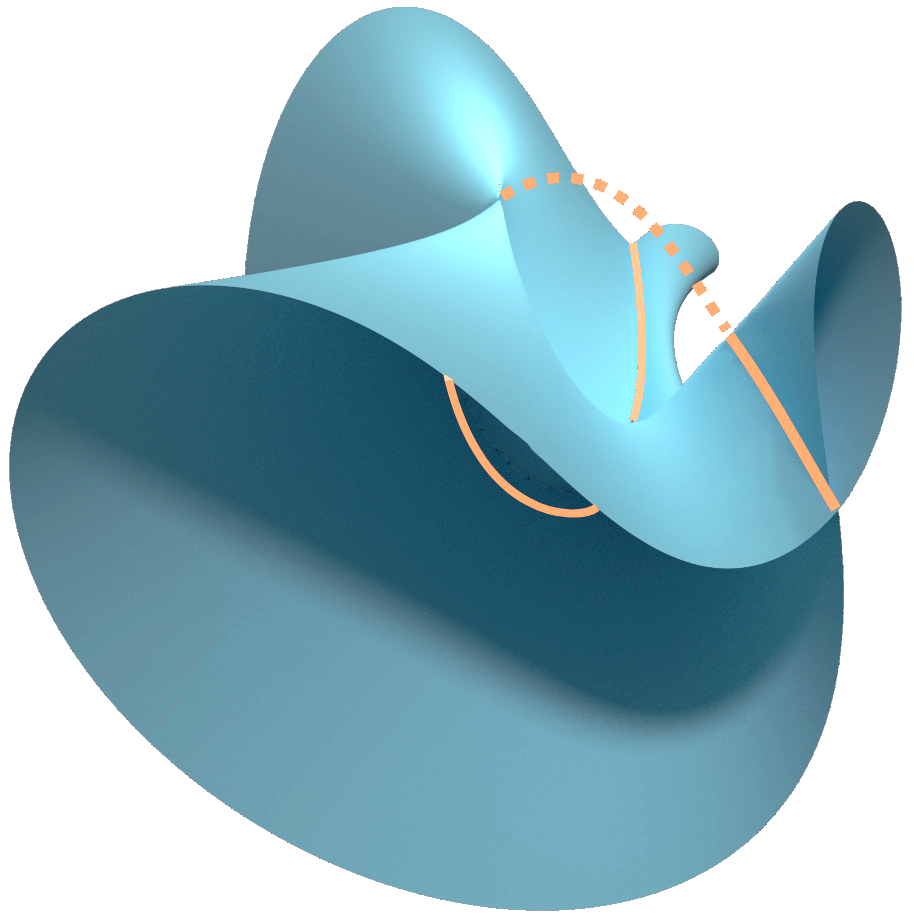}
    \caption{A slice of the neuromanifold of lightning self-attention mechanisms (adapted from \cite{henry2024geometry}). 
    }
    \label{fig:attention}
\end{figure}

\subsection{Overview of Main Results} \label{sec:overviewResults}
Here, we explain our results in a friendly way. Technical definitions are provided in the next section. 

We consider a (possibly nonlinear) algebraic variety $X \subset \RR^n$. Its dimension is denoted 
\begin{equation}\label{def:d}d := \dim  X.\end{equation}
While the technical definition of dimension is provided in the next section, intuitively one can think of $X$ being ``almost'' a manifold, meaning that locally around almost all points of $X$ it has manifold structure. These points are called smooth points; they form the smooth locus of $X$, and  the dimension of each of those manifolds is $d$. 

The main object of study in this paper are the (complex) critical points of a quadratic form $Q(x-u)$, where $x$ ranges over the smooth locus of $X$ and $u \in \RR^n$ is a given point in the ambient space. 
When the form is nondegenerate, the number of these complex critical points is the same for almost all $u$; this number is the EDD of the variety $X$ and the quadric $Q$ \cite{draisma2016euclidean}. 
We instead focus on the degenerate case, meaning the bilinear form associated with $Q$ has a kernel~$K$, whose dimension quantifies the degeneracy of $Q$. We denote that dimension
\begin{equation}\label{def:k}k := \dim  K.\end{equation}

We show that the case of degenerate quadrics $Q$ can be reduced, in a subtle way, to the nondegenerate setting via the orthogonal projection $\pi \colon \RR^n \to K^\perp$ onto the orthogonal complement of~$K$. The subtlety is caused by a paradigm shift that depends on the degeneracy $k$ of $Q$ compared to the codimension $n-d$ of~$X$. Note that $\pi$ turns $Q$ into a nondegenerate quadratic form  on $K^\perp$. The set of critical points of $\pi$ over $X$ is called the \emph{ramification locus}. 
Its image under that projection (i.e., the set of critical values of projecting $X$) is known as the \emph{branch locus}. Both are shaded in darker blue in Figures~\ref{fig:ellPlane}--\ref{fig:ellLineOutside}.  

\begin{thm}[Main Result, informal version]\label{main_informal}
For almost all\footnote{More precisely, \emph{almost all} refers to generic objects.} quadratic forms~$Q$ with a $k$-dimensional kernel and almost all $u \in \RR^n$, 
the critical points $x$ of $Q(x-u)$ satisfy the following:

\textbf{Case 1:} $k < n -d$\\(i.e., the quadric is mildly degenerate, its kernel has dimension less than the codimension of $X$).

In this case, the critical points  are in bijection with the ones  of the projected nondegenerate quadric over the projection of $X$; see Figure~\ref{fig:curve}.  In particular,  they are of finite cardinality, which is the EDD of the projection of $X$ (Theorem~\ref{prop:generalXsmallK}). 

\textbf{Case 2:} $k \geq n-d$\\ (i.e., the quadric is significantly degenerate, its kernel has dimension at least the codimension of $X$).

Here, there are two classes of critical points:
\begin{enumerate}
    \item All points in the intersection $(K + u) \cap X$ (red in Figures~\ref{fig:ellPlane} and~\ref{fig:ellLineInside}). These are the points $x$ on $X$ with zero loss, i.e.,  $Q(x-u)=0$.
    \item Finitely many points lying on the ramification locus (yellow in Figures~\ref{fig:ellPlane}--\ref{fig:ellLineOutside}). 
    
    If $X$ is sufficiently general, these points are in bijection with the critical points of the projected nondegenerate quadric over the branch locus (Theorem \ref{prop:generalXlargeK}).
    As such, they are counted by the EDD of the branch locus.
\end{enumerate}
\end{thm}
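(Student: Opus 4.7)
The core idea is to factor the degenerate quadric through the projection $\pi \colon \RR^n \to K^\perp$. Since $K$ is the kernel of the bilinear form associated to $Q$, one can write $Q(y) = \tilde Q(\pi(y))$ for a nondegenerate quadric $\tilde Q$ on $K^\perp$, and hence $\nabla Q(x-u) = \nabla \tilde Q(\pi x - \pi u)$ as a vector lying in $K^\perp \subset \RR^n$. For any $w \in K^\perp$ and $v \in \RR^n$ one has $\langle w, v \rangle = \langle w, \pi v \rangle$, so the criticality condition $\nabla Q(x-u) \perp T_x X$ at a smooth point $x \in X$ becomes the master equation
\begin{equation*}
\nabla \tilde Q(\pi x - \pi u) \;\perp\; \pi(T_x X) \quad \text{in } K^\perp.
\end{equation*}
The plan is to exploit this single equation in both regimes.

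In Case 1 ($k < n-d$), I would first observe that for generic $K$ a dimension count yields $T_x X \cap K = 0$ at every smooth $x \in X$, so $\pi|_X$ is unramified and birational onto its image $\pi(X)$. Under this identification $\pi(T_x X) = T_{\pi x} \pi(X)$, and the master equation says precisely that $\pi(x)$ is a critical point of $\tilde Q(\cdot - \pi u)$ restricted to $\pi(X)$. Since $\tilde Q$ is nondegenerate on $K^\perp$ and $\pi u$ is a generic point of $K^\perp$ when $u$ is generic in $\RR^n$, the number of such critical points is $\ed{\pi(X)}$, and lifting via the generically bijective $\pi|_X$ gives the claim.

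In Case 2 ($k \geq n-d$), I would split the critical points based on whether $x - u \in K$. If $x - u \in K$, then $\nabla Q(x-u) = 0$ and $x$ is automatically critical; this produces the entire intersection $(K + u) \cap \reg{X}$, which is a continuous component of dimension $d+k-n$. If $x - u \notin K$, then $\nabla \tilde Q(\pi x - \pi u) \neq 0$; since the generic value of $\dim \pi(T_x X)$ equals $\dim K^\perp = n-k$, the master equation forces $\pi(T_x X) \subsetneq K^\perp$, placing $x$ in the ramification locus $R$. At generic smooth $x \in R$ one expects the identification $T_{\pi x} B = \pi(T_x X)$ with $B = \pi(R)$ the branch locus, and together with generic birationality of $\pi|_R \colon R \to B$ (the ``sufficiently general'' hypothesis on $X$) the master equation translates to $\pi(x)$ being a critical point of $\tilde Q(\cdot - \pi u)$ on $B$. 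Counting these generically by $\ed{B}$ and lifting along $\pi|_R$ yields the second class of critical points.

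The main obstacle is verifying the two geometric facts underpinning Case 2: that $T_{\pi x} B$ agrees with $\pi(T_x X)$ at generic $x \in R$, and that $\pi|_R$ is generically birational onto $B$. The first would follow from a careful local analysis of the ramification locus via the determinantal description of rank-deficiency loci for $d\pi$, combined with a Sard-type computation $T B = d\pi(T R)$ at generic smooth points. The second is a transversality hypothesis on $X$ with respect to the family of kernels $K$, which I expect can be verified by a standard parameter-space argument. Everything else in the theorem then reduces to applying the definition of the EDD to the nondegenerate pair $(\tilde Q, \pi(X))$ or $(\tilde Q, B)$.
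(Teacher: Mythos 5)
Your overall strategy coincides with the paper's: factor $Q$ through the projection $\pi$ onto $K^\perp$, reduce Case~1 to the nondegenerate EDD of $\textnormal{cl}\,\pi(X)$, and in Case~2 split the critical set into the zero-loss locus $(K+u)\cap\reg{X}$ and finitely many points on the ramification locus matched with critical points of the branch locus. However, two of your supporting claims are wrong or unproven, and they are exactly where the paper has to work. In Case~1, the assertion that for generic $K$ one has $\tang{x}{X}\cap K=\{0\}$ at \emph{every} smooth $x$ (so that $\pi|_X$ is unramified) is false in general: by Lemma~\ref{lem:dimensioPolar} the stratum $P_{i,K}(X)$ has dimension $d-i(i+n-d-k)$, which is nonnegative for $i=1$ whenever $2d+k\geq n+1$, well within the range $k<n-d$. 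What is true is that critical points for generic $u$ avoid these higher-tangency strata, and that requires the dimension count of the pieces $E_{i,Q}(X)$ of the ED correspondence (Lemmata~\ref{lem:relevantComp} and~\ref{lem:dimensioPolar}, proved via Kleiman transversality applied to the Gauss map image in the Grassmannian). The same correspondence-dimension arguments are what make your ``lift along the generically bijective $\pi$'' step legitimate (critical points must be shown to avoid the loci $\Delta,\Delta'$ where $\pi$ is non-injective or the differential degenerates), and in Case~2 they are what bound $\dim E_{d+k-n+1,Q}(X)\leq n$ and hence give finiteness of the ramification-locus contribution; none of this is supplied in your sketch.

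The more serious gap is the birationality of $\pi$ restricted to the ramification locus $R=\mathrm{Ram}(\pi|_X)$, which you defer to ``a standard parameter-space argument.'' No such argument applies: $R$ is defined in terms of $K$, so you cannot move the center $K$ generically with respect to $R$, and projecting the $(n-k-1)$-dimensional variety $R$ from a $k$-dimensional center into the $(n-k)$-dimensional space $K^\perp$ is precisely the borderline dimension where a non-generic center can produce generic fibers of cardinality greater than one. The paper's proof of this step (Lemmata~\ref{lem:piBirationalOnRam} and~\ref{lem:projBirational}) is genuinely different: it passes to $\PP^n$, uses Bertini to arrange $\bar K\cap \mathrm{Ram}=\emptyset$ (finite fibers), identifies $R$ birationally with $\bar X^\vee\cap \bar K^\vee$ via projective biduality, and invokes Zak's theorem that generic fibers of Gauss maps are linear to force the fibers to be single points; this is also where the hypothesis that $\bar X^\vee$ is a hypersurface enters. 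Moreover, your identification $\tang{\pi(x)}{B}=\pi(\tang{x}{X})$ presupposes $\dim B=n-k-1$, i.e.\ generic finiteness of $\pi|_R$, so it cannot be obtained by a purely local Sard-type computation either. Without these ingredients the claimed bijection with $\crit_{\mathrm{Br}(\pi|_X),\pi(Q)}(\pi(u))$, and hence the count by the EDD of the branch locus, remains unproven.
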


\begin{figure}
    \centering
     \begin{subfigure}[b]{0.48\textwidth}
         \centering
        \tikzset{every picture/.style={line width=0.75pt}} 

\begin{tikzpicture}[x=0.75pt,y=0.75pt,yscale=-.7,xscale=.7]

\draw [color={rgb, 255:red, 154; green, 202; blue, 214 }  ,draw opacity=1 ][line width=1.5]    (160.13,170.63) .. controls (246.55,162.54) and (274.01,65.68) .. (230.38,89.32) ;
\draw [color={rgb, 255:red, 154; green, 202; blue, 214 }  ,draw opacity=1 ][line width=1.5]    (225.04,91.91) .. controls (200.98,109.21) and (199.23,127.11) .. (209.43,134.07) .. controls (213.71,136.99) and (220.1,137.99) .. (227.83,136.2) ;
\draw  [draw opacity=0][fill={rgb, 255:red, 238; green, 238; blue, 238 }  ,fill opacity=1 ][dash pattern={on 4.5pt off 4.5pt}] (162.92,223.3) -- (377.79,223.3) -- (317.49,280.24) -- (102.62,280.24) -- cycle ;
\draw [color={rgb, 255:red, 154; green, 202; blue, 214 }  ,draw opacity=1 ][line width=1.5]    (235.87,134.17) .. controls (251.92,131.43) and (280.11,123.18) .. (293.99,117.08) ;
\draw [color={rgb, 255:red, 66; green, 66; blue, 66 }  ,draw opacity=1 ]   (227.21,143.79) -- (227.21,191.36) ;
\draw [color={rgb, 255:red, 154; green, 202; blue, 214 }  ,draw opacity=1 ][line width=1.5]    (156.7,274.41) .. controls (175.57,273.44) and (235.51,265.39) .. (245.58,243.43) .. controls (255.65,221.46) and (196.7,222.19) .. (204.53,244.5) .. controls (212.36,266.81) and (271.91,273.4) .. (292.2,272.53) ;
\draw  [draw opacity=0][fill={rgb, 255:red, 66; green, 66; blue, 66 }  ,fill opacity=1 ] (222.85,240.85) .. controls (222.85,239.42) and (224.28,238.25) .. (226.04,238.25) .. controls (227.8,238.25) and (229.23,239.42) .. (229.23,240.85) .. controls (229.23,242.29) and (227.8,243.45) .. (226.04,243.45) .. controls (224.28,243.45) and (222.85,242.29) .. (222.85,240.85) -- cycle ;
\draw  [draw opacity=0][fill={rgb, 255:red, 255; green, 189; blue, 96 }  ,fill opacity=1 ] (244.14,110.37) .. controls (244.14,108.61) and (245.56,107.19) .. (247.33,107.19) .. controls (249.09,107.19) and (250.51,108.61) .. (250.51,110.37) .. controls (250.51,112.14) and (249.09,113.56) .. (247.33,113.56) .. controls (245.56,113.56) and (244.14,112.14) .. (244.14,110.37) -- cycle ;
\draw  [draw opacity=0][fill={rgb, 255:red, 255; green, 189; blue, 96 }  ,fill opacity=1 ] (200.72,240.14) .. controls (200.72,238.71) and (202.15,237.55) .. (203.91,237.55) .. controls (205.67,237.55) and (207.1,238.71) .. (207.1,240.14) .. controls (207.1,241.58) and (205.67,242.74) .. (203.91,242.74) .. controls (202.15,242.74) and (200.72,241.58) .. (200.72,240.14) -- cycle ;
\draw  [draw opacity=0][fill={rgb, 255:red, 255; green, 189; blue, 96 }  ,fill opacity=1 ] (217.34,95.03) .. controls (217.34,93.27) and (218.77,91.84) .. (220.53,91.84) .. controls (222.29,91.84) and (223.72,93.27) .. (223.72,95.03) .. controls (223.72,96.79) and (222.29,98.22) .. (220.53,98.22) .. controls (218.77,98.22) and (217.34,96.79) .. (217.34,95.03) -- cycle ;
\draw  [draw opacity=0][fill={rgb, 255:red, 255; green, 189; blue, 96 }  ,fill opacity=1 ] (200.57,120.31) .. controls (200.57,118.55) and (202,117.13) .. (203.76,117.13) .. controls (205.52,117.13) and (206.95,118.55) .. (206.95,120.31) .. controls (206.95,122.08) and (205.52,123.5) .. (203.76,123.5) .. controls (202,123.5) and (200.57,122.08) .. (200.57,120.31) -- cycle ;
\draw  [draw opacity=0][fill={rgb, 255:red, 255; green, 189; blue, 96 }  ,fill opacity=1 ] (218.22,227.85) .. controls (218.22,226.41) and (219.65,225.25) .. (221.41,225.25) .. controls (223.17,225.25) and (224.6,226.41) .. (224.6,227.85) .. controls (224.6,229.28) and (223.17,230.44) .. (221.41,230.44) .. controls (219.65,230.44) and (218.22,229.28) .. (218.22,227.85) -- cycle ;
\draw  [draw opacity=0][fill={rgb, 255:red, 255; green, 189; blue, 96 }  ,fill opacity=1 ] (243.49,238.2) .. controls (243.49,236.76) and (244.92,235.6) .. (246.68,235.6) .. controls (248.44,235.6) and (249.87,236.76) .. (249.87,238.2) .. controls (249.87,239.63) and (248.44,240.8) .. (246.68,240.8) .. controls (244.92,240.8) and (243.49,239.63) .. (243.49,238.2) -- cycle ;
\draw [color={rgb, 255:red, 66; green, 66; blue, 66 }  ,draw opacity=1 ]   (227.21,40.44) -- (227.21,132.89) ;
\draw  [draw opacity=0][fill={rgb, 255:red, 66; green, 66; blue, 66 }  ,fill opacity=1 ] (223.96,55.71) .. controls (223.96,53.95) and (225.38,52.52) .. (227.14,52.52) .. controls (228.91,52.52) and (230.33,53.95) .. (230.33,55.71) .. controls (230.33,57.47) and (228.91,58.9) .. (227.14,58.9) .. controls (225.38,58.9) and (223.96,57.47) .. (223.96,55.71) -- cycle ;
\draw  [color={rgb, 255:red, 196; green, 196; blue, 196 }  ,draw opacity=1 ] (100.12,10.1) -- (380.02,10.1) -- (380.02,290) -- (100.12,290) -- cycle ;


\end{tikzpicture}
         \caption{1-dimensional kernel $K$ (black). The critical points on the space curve (yellow) are in bijection with the critical points (yellow) after projecting onto the plane $K^\perp$ (grey).}
         \label{fig:curve}
        \end{subfigure}
        \hfill 
        \begin{subfigure}[b]{0.48\textwidth}
         \centering
          
\tikzset {_t7jb8j9v5/.code = {\pgfsetadditionalshadetransform{ \pgftransformshift{\pgfpoint{89.1 bp } { -108.9 bp }  }  \pgftransformscale{1.32 }  }}}
\pgfdeclareradialshading{_bn4kvcu91}{\pgfpoint{-72bp}{88bp}}{rgb(0bp)=(1,1,1);
rgb(0bp)=(1,1,1);
rgb(25bp)=(0.31,0.66,0.75);
rgb(400bp)=(0.31,0.66,0.75)}
\tikzset{every picture/.style={line width=0.75pt}} 

\begin{tikzpicture}[x=0.75pt,y=0.75pt,yscale=-.7,xscale=.7]

\draw [color={rgb, 255:red, 238; green, 238; blue, 238 }  ,draw opacity=1 ][line width=1.5]    (115.75,250.02) -- (147.81,250.02) ;
\draw [color={rgb, 255:red, 238; green, 238; blue, 238 }  ,draw opacity=1 ][line width=1.5]    (333.48,250.48) -- (365.54,250.48) ;
\draw [color={rgb, 255:red, 154; green, 202; blue, 214 }  ,draw opacity=1 ][line width=2.25]    (149.37,250.48) -- (333.48,250.48) ;
\draw  [draw opacity=0][shading=_bn4kvcu91,_t7jb8j9v5] (149.51,115.54) .. controls (149.61,90.97) and (190.9,71.22) .. (241.74,71.42) .. controls (292.59,71.62) and (333.72,91.71) .. (333.62,116.28) .. controls (333.52,140.85) and (292.23,160.6) .. (241.39,160.4) .. controls (190.54,160.2) and (149.41,140.11) .. (149.51,115.54) -- cycle ;
\draw  [color={rgb, 255:red, 2; green, 136; blue, 165 }  ,draw opacity=1 ][fill={rgb, 255:red, 255; green, 189; blue, 96 }  ,fill opacity=1 ] (145.33,115.54) .. controls (145.33,113.92) and (146.64,112.61) .. (148.26,112.61) .. controls (149.88,112.61) and (151.19,113.92) .. (151.19,115.54) .. controls (151.19,117.16) and (149.88,118.47) .. (148.26,118.47) .. controls (146.64,118.47) and (145.33,117.16) .. (145.33,115.54) -- cycle ;
\draw  [color={rgb, 255:red, 2; green, 136; blue, 165 }  ,draw opacity=1 ][fill={rgb, 255:red, 255; green, 189; blue, 96 }  ,fill opacity=1 ] (330.55,115.55) .. controls (330.55,113.94) and (331.86,112.62) .. (333.48,112.62) .. controls (335.1,112.62) and (336.41,113.94) .. (336.41,115.55) .. controls (336.41,117.17) and (335.1,118.49) .. (333.48,118.49) .. controls (331.86,118.49) and (330.55,117.17) .. (330.55,115.55) -- cycle ;
\draw  [draw opacity=0][fill={rgb, 255:red, 255; green, 136; blue, 136 }  ,fill opacity=1 ] (238.73,250.43) .. controls (238.73,248.81) and (240.04,247.5) .. (241.66,247.5) .. controls (243.28,247.5) and (244.59,248.81) .. (244.59,250.43) .. controls (244.59,252.05) and (243.28,253.37) .. (241.66,253.37) .. controls (240.04,253.37) and (238.73,252.05) .. (238.73,250.43) -- cycle ;
\draw  [color={rgb, 255:red, 2; green, 136; blue, 165 }  ,draw opacity=1 ][fill={rgb, 255:red, 255; green, 189; blue, 96 }  ,fill opacity=1 ] (144.88,250.28) .. controls (144.88,248.66) and (146.19,247.34) .. (147.81,247.34) .. controls (149.43,247.34) and (150.74,248.66) .. (150.74,250.28) .. controls (150.74,251.9) and (149.43,253.21) .. (147.81,253.21) .. controls (146.19,253.21) and (144.88,251.9) .. (144.88,250.28) -- cycle ;
\draw  [color={rgb, 255:red, 2; green, 136; blue, 165 }  ,draw opacity=1 ][fill={rgb, 255:red, 255; green, 189; blue, 96 }  ,fill opacity=1 ] (330.55,250.74) .. controls (330.55,249.12) and (331.86,247.81) .. (333.48,247.81) .. controls (335.1,247.81) and (336.41,249.12) .. (336.41,250.74) .. controls (336.41,252.36) and (335.1,253.67) .. (333.48,253.67) .. controls (331.86,253.67) and (330.55,252.36) .. (330.55,250.74) -- cycle ;
\draw  [draw opacity=0][fill={rgb, 255:red, 238; green, 238; blue, 238 }  ,fill opacity=1 ] (212.64,67.4) -- (264.82,18.26) -- (264.82,72.94) .. controls (257.98,72.06) and (250.84,71.55) .. (243.48,71.46) .. controls (242.79,71.2) and (242.1,71.06) .. (241.39,71.06) .. controls (232.86,71.06) and (225.95,91.06) .. (225.95,115.73) .. controls (225.95,140.4) and (232.86,160.4) .. (241.39,160.4) .. controls (241.39,160.4) and (241.4,160.4) .. (241.4,160.4) .. controls (245.03,160.4) and (248.6,160.29) .. (252.12,160.09) -- (212.64,197.27) -- (212.64,67.4) -- cycle ;
\draw  [draw opacity=0][fill={rgb, 255:red, 66; green, 66; blue, 66 }  ,fill opacity=1 ] (238.73,55.33) .. controls (238.73,53.71) and (240.04,52.4) .. (241.66,52.4) .. controls (243.28,52.4) and (244.59,53.71) .. (244.59,55.33) .. controls (244.59,56.95) and (243.28,58.26) .. (241.66,58.26) .. controls (240.04,58.26) and (238.73,56.95) .. (238.73,55.33) -- cycle ;
\draw [color={rgb, 255:red, 255; green, 136; blue, 136 }  ,draw opacity=1 ][line width=1.5]    (241.74,71.42) .. controls (219.42,72.14) and (221.07,158.66) .. (241.27,160.48) ;
\draw [color={rgb, 255:red, 255; green, 136; blue, 136 }  ,draw opacity=1 ][line width=1.5]  [dash pattern={on 1.69pt off 2.76pt}]  (242.71,71.39) .. controls (262.81,73.89) and (260.7,159.19) .. (242.23,160.45) ;
\draw  [color={rgb, 255:red, 196; green, 196; blue, 196 }  ,draw opacity=1 ] (102.1,8.1) -- (382,8.1) -- (382,288) -- (102.1,288) -- cycle ;


\end{tikzpicture}
         \caption{2-dimensional kernel $K$ (grey) intersects the surface in 1-dimensional set of zero-loss solutions (red). Ramification/branch locus consists of 2 points, both being critical points (yellow).}
         \label{fig:ellPlane}
     \end{subfigure} 

     \vspace{1em}

     \begin{subfigure}[t]{0.48\textwidth}
         \centering
        \tikzset {_ywwjew6e3/.code = {\pgfsetadditionalshadetransform{ \pgftransformshift{\pgfpoint{89.1 bp } { -108.9 bp }  }  \pgftransformscale{1.32 }  }}}
\pgfdeclareradialshading{_st2na5o78}{\pgfpoint{-72bp}{88bp}}{rgb(0bp)=(1,1,1);
rgb(0bp)=(1,1,1);
rgb(25bp)=(0.31,0.66,0.75);
rgb(400bp)=(0.31,0.66,0.75)}
\tikzset{every picture/.style={line width=0.75pt}} 

\begin{tikzpicture}[x=0.75pt,y=0.75pt,yscale=-.7,xscale=.7]

\draw  [draw opacity=0][shading=_st2na5o78,_ywwjew6e3] (143.87,110.55) .. controls (143.96,85.98) and (185.26,66.22) .. (236.1,66.43) .. controls (286.94,66.63) and (328.08,86.72) .. (327.98,111.29) .. controls (327.88,135.86) and (286.59,155.61) .. (235.74,155.41) .. controls (184.9,155.2) and (143.77,135.12) .. (143.87,110.55) -- cycle ;
\draw  [draw opacity=0][fill={rgb, 255:red, 238; green, 238; blue, 238 }  ,fill opacity=1 ][dash pattern={on 4.5pt off 4.5pt}] (162.63,214.75) -- (380.59,214.75) -- (318.45,273.14) -- (100.48,273.14) -- cycle ;
\draw [color={rgb, 255:red, 2; green, 136; blue, 165 }  ,draw opacity=1 ][line width=1.5]    (143.87,110.55) .. controls (145.35,142.74) and (323.98,141) .. (327.84,112.77) ;
\draw [color={rgb, 255:red, 2; green, 136; blue, 165 }  ,draw opacity=1 ][line width=1.5]  [dash pattern={on 1.69pt off 2.76pt}]  (143.87,110.55) .. controls (152.11,78.89) and (323.21,85.06) .. (327.84,112.77) ;
\draw [color={rgb, 255:red, 66; green, 66; blue, 66 }  ,draw opacity=1 ] [dash pattern={on 0.84pt off 2.51pt}]  (236.1,66.43) -- (235.74,155.41) ;
\draw [color={rgb, 255:red, 66; green, 66; blue, 66 }  ,draw opacity=1 ][line width=0.75]    (236.1,34.02) -- (236.1,66.43) ;
\draw [color={rgb, 255:red, 66; green, 66; blue, 66 }  ,draw opacity=1 ]   (235.74,155.41) -- (235.74,184.99) ;
\draw  [color={rgb, 255:red, 2; green, 136; blue, 165 }  ,draw opacity=1 ][fill={rgb, 255:red, 177; green, 224; blue, 236 }  ,fill opacity=1 ][line width=1.5]  (143.72,243.17) .. controls (143.72,230.65) and (184.94,220.49) .. (235.78,220.49) .. controls (286.62,220.49) and (327.84,230.65) .. (327.84,243.17) .. controls (327.84,255.7) and (286.62,265.86) .. (235.78,265.86) .. controls (184.94,265.86) and (143.72,255.7) .. (143.72,243.17) -- cycle ;
\draw  [draw opacity=0][fill={rgb, 255:red, 255; green, 136; blue, 136 }  ,fill opacity=1 ] (233.17,66.43) .. controls (233.17,64.81) and (234.48,63.5) .. (236.1,63.5) .. controls (237.72,63.5) and (239.03,64.81) .. (239.03,66.43) .. controls (239.03,68.05) and (237.72,69.36) .. (236.1,69.36) .. controls (234.48,69.36) and (233.17,68.05) .. (233.17,66.43) -- cycle ;
\draw  [draw opacity=0][fill={rgb, 255:red, 255; green, 136; blue, 136 }  ,fill opacity=1 ] (232.81,155.41) .. controls (232.81,153.79) and (234.13,152.48) .. (235.74,152.48) .. controls (237.36,152.48) and (238.68,153.79) .. (238.68,155.41) .. controls (238.68,157.03) and (237.36,158.34) .. (235.74,158.34) .. controls (234.13,158.34) and (232.81,157.03) .. (232.81,155.41) -- cycle ;
\draw  [draw opacity=0][fill={rgb, 255:red, 255; green, 189; blue, 96 }  ,fill opacity=1 ] (140.93,109.3) .. controls (140.93,107.68) and (142.25,106.37) .. (143.87,106.37) .. controls (145.48,106.37) and (146.8,107.68) .. (146.8,109.3) .. controls (146.8,110.92) and (145.48,112.23) .. (143.87,112.23) .. controls (142.25,112.23) and (140.93,110.92) .. (140.93,109.3) -- cycle ;
\draw  [draw opacity=0][fill={rgb, 255:red, 255; green, 189; blue, 96 }  ,fill opacity=1 ] (324.91,109.31) .. controls (324.91,107.69) and (326.22,106.38) .. (327.84,106.38) .. controls (329.46,106.38) and (330.77,107.69) .. (330.77,109.31) .. controls (330.77,110.93) and (329.46,112.25) .. (327.84,112.25) .. controls (326.22,112.25) and (324.91,110.93) .. (324.91,109.31) -- cycle ;
\draw  [draw opacity=0][fill={rgb, 255:red, 255; green, 136; blue, 136 }  ,fill opacity=1 ] (233.17,242.66) .. controls (233.17,241.04) and (234.48,239.73) .. (236.1,239.73) .. controls (237.72,239.73) and (239.03,241.04) .. (239.03,242.66) .. controls (239.03,244.28) and (237.72,245.59) .. (236.1,245.59) .. controls (234.48,245.59) and (233.17,244.28) .. (233.17,242.66) -- cycle ;
\draw  [draw opacity=0][fill={rgb, 255:red, 255; green, 189; blue, 96 }  ,fill opacity=1 ] (140.79,241.93) .. controls (140.79,240.31) and (142.1,238.99) .. (143.72,238.99) .. controls (145.34,238.99) and (146.66,240.31) .. (146.66,241.93) .. controls (146.66,243.55) and (145.34,244.86) .. (143.72,244.86) .. controls (142.1,244.86) and (140.79,243.55) .. (140.79,241.93) -- cycle ;
\draw  [draw opacity=0][fill={rgb, 255:red, 255; green, 189; blue, 96 }  ,fill opacity=1 ] (324.91,241.93) .. controls (324.91,240.31) and (326.22,238.99) .. (327.84,238.99) .. controls (329.46,238.99) and (330.77,240.31) .. (330.77,241.93) .. controls (330.77,243.55) and (329.46,244.86) .. (327.84,244.86) .. controls (326.22,244.86) and (324.91,243.55) .. (324.91,241.93) -- cycle ;
\draw  [draw opacity=0][fill={rgb, 255:red, 66; green, 66; blue, 66 }  ,fill opacity=1 ] (233.24,47.46) .. controls (233.24,45.84) and (234.56,44.53) .. (236.17,44.53) .. controls (237.79,44.53) and (239.11,45.84) .. (239.11,47.46) .. controls (239.11,49.08) and (237.79,50.39) .. (236.17,50.39) .. controls (234.56,50.39) and (233.24,49.08) .. (233.24,47.46) -- cycle ;
\draw  [draw opacity=0][fill={rgb, 255:red, 255; green, 189; blue, 96 }  ,fill opacity=1 ] (233.17,133.56) .. controls (233.17,131.94) and (234.48,130.62) .. (236.1,130.62) .. controls (237.72,130.62) and (239.03,131.94) .. (239.03,133.56) .. controls (239.03,135.18) and (237.72,136.49) .. (236.1,136.49) .. controls (234.48,136.49) and (233.17,135.18) .. (233.17,133.56) -- cycle ;
\draw  [draw opacity=0][fill={rgb, 255:red, 255; green, 189; blue, 96 }  ,fill opacity=1 ] (233.17,88.8) .. controls (233.17,87.18) and (234.48,85.87) .. (236.1,85.87) .. controls (237.72,85.87) and (239.03,87.18) .. (239.03,88.8) .. controls (239.03,90.42) and (237.72,91.74) .. (236.1,91.74) .. controls (234.48,91.74) and (233.17,90.42) .. (233.17,88.8) -- cycle ;
\draw  [draw opacity=0][fill={rgb, 255:red, 255; green, 189; blue, 96 }  ,fill opacity=1 ] (233.17,265.81) .. controls (233.17,264.19) and (234.48,262.88) .. (236.1,262.88) .. controls (237.72,262.88) and (239.03,264.19) .. (239.03,265.81) .. controls (239.03,267.43) and (237.72,268.74) .. (236.1,268.74) .. controls (234.48,268.74) and (233.17,267.43) .. (233.17,265.81) -- cycle ;
\draw  [draw opacity=0][fill={rgb, 255:red, 255; green, 189; blue, 96 }  ,fill opacity=1 ] (233.17,220.29) .. controls (233.17,218.67) and (234.48,217.35) .. (236.1,217.35) .. controls (237.72,217.35) and (239.03,218.67) .. (239.03,220.29) .. controls (239.03,221.91) and (237.72,223.22) .. (236.1,223.22) .. controls (234.48,223.22) and (233.17,221.91) .. (233.17,220.29) -- cycle ;
\draw  [color={rgb, 255:red, 196; green, 196; blue, 196 }  ,draw opacity=1 ] (100.06,3.1) -- (379.96,3.1) -- (379.96,283) -- (100.06,283) -- cycle ;


\end{tikzpicture}
         \caption{1-dimensional kernel $K$ (black) intersects the surface at 2 zero-loss solutions (red). Ramification/branch locus is a curve (darker blue), containing 4 critical points (yellow).}
         \label{fig:ellLineInside}
     \end{subfigure}
     \hfill 
     \begin{subfigure}[t]{0.48\textwidth}
         \centering
        \tikzset {_3yh4m3lve/.code = {\pgfsetadditionalshadetransform{ \pgftransformshift{\pgfpoint{89.1 bp } { -108.9 bp }  }  \pgftransformscale{1.32 }  }}}
\pgfdeclareradialshading{_ysapazymc}{\pgfpoint{-72bp}{88bp}}{rgb(0bp)=(1,1,1);
rgb(0bp)=(1,1,1);
rgb(25bp)=(0.31,0.66,0.75);
rgb(400bp)=(0.31,0.66,0.75)}
\tikzset{every picture/.style={line width=0.75pt}} 

\begin{tikzpicture}[x=0.75pt,y=0.75pt,yscale=-.7,xscale=.7]

\draw [color={rgb, 255:red, 66; green, 66; blue, 66 }  ,draw opacity=1 ]   (343.73,41.21) -- (343.38,192.18) ;
\draw  [draw opacity=0][shading=_ysapazymc,_3yh4m3lve] (143.3,117.55) .. controls (143.4,92.98) and (184.7,73.22) .. (235.54,73.43) .. controls (286.38,73.63) and (327.52,93.72) .. (327.42,118.29) .. controls (327.32,142.86) and (286.02,162.61) .. (235.18,162.41) .. controls (184.34,162.2) and (143.2,142.12) .. (143.3,117.55) -- cycle ;
\draw  [draw opacity=0][fill={rgb, 255:red, 238; green, 238; blue, 238 }  ,fill opacity=1 ][dash pattern={on 4.5pt off 4.5pt}] (162.07,221.75) -- (380.03,221.75) -- (317.88,280.14) -- (99.92,280.14) -- cycle ;
\draw [color={rgb, 255:red, 2; green, 136; blue, 165 }  ,draw opacity=1 ][line width=1.5]    (143.3,117.55) .. controls (144.79,149.74) and (323.42,148) .. (327.28,119.77) ;
\draw [color={rgb, 255:red, 2; green, 136; blue, 165 }  ,draw opacity=1 ][line width=1.5]  [dash pattern={on 1.69pt off 2.76pt}]  (143.3,117.55) .. controls (151.54,85.89) and (322.65,92.06) .. (327.28,119.77) ;
\draw  [color={rgb, 255:red, 2; green, 136; blue, 165 }  ,draw opacity=1 ][fill={rgb, 255:red, 177; green, 224; blue, 236 }  ,fill opacity=1 ][line width=1.5]  (143.16,250.17) .. controls (143.16,237.65) and (184.38,227.49) .. (235.22,227.49) .. controls (286.06,227.49) and (327.28,237.65) .. (327.28,250.17) .. controls (327.28,262.7) and (286.06,272.86) .. (235.22,272.86) .. controls (184.38,272.86) and (143.16,262.7) .. (143.16,250.17) -- cycle ;
\draw  [draw opacity=0][fill={rgb, 255:red, 66; green, 66; blue, 66 }  ,fill opacity=1 ] (340.82,54.46) .. controls (340.82,52.84) and (342.13,51.53) .. (343.75,51.53) .. controls (345.37,51.53) and (346.69,52.84) .. (346.69,54.46) .. controls (346.69,56.08) and (345.37,57.39) .. (343.75,57.39) .. controls (342.13,57.39) and (340.82,56.08) .. (340.82,54.46) -- cycle ;
\draw  [color={rgb, 255:red, 196; green, 196; blue, 196 }  ,draw opacity=1 ] (99.5,10.1) -- (379.4,10.1) -- (379.4,290) -- (99.5,290) -- cycle ;
\draw  [draw opacity=0][fill={rgb, 255:red, 66; green, 66; blue, 66 }  ,fill opacity=1 ] (338.34,250.17) .. controls (338.34,248.56) and (339.66,247.24) .. (341.28,247.24) .. controls (342.9,247.24) and (344.21,248.56) .. (344.21,250.17) .. controls (344.21,251.79) and (342.9,253.11) .. (341.28,253.11) .. controls (339.66,253.11) and (338.34,251.79) .. (338.34,250.17) -- cycle ;
\draw  [draw opacity=0][fill={rgb, 255:red, 255; green, 189; blue, 96 }  ,fill opacity=1 ] (140.23,250.17) .. controls (140.23,248.56) and (141.54,247.24) .. (143.16,247.24) .. controls (144.78,247.24) and (146.09,248.56) .. (146.09,250.17) .. controls (146.09,251.79) and (144.78,253.11) .. (143.16,253.11) .. controls (141.54,253.11) and (140.23,251.79) .. (140.23,250.17) -- cycle ;
\draw  [draw opacity=0][fill={rgb, 255:red, 255; green, 189; blue, 96 }  ,fill opacity=1 ] (324.34,250.17) .. controls (324.34,248.56) and (325.66,247.24) .. (327.28,247.24) .. controls (328.9,247.24) and (330.21,248.56) .. (330.21,250.17) .. controls (330.21,251.79) and (328.9,253.11) .. (327.28,253.11) .. controls (325.66,253.11) and (324.34,251.79) .. (324.34,250.17) -- cycle ;
\draw  [draw opacity=0][fill={rgb, 255:red, 255; green, 189; blue, 96 }  ,fill opacity=1 ] (140.37,117.55) .. controls (140.37,115.93) and (141.68,114.62) .. (143.3,114.62) .. controls (144.92,114.62) and (146.23,115.93) .. (146.23,117.55) .. controls (146.23,119.17) and (144.92,120.48) .. (143.3,120.48) .. controls (141.68,120.48) and (140.37,119.17) .. (140.37,117.55) -- cycle ;
\draw  [draw opacity=0][fill={rgb, 255:red, 255; green, 189; blue, 96 }  ,fill opacity=1 ] (324.34,116.84) .. controls (324.34,115.22) and (325.66,113.91) .. (327.28,113.91) .. controls (328.9,113.91) and (330.21,115.22) .. (330.21,116.84) .. controls (330.21,118.46) and (328.9,119.77) .. (327.28,119.77) .. controls (325.66,119.77) and (324.34,118.46) .. (324.34,116.84) -- cycle ;


\end{tikzpicture}
         \caption{1-dimensional kernel $K$ (black) does not intersect surface over $\RR$, so the global minimizer and maximizer (yellow) lie on the ramification locus.
     }
         \label{fig:ellLineOutside}
     \end{subfigure}
    \caption{Illustration of different scenarios, where $X$ is either a curve (a) or a surface (b), (c) or (d) in $\RR^3$. } 
    \label{fig:curveEllipsoid} 
\end{figure}
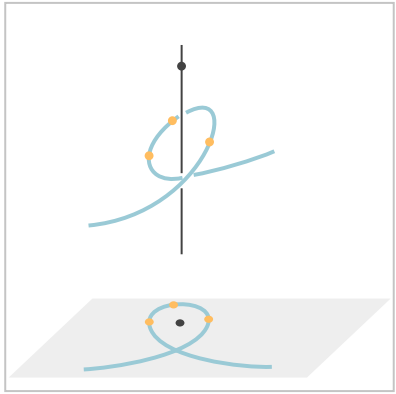
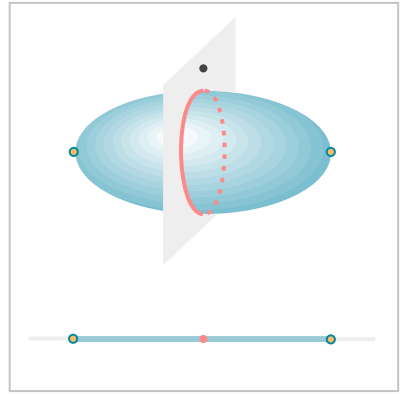
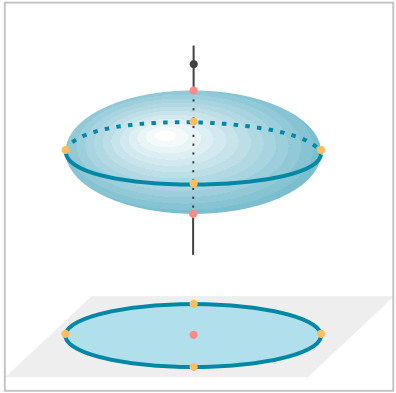
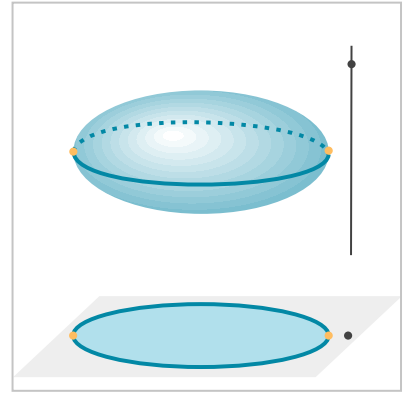

In the case $k > n-d$, the set of zero-loss solutions, by a dimension count, is expected to be positive-dimensional and continuous, as in Figure~\ref{fig:ellPlane} and Example~\ref{ex:attention}.\enlargethispage{\baselineskip}
If that set contains real solutions, it comprises the global minima. 
However, depending on the geometry of $X$, it can be empty over $\RR$ (even with positive probability over $u$), as in Figure~\ref{fig:ellLineOutside}.
In that case, the global minima of $Q(x-u)$ must be located on the ramification locus of projecting $X$.
This yields an interesting phenomenon: when the quadric is significantly degenerate, the optimization problem exhibits a bias towards the ramification locus.
 Note that this locus only depends on the kernel $K$ (not on $Q$ itself) and in particular not on $u$. However, when $u$ varies, then  the finitely many critical points on the ramification locus vary within that locus, cf.\ Figure~\ref{fig:ellLineOutside}.

Theorem \ref{main_informal} does not hold for every quadratic form, but only for sufficiently generic ones.
For a particular choice of quadric, the critical point sets may look different.
We illustrate this in
Section~\ref{sec:attention}, where we revisit the self-attention Example~\ref{ex:attention}, and in 
Section~\ref{sec:determinant}, where we investigate the \emph{determinantal variety} $X$ consisting of matrices with bounded rank together with a standard degenerate quadric $Q$.
The latter scenario is relevant in machine learning, since it corresponds to minimizing the mean-squared error over a multilayer perceptron with no activation function.

Finally, we are interested in determining the cardinalities of the finite sets of critical points described in Theorem~\ref{main_informal}, i.e., the EDD of the projection of $X$ (for $k < n-d)$ and the EDD of the branch locus of projecting $X$ (for $k \geq n-d$).
This is simpler if $X$ is defined by  homogeneous equations (as in Example~\ref{ex:attention}) or, equivalently, it is a \emph{projective variety}.     
Then, the EDD of the projection of $X$ (for $k < n-d$) coincides with the EDD of $X$ itself \cite[Corollary~6.1]{draisma2016euclidean},
which in turn equals the sum of the \emph{polar degrees} of $X$
\cite[Theorem~5.4]{draisma2016euclidean}.
The latter are classical invariants of projective varieties related to Chern classes \cite[Chapter~4]{mag}. We generalize this result, by showing that the EDD of the branch locus (for $k \geq n-d$) is a \emph{partial} sum of the polar degrees (Theorem \ref{thm:polardegs}).

\bigskip
\section{Background}\label{sec:background}
The purpose of this section is to provide the background needed to motivate and prove Theorem \ref{main_informal}. For this reason, we give short introductions into algebraic sets (Section \ref{sect:algebraic_sets}), the algebraic structure of quadratic optimization on such sets (Section \ref{sec:quadratic_optimization}), and finally into why optimization in machine learning often means optimization with degenerate quadrics (Section \ref{sec:machinepersp}). 

\subsection{Algebraic sets}\label{sect:algebraic_sets}

A real or complex \emph{algebraic variety} $X$ is a subset of $\mathbb{K}^n$ that is the solution set of finitely many polynomials in $\mathbb{K}[x_1,\ldots,x_n]$, where $\mathbb{K}$ is either $\RR$ or $\CC$.
In this article, we assume that $X$ is \emph{irreducible}, meaning that it cannot be written as the union of two proper non-empty subvarieties. If a variety is not irreducible, it is a finite union of irreducible varieties, so we can always consider these irreducible components individually. 

To define the singular locus of $X$, we consider its \emph{prime ideal} $I_X$ that consists of all polynomials in $\mathbb{K}[x_1,\ldots,x_n]$ that vanish along $X$ and fix a generating set $\langle f_1, \ldots, f_s \rangle = I_X$.
We then compute the $s \times n$ Jacobian matrix $J$ whose $(i,j)$-th entry is $\frac{\partial f_i}{\partial x_j}$.
For almost every point $p$ on $X$ (i.e., except for $p$ on some proper subvariety) the rank of $J(p)$ is the same. 
That rank is the \emph{codimension} $c$ of~$X$. The dimension is then 
\begin{equation}d  = \dim X = n-c.\end{equation}
The points $p$ on $X$ where the Jacobian $J(p)$ attains that rank are called \emph{smooth} or \emph{regular}. The remaining points on $X$ (where the rank of the Jacobian is less than the codimension of $X$) are the \emph{singular} ones.
They form a subvariety of $X$, defined by the prime ideal $I_X$ and the $c \times c$ minors of the Jacobian. We denote by $\reg{X}$ the set of smooth points of $X$.  

\begin{exmp}\label{ex:attentionSing}
    The attention hypersurface from Example~\ref{ex:attention} defined by the polynomial \eqref{eq:attention} is singular precisely at the points $c \in \RR^6$ where the matrix $\left[ \begin{smallmatrix}
        c_1 & c_2 & c_3 \\ c_4 & c_5 & c_6
    \end{smallmatrix}\right]$
    has rank at most one. This four-dimensional subvariety is depicted as the orange curve in the slice in Figure~\ref{fig:attention}. \hfill $\diamondsuit$
\end{exmp}

A \emph{semialgebraic set} in $\RR^n$ is a finite union of subsets of $\RR^n$, each defined by polynomial equalities and inequalities.
The Tarski--Seidenberg theorem \cite[Theorem~4.17]{michalek2021invitation} implies that the image of every polynomial map between real algebraic varieties is a semialgebraic set. We refer to \cite[Section 4]{BREIDING2023374}
for a detailed discussion on the structure of images of polynomial maps.
In particular, when a machine learning model parametrizes functions that are polynomial both in their input and in the parameters, then the set of parametrized functions (i.e., the neuromanifold) is a semialgebraic subset of a finite-dimensional vector space.

\begin{exmp}\label{exmp:determinantal}
    A multilayer perceptron parametrizes functions of the form 
    \begin{align} 
    \label{eq:MLP}
        \alpha_L \circ \sigma \circ \alpha_{L-1} \circ \sigma \circ \cdots \circ \sigma \circ \alpha_1,
    \end{align}
    where the $\alpha_i \colon \RR^{n_{i-1}} \to \RR^{n_i}$ are learnable affine linear functions and $\sigma$ is a nonlinear activation function that gets applied elementwise.
    If $\sigma$ is a polynomial of degree $\delta$, then the end-to-end function $\RR^{n_0} \to \RR^{n_L}$ in \eqref{eq:MLP} consists of $n_L$ polynomials of degree of at most $D:=\delta^{L-1}$.
    Thus, the neuromanifold is a semialgebraic set
    \begin{equation}X\subset (\RR[x_1, \ldots, x_{n_0}]_{\leq D})^{n_L}.\end{equation}
    Here, the subscript ``$\leq D$'' means multivariate polynomials of degree at most $D$. 

    In the case where all $\alpha_i$ are linear and $\sigma$ is the identity, the neuromanifold is actually a variety (i.e., polynomial inequalities are not needed when describing it). It is the \emph{determinantal variety} consisting of all linear maps $\RR^{n_0} \to \RR^{n_L}$ of rank at most $r := \min \{ n_0,n_1,\ldots,n_L \}$. In terms of matrices, this corresponds to: 
    \begin{align*}
        X := \{ W \in \RR^{n_L \times n_0} \mid \textnormal{all $(r+1)\times (r+1)$-minors of $W$ vanish} \},
    \end{align*}
    which we discuss in Section~\ref{sec:determinant}.
    \hfill $\diamondsuit$
\end{exmp}

The \emph{Zariski closure} of a subset $S$ in $\mathbb{K}^n$ is the smallest algebraic variety containing $S$.
For a semialgebraic set $S$, taking the Zariski closure essentially means to forget its defining inequalities and only keep the equalities.

\begin{exmp} \label{ex:attentionInequ}
    We have already seen in Example~\ref{ex:attention} that lightning self-attention mechanisms form a semialgebraic neuromanifold. In fact, for the concrete example $e'=1$ and $e = t = a = 2$, the neuromanifold is a 5-dimensional semialgebraic subset of $\RR^6$ that is defined by the polynomial equation \eqref{eq:attention} and the inequalities
    \begin{align}\label{eq:attentionInequalities}
        c_2^2 - 4c_1c_3 \geq 0 \quad \textnormal{ and } \quad c_5^2 - 4c_4c_6 \geq 0.
    \end{align}
    In Figure~\ref{fig:attention}, these inequalities cut off the dashed part of the orange curve. 
    The Zariski closure of the neuromanifold is defined by the single equation \eqref{eq:attention}.
    This algebraic variety does contain the dashed part of the orange curve in Figure~\ref{fig:attention}.
    Moreover, this means that the neuromanifold has boundary points in the Euclidean topology that its Zariski closure inherits from the ambient $\RR^6$. The boundary is described by the equation \eqref{eq:attention} of the Zariski closure together with the vanishing of the polynomials in \eqref{eq:attentionInequalities}. It is a 3-dimensional subset of the singular locus described in Example~\ref{ex:attentionSing}. We explain how to derive all (in)equalities in Section~\ref{sec:attention}.
    \hfill$\diamondsuit$
\end{exmp}

\subsection{Quadratic optimization}\label{sec:quadratic_optimization}

In this article, we study the optimization of a quadratic form over a real algebraic variety $X \subset \RR^n$, focusing on the critical points. Since optimization is often performed via gradient descent, the critical points are the most interesting ones, corresponding to the equilibria of the gradient flow. As customary in the EDD literature, we consider only the smooth locus $\reg{X}$. While singular and (Zariski) boundary points play an important role in some machine learning scenarios \cite{shahverdi2025learning} (cf.\ Example \ref{ex:attentionSing}), we leave the consideration of singularities and semialgebraic sets for future work. Thus, we wish to analyze the critical points of the map 
\begin{align}
\label{eq:quadricDistance}
    \reg{X} \to \RR, \quad x \mapsto Q(x-u),
\end{align}
where $u \in \RR^n$ and $Q$ is a quadratic form on $\RR^n$.
In other words, we are interested in the smooth points $x \in \reg{X}$ such that $Q(v, x- u) = 0$ for all $v$ in the tangent space $\tang{x}{X}$. We point out that $\reg{X}$ is an open set in both the Euclidean or the Zariski topology on $X$. 

If one wants to find all real critical points, it is in general hard to work only over the real numbers. This is because the number of critical points changes with varying $u$ or $Q$, and so one typically does not have a certificate for when all critical points have been found.
However, over the complex numbers, we have a globally defined count of critical points, which enables us to find all real critical points by first finding all complex ones and then disregarding the non-real solutions.

To explain this formally, we consider the Zariski closure $X(\CC) \subseteq \CC^n$ of the real variety $X$ when viewed as a subset of $\CC^n$ (i.e., $X(\CC)$ is the set of all complex points satisfying the same polynomial equations as $X$). Similarly, we extend $Q$ to a quadratic form on $\CC^n$ (without complex conjugation).
Our main object of interest is therefore the set of complex critical points, denoted by
\begin{equation} \label{eq:crit}
    \crit_{X,Q}(u) = \{ x \in \reg{X}(\CC) \mid\forall v\in  \tang{x}{X} : Q(v, x- u) = 0 \}. 
\end{equation}
Geometrically, $x \in \crit_{X,Q}(u)$ if, and only if, $x - u$ is orthogonal w.r.t. $Q$ to the tangent space $\tang{x}{X}$
(e.g., see the normal lines in Figure~\ref{ex:GeneralCircle}). We  denote this relation 
\begin{equation}\label{eq:crit2}
(x - u) \perp_Q \tang{x}{X} \quad :\Longleftrightarrow\quad\forall v\in  \tang{x}{X} : Q(v, x- u) = 0.\end{equation}  

If the quadric $Q$ is nondegenerate, then, for generic $u \in \RR^n$, the sets $\crit_{X,Q}(u)$ have the same finite (and positive) cardinality \cite{draisma2016euclidean}. This invariant is known as the \emph{Euclidean distance degree} (EDD) of~$X$ w.r.t. $Q$, denoted by $\textnormal{EDD}_Q(X)$. 
For all $u \in \CC^n$, we either have that $|\crit_{X,Q}(u)|\leq \textnormal{EDD}_Q(X)$ (with equality when counted with multiplicity) or $|\crit_{X,Q}(u)| = \infty$.
Hence, the EDD provides, for generic $u \in \CC^n$, a certificate for having found all complex critical points.

Moreover, for generic quadrics $Q$, the degrees $\textnormal{EDD}_Q(X)$ are the same.
This invariant is called the \emph{generic Euclidean distance degree} of $X$, which we denote by $\textnormal{gEDD}(X)$. 
All nondegenerate quadrics $Q$ satisfy 
\begin{equation}\textnormal{EDD}_Q(X) \leq \textnormal{gEDD}(X)\end{equation}
due to semicontinuity. The following standard example shows that this relation is in general indeed an inequality.

\begin{exmp}
    For the circle $X=\{(x,y) \mid x^2+y^2=1\}$ and a generic point $u$, the critical points with respect to the standard quadric $Q(x,y)=x^2+y^2$ are exactly the two intersection points between $X$ and the line $l=\{tu  \mid  t\in\RR\}$. However, for a more generic quadric $Q$, such as $Q(x,y)=4x^2+y^2$ used in Figure~\ref{ex:GeneralCircle}, we obtain four critical points.
    \hfill $\diamondsuit$

    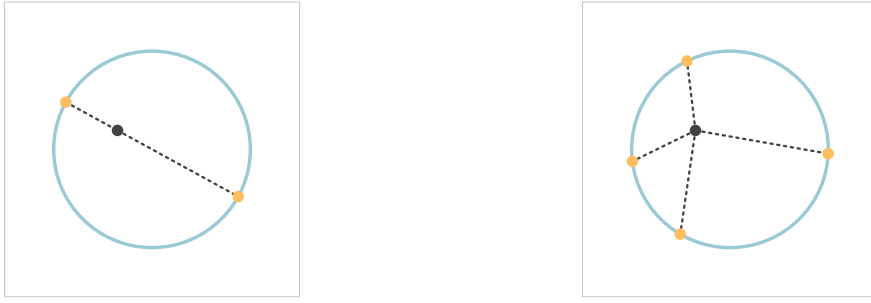
\begin{figure}
    \begin{subfigure}[b]{0.45\textwidth}
    \centering
    \definecolor{ffbdwq}{rgb}{1, 0.74117, 0.376470}
\definecolor{zacaff}{rgb}{0.60392,0.79215, 0.839215}
\definecolor{afard}{rgb}{0.25882, 0.25882, 0.25882}
\resizebox{.65\textwidth}{!}{
\begin{tikzpicture}[line cap=round,line join=round,>=triangle 45,x=1.5cm,y=1.5cm]
\draw [line width=1.5pt,color=zacaff] (0,0) circle (1.5cm);
\draw [line width=1pt,dash pattern=on 1pt off 2pt, color=afard] (-0.8768137851947905,0.48083010106729357)-- (-0.3509899183391754,0.192833619863706);
\draw [line width=1pt,dash pattern=on 1pt off 2pt, color=afard] (-0.3509899183391754,0.192833619863706)-- (0.8769780683804816,-0.48053040234686417);
\begin{scriptsize}
\draw [fill=afard, draw=none] (-0.3509899183391754,0.192833619863706) circle (2.5pt);
\draw [fill=ffbdwq, draw=none] (0.8769780683804816,-0.48053040234686417) circle (2.5pt);
\draw [fill=ffbdwq, draw=none] (-0.8768137851947905,0.48083010106729357) circle (2.5pt);
\end{scriptsize}
\draw  [color={rgb, 255:red, 196; green, 196; blue, 196 }  ,draw opacity=1 ] (-1.5,-1.5) -- (1.5,-1.5) -- (1.5,1.5) -- (-1.5,1.5) -- cycle ;
\end{tikzpicture}
}
    \caption{
    $Q(x,y)=x^2+y^2$ yields
     $\textnormal{EDD}_Q(X)=2$.}
    \end{subfigure}
    \hfill
    \begin{subfigure}[b]{0.45\textwidth}
    \centering
        \definecolor{ffbdwq}{rgb}{1, 0.74117, 0.376470}
\definecolor{zacaff}{rgb}{0.60392,0.79215, 0.839215}
\definecolor{afard}{rgb}{0.25882, 0.25882, 0.25882}
\resizebox{.65\textwidth}{!}{
\begin{tikzpicture}[line cap=round,line join=round,>=triangle 45,x=1.5cm,y=1.5cm]
\draw [line width=1.5pt,color=zacaff] (0,0) circle (1.5cm);
\draw [line width=1pt,dash pattern=on 1pt off 2pt, color=afard] (-0.3509899183391754,0.192833619863706)-- (-0.43676886150645855,0.8995737666352616);
\draw [line width=1pt,dash pattern=on 1pt off 2pt, color=afard] (-0.3509899183391754,0.192833619863706)-- (0.9990415112288157,-0.043772809387154686);
\draw [line width=1pt,dash pattern=on 1pt off 2pt, color=afard] (-0.3509899183391754,0.192833619863706)-- (-0.9925778520430143,-0.12161088616433992);
\draw [line width=1pt,dash pattern=on 1pt off 2pt, color=afard] (-0.3509899183391754,0.192833619863706)-- (-0.5056572982440202,-0.862734430014566);
\begin{scriptsize}
\draw [fill=afard, draw=none] (-0.3509899183391754,0.192833619863706) circle (2.5pt);
\draw [fill=ffbdwq, draw=none] (-0.9925778520430143,-0.12161088616433992) circle (2.5pt);
\draw [fill=ffbdwq, draw=none] (-0.5056572982440202,-0.862734430014566) circle (2.5pt);
\draw [fill=ffbdwq, draw=none] (0.9990415112288157,-0.043772809387154686) circle (2.5pt);
\draw [fill=ffbdwq, draw=none] (-0.43676886150645855,0.8995737666352616) circle (2.5pt);
\end{scriptsize}
\draw  [color={rgb, 255:red, 196; green, 196; blue, 196 }  ,draw opacity=1 ] (-1.5,-1.5) -- (1.5,-1.5) -- (1.5,1.5) -- (-1.5,1.5) -- cycle ;
\end{tikzpicture}
}
        \caption{$Q(x,y)=4x^2+y^2$ yields
     $\textnormal{EDD}_Q(X)=4$.}\label{ex:GeneralCircle}
        \end{subfigure}
    \caption{Critical points (yellow) and normal lines (dashed) on the circle w.r.t. two different quadrics~$Q$.}
    \end{figure}
\end{exmp}

Our main result Theorem~\ref{main_informal} extends the generic Euclidean distance degree of $X$ from nondegenerate quadrics to degenerate ones. 
As mentioned in Section~\ref{main_informal}, 
it does not hold for the $\textnormal{EDD}_Q(X)$ for arbitrary quadrics $Q$. 

\subsection{Machine Learning Perspective}\label{sec:machinepersp}
In this section, we explain how quadratic optimization arises in machine learning. To this end, we focus on polynomial models, meaning that the neuromanifold $X$ consists of polynomial functions~$\RR^{\numm{in}} \to \RR^{\numm{out}}$ of degree at most $D$. In other words, $X$ is contained in the vector space of all multivariate polynomials of degree at most $D$: 
\begin{equation}X\subset (\RR[x_1,\ldots,x_{\numm{in}}]_{\leq D})^{\numm{out}}\end{equation}
(compare Examples \ref{ex:attention} and \ref{exmp:determinantal}).
Given a finite dataset $\mathcal{S} \subseteq \RR^{\numm{in}} \times \RR^{\numm{out}}$, a typical regression problem is to minimize, over $f \in X$, the mean-squared error loss
\begin{equation}\label{eq:losss}
    \mathcal{L}(f) = \sum_{(a,b)\in \mathcal{S}} \Vert f(a)-b \Vert^2. 
\end{equation}
The optimization problem in \eqref{eq:losss} is equivalent to minimizing a (potentially degenerate) distance from some given point in the ambient vector space to $X$.
To see this,
we use the standard trick of turning polynomial regression into a linear one by considering the Veronese embedding $\nu_D$ that sends $(x_1,\ldots,x_{\numm{in}})$ to a tuple of all monomials in $(x_1,\ldots,x_{\numm{in}})$ of degree at most $D$.
Then, we can identify $f$ with its coefficient matrix $W$ whose $i$-th row contains the coefficients of the $i$-th coordinate function such that $f(a) = W \nu_D(a)$. 
Let us denote by $A$ and $B$ the matrices whose columns are the $\nu_D(a)$ and $b$, respectively, where $(a,b)$ runs over the whole dataset $\mathcal{S}$. 
\begin{exmp}
    For a single input $\numm{in}=1$, the Veronese embedding is $\nu_D(x) = (1,x,x^2,\ldots,x^D)$. Therefore, $A$ is a Vandermonde matrix. \hfill $\diamondsuit$
\end{exmp}
With the notation above, the mean-squared error loss \eqref{eq:losss} can be written in the concise form 
\begin{equation}\label{eq:losss_alternative}
    \mathcal{L}(f) = \mathcal{L}(W) = \Vert WA - B\Vert_{\textnormal{Frob}}^2,
\end{equation}
where the norm here is the \emph{Frobenius norm}  $\Vert M\Vert_{\textnormal{Frob}}=\sqrt{\mathrm{tr}(MM^\top)}$.

\begin{prop} \label{prop:MSE} Let $X \subseteq (\RR[x_1,\ldots,x_{\numm{in}}]_{\leq D})^{\numm{out}}$. Then, 
    \begin{equation}\label{eq:lossopt}
        \arg\min_{W \in X} \mathcal{L}(W) = \arg\min_{W \in X} \Vert W - BA^\dagger \Vert^2_{AA^\top},
    \end{equation}
    where 
   $A^\dagger$ is the Moore--Penrose inverse of $A$ and \begin{equation}\Vert M \Vert_{AA^\top}  :=  \sqrt{\textnormal{tr}(M A A^\top  M^\top)} = \| MA \|_{\textnormal{Frob}}.\end{equation}
\end{prop}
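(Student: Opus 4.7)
The plan is to show that the two functions being minimized differ by an additive term that does not depend on $W$, from which the equality of argmins over any subset $X$ follows immediately. The key observation is an orthogonal decomposition in the Frobenius inner product:
\[
WA - B \;=\; (W - BA^\dagger)\,A \;+\; (BA^\dagger A - B),
\]
where the two summands will turn out to be orthogonal. Intuitively, $BA^\dagger A$ is the orthogonal projection of $B$ onto the subspace $\{YA : Y\}$ of matrices sitting in the column span of $A^\top$, and the decomposition above splits $WA-B$ into its component in that subspace and its component in the orthogonal complement.

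To verify orthogonality, I would use only the two Moore--Penrose identities $AA^\dagger A = A$ and $(A^\dagger A)^\top = A^\dagger A$. Transposing with the symmetry identity gives $(BA^\dagger A)^\top = A^\dagger A\, B^\top$, so
\[
\langle (W - BA^\dagger)A,\; BA^\dagger A - B\rangle_{\mathrm{Frob}} \;=\; \operatorname{tr}\!\bigl((W - BA^\dagger)\,A\,(A^\dagger A - I)\,B^\top\bigr),
\]
and $A(A^\dagger A - I) = AA^\dagger A - A = 0$ by the first identity. Pythagoras then yields
\[
\|WA - B\|_{\mathrm{Frob}}^2 \;=\; \|(W - BA^\dagger)A\|_{\mathrm{Frob}}^2 \;+\; \|BA^\dagger A - B\|_{\mathrm{Frob}}^2.
\]
By the definition $\|M\|_{AA^\top} = \|MA\|_{\mathrm{Frob}}$ given in the proposition, the first summand equals $\|W - BA^\dagger\|_{AA^\top}^2$, while the second is a constant independent of $W$. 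Hence $\mathcal{L}(W)$ and $\|W - BA^\dagger\|_{AA^\top}^2$ differ by a quantity independent of $W$, so their argmins over $W \in X$ coincide.

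The proof is essentially a direct pseudoinverse computation, and there is no substantive obstacle; the only care needed is to allow for $A$ not having full row rank, which is precisely why the two defining Moore--Penrose identities (and not just the formula $A^\dagger = (A^\top A)^{-1}A^\top$) must be used. This may be viewed as the matrix-valued analogue of the classical reduction of the vector least-squares problem $\min_x \|Ax - b\|^2$ to the quadratic form $(x - A^\dagger b)^\top A^\top A\,(x - A^\dagger b)$ up to a constant.
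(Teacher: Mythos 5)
Your proof is correct and follows essentially the same route as the paper: both arguments show that $\mathcal{L}(W)=\Vert WA-B\Vert_{\textnormal{Frob}}^2$ and $\Vert W-BA^\dagger\Vert_{AA^\top}^2$ differ by a $W$-independent constant, using only the defining Moore--Penrose identities (the paper expands the trace directly via $A^\dagger AA^\top=A^\top$, while you package the same cancellation as an orthogonal decomposition plus Pythagoras). The Pythagorean phrasing is a cosmetic difference, not a different method.
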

\begin{proof} This follows from the property $A^\dagger AA^\top = A^\top$ of the Moore--Penrose inverse. Writing `const.' for terms that do not depend on $W$, we see that
    \begin{align*}
        \Vert W - BA^\dagger \Vert^2_{AA^\top}
        &= \textnormal{tr}(WAA^\top W^\top) - 2 \,\textnormal{tr}(B A^\dagger AA^\top W^\top) + \textnormal{const.} \\
        &= \textnormal{tr}(WAA^\top W^\top) - 2 \,\textnormal{tr}(B A^\top W^\top) + \textnormal{const.}
        = \Vert WA-B \Vert_{\textnormal{Frob}}^2+ \textnormal{const.}
    \end{align*}
    Now the claim follows using \eqref{eq:losss_alternative}.
\end{proof}

Proposition~\ref{prop:MSE} says that minimizing the mean-squared error loss over a polynomial model $X$ amounts to finding the closest point on $X$ to a given point $u: = BA^\dagger$, where closeness is measured by the Frobenius-type seminorm induced by the covariance $AA^\top$ of the input data. The quadratic form over $(\RR[x_1,\ldots,x_{\numm{in}}]_{\leq D})^{\numm{out}}$ corresponding to this seminorm is the Kronecker product 
\begin{equation}\label{special_Q}Q:=I_{\numm{out}} \otimes AA^\top.\end{equation}
Both $AA^\top$ and $Q=I_{\numm{out}} \otimes AA^\top$ are rank-deficient whenever the dataset size $ |\mathcal{S}| $ is less than the dimension of $\RR[x_1,\ldots,x_{\numm{in}}]_{\leq D}$. As mentioned in Section \ref{sec:intro}, this degenerate scenario is common in modern large-scale machine learning models, and is referred to as overparametrization \cite{belkin2021fit}. This is the core motivation behind our work.

We note that, even in the setting of noisy and big data $|\mathcal{S}| \geq \dim \RR[x_1,\ldots,x_{\numm{in}}]_{\leq D}$, the quadric in \eqref{special_Q} is not generic, but of a special form: 1) it has tensor structure, and 2) the matrix $AA^\top$ is structured, as seen in Example~\ref{exampleAAt} below. This means that our main result (Theorem \ref{main_informal}), which assumes a generic quadric, does not immediately hold in this setting. 
\begin{exmp}\label{exampleAAt}
    Consider quadratic polynomial functions in two variables, i.e., $\numm{in}=2$ and $D=2$. 
    Choosing the monomial ordering 
    $\nu_2(x_1,x_2) = (1,x_1,x_2,x_1^2,x_1x_2,x_2^2)$, we obtain 
    \begin{align*}
        AA^\top = \sum_{((a_1,a_2),b) \in \mathcal{S}} \begin{pmatrix}
            1 & a_1 & a_2 & a_1^2 & a_1a_2 & a_2^2 \\
            a_1 & a_1^2 & a_1a_2 & a_1^3 & a_1^2a_2 & a_1a_2^2 \\
            a_2 & a_1a_2 & a_2^2 & a_1^2a_2 & a_1a_2^2 & a_2^3 \\
            a_1^2 & a_1^3 & a_1^2a_2 & a_1^4 & a_1^3a_2 & a_1^2a_2^2 \\
            a_1a_2 & a_1^2a_2 & a_1a_2^2 & a_1^3a_2 & a_1^2a_2^2 & a_1a_2^3 \\
            a_2^2 & a_1a_2^2 & a_2^3 & a_1^2a_2^2 & a_1a_2^3 & a_2^4 
        \end{pmatrix}.
    \end{align*}
    In particular, even when $|\mathcal{S}| >> 6$, the matrix $AA^\top$ is \emph{not} an arbitrary symmetric positive definite matrix, as it has repeated entries. 
    \hfill $\diamondsuit$
\end{exmp}
\begin{quest}
    What is the semialgebraic description of the set of all matrices $AA^\top$ for varying data~$\mathcal{S}$ and fixed $\numm{in}, D, |\mathcal{S}|$?
\end{quest}
Despite the special structure of the quadric $Q$ in \eqref{special_Q}, we can still apply Theorem \ref{main_informal} using the following trick, provided the variety $X$ that we minimize the loss over is general. Consider a  general affine linear transformation $\varphi(x) = Mx+\beta$, and define the quadric $Q' := M^{-\top} Q M^{-1}$ and the variety $X' := \varphi(X)=\{\varphi(x)\mid x\in X\}$. Since~$\varphi$ is affine linear, $\tang{x'}{X'} = M\cdot \tang{x}{X},$ where $x'=\varphi(x)$. Therefore, the criticality condition  \eqref{eq:crit2} for $X$ and $X'$ are related by 
\begin{equation}\forall v\in  \tang{x}{X} : Q(v, x- u) = 0 \quad \Longleftrightarrow \quad \forall v'\in  \tang{x'}{X'} : Q'(v', x'- u') = 0,\end{equation}
where $u\in\mathbb R^n$  and $u'=\varphi(u)$. This means that $x\in X$ is a critical point for $u$ if, and only if, $x'$ is a critical point for $u'$. In particular, the number of critical points of $X$ w.r.t.\ to $u$ and of $X'$ w.r.t.~$u'$ is the same. However, since $X$ is assumed to be general, $X'$ is general and therefore $Q'$ and $u'$ can be assumed to be general for $X'$, although we did not define them independently. Consequently, Theorem \ref{main_informal} applies to $X',Q'$ and~$u'$. This gives the number of critical points also for the original setting with $X,Q$ and $u$. The reason why this works is that given a quadric, a general variety always has generic EDD for that quadric. The article \cite{MAXIM2020102101} calls, in the nondegenerate case, the difference between generic EDD and actual EDD the \emph{ED defect}. It would be interesting to study the ED defect also in the setting of degenerate quadrics.

We close this section with discussing the point $u = BA^\dagger$ to which we aim to find a closest point on~$X$. Note that, while the quadric $Q$ in \eqref{special_Q} only depends on the input data in $\mathcal{S}$, the point $u$ depends on both the input and output data. The following result says that, for sufficiently noisy data $\mathcal{S}$, we can assume the point $u$ to be generic, at least after projecting away the kernel of the quadric $Q$ (which is also the only genericity needed for Theorem~\ref{main_informal}).

\begin{prop} \label{prop:largeData}
For fixed dataset size $s$, consider the set of all possible data points $u$, that is,
\begin{equation}U_s := \{ BA^\dagger \mid \mathcal{S} \subseteq \RR^{\numm{in}} \times \RR^{\numm{out}}, |\mathcal{S}| = s  \}.\end{equation}
\begin{enumerate}
    \item If $s \geq \dim \RR[x_1,\ldots,x_{\numm{in}}]_{\leq D}$, then $U_s$ equals the ambient space $(\RR[x_1,\ldots,x_{\numm{in}}]_{\leq D})^{\numm{out}}$.
    \item If $s < \dim \RR[x_1,\ldots,x_{\numm{in}}]_{\leq D}$, then $U_s$ is  the orthogonal complement (w.r.t.\ to the standard dot product) of the kernel of the matrix $Q$. 
\end{enumerate}
\end{prop}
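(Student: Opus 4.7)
The cleanest way to parse the statement is to note that $Q$ depends only on the inputs $(a_i)_{i=1}^s$ of $\mathcal{S}$, so both parts should be read as: fix the input data (hence $A$ and $Q$), then let the outputs $(b_i)$ vary. Under the identification $(\RR[x_1,\ldots,x_{\numm{in}}]_{\leq D})^{\numm{out}}\cong\RR^{\numm{out}\times N}$, with $N:=\dim \RR[x_1,\ldots,x_{\numm{in}}]_{\leq D}$, the bilinear form $Q=I_{\numm{out}}\otimes AA^\top$ becomes $Q(W,W')=\textnormal{tr}(WAA^\top W'^\top)=\langle WA,W'A\rangle_{\textnormal{Frob}}$. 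Hence $\ker Q=\{W:WA=0\}$ consists of matrices whose rows lie in $(\textnormal{colspan}(A))^\perp\subset\RR^N$, and $(\ker Q)^\perp=\RR^{\numm{out}}\otimes\textnormal{colspan}(A)$ consists of matrices whose every row lies in $\textnormal{colspan}(A)$.

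Both inclusions then reduce to one-line applications of standard Moore--Penrose identities. For $U_s\subseteq(\ker Q)^\perp$, I would combine $(A^\dagger)^\top=(A^\top)^\dagger$ with the identity $\textnormal{Im}(M^\dagger)=(\ker M)^\perp=\textnormal{Im}(M^\top)$ to conclude that $\textnormal{rowspan}(A^\dagger)=\textnormal{colspan}(A)$; then every row of $BA^\dagger$ lies in $\textnormal{colspan}(A)$, so $BA^\dagger\in(\ker Q)^\perp$ for any $B$, with no assumption on the data. For the reverse inclusion, given $u\in(\ker Q)^\perp$ I would set $B:=uA$: since $AA^\dagger$ is the orthogonal projector onto $\textnormal{colspan}(A)$ and each row of $u$ already sits in that subspace, the projection fixes $u$ row by row, giving $BA^\dagger=uAA^\dagger=u$.

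Part~(1) then falls out as a degenerate case. When $s\geq N$, for generic input points $a_1,\ldots,a_s$ the Veronese images span $\RR^N$ (the Veronese variety is nondegenerate in $\RR^N$), so $A$ has full row rank $N$, $\textnormal{colspan}(A)=\RR^N$, $\ker Q=0$, and $(\ker Q)^\perp$ exhausts the ambient space; in this case one additionally has $AA^\dagger=I_N$, so the construction $B=uA$ literally inverts the map $B\mapsto BA^\dagger$. I do not anticipate a real obstacle; the only genuinely subtle point is interpretational---``$U_s$'' and ``$(\ker Q)^\perp$'' must be understood with the same fixed input data, even though $U_s$ is written as varying all of $\mathcal{S}$.
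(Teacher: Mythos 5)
Your proposal is correct and follows essentially the same route as the paper's proof: both parts come down to the fact that, for fixed input data, the rows of $BA^\dagger$ range exactly over the column space of $A$ (the paper phrases this as $\rspan(A^\dagger)=\cspan(A)$, you additionally exhibit the explicit preimage $B=uA$ via the projector $AA^\dagger$), combined with the nondegeneracy of the Veronese variety to get full rank of $A$ when $s\geq\dim\RR[x_1,\ldots,x_{\numm{in}}]_{\leq D}$. Your interpretational remark—that $Q$ and $U_s$ must be read with the same fixed inputs while only $B$ varies—matches what the paper's own proof implicitly does.
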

\begin{proof}
    The linear span of the Veronese variety $\mathrm{im}(\nu_D)$ is its whole ambient space of dimension equal to $\dim \RR[x_1,\ldots,x_{\numm{in}}]_{\leq D}$.
    Hence, for generic $\mathcal{S}$ with size at least $ \dim \RR[x_1,\ldots,x_{\numm{in}}]_{\leq D}$, the matrix $A$ is of full column rank, and so $A^\dagger$ is of full row rank. Therefore, for varying $B$, we have that $BA^\dagger$ is an arbitrary matrix.

    Now, we assume that  $|\mathcal{S}|$ is less  than $ \dim \RR[x_1,\ldots,x_{\numm{in}}]_{\leq D}$.
    In this case, for varying $B$, 
    the $\numm{out}$ many rows of the matrix $BA^\dagger$ are arbitrary vectors in the row space of $A^\dagger$, which equals the column space of $A$.
    The kernel of the quadric $Q=I_{\numm{out}} \otimes AA^\top$ is $\RR^{\numm{out}}\otimes \ker(A^\top) \cong \ker(A^\top)^{\numm{out}}$, so its orthogonal complement is the $\numm{out}$-fold direct sum of the column space of $A$. Therefore, $BA^\dagger$ is an arbitrary point in the orthogonal complement of $\ker Q$.
\end{proof}

\bigskip
\section{General Varieties}
In this section, we prove the main results of this work. We first show Theorem~\ref{main_informal}, and then count the occurring finite sets of critical points in Section~\ref{ssec:projective}.

Recall that we wish to describe the critical point set $\crit_{X,Q}(u)$ in \eqref{eq:crit}, and from \eqref{eq:crit2} that the criticality condition is $(x-u)\perp_Q \tang{x}{X}$. Analogously to the case of positive definite quadrics studied in \cite{draisma2016euclidean}, a central object in our proofs is the \emph{ED correspondence} 
\begin{equation}\label{def:EXQ}
    E_{X,Q} := 
    \{(x,u) \in \reg{X}(\CC) \times \mathbb{C}^n  \mid (x- u) \perp_Q \tang{x}{X}  \}.
\end{equation} 
By construction, the set of critical points  is the fiber of $u$ under the projection $E_{X,Q} \rightarrow \CC^n$ onto the second factor.

The main difference when compared to the nondegenerate case is that taking orthogonals w.r.t.\ a degenerate quadric $Q$ is more subtle, and exhibits a non-uniform behavior.
Denoting again by $K$ the kernel of the bilinear form associated with $Q$,
 for a linear subspace $L \subseteq \CC^n$, it holds that 
 \begin{equation}\dim  L^{\perp_Q} = n - \dim  L + \dim  L \cap K.\end{equation} This means that the projection $E_{X,Q} \rightarrow \reg{X}$ onto the first factor is not a vector bundle over $\reg{X}$; its fibers are affine subspaces of $\CC^n$ whose dimension depends on how the tangent space $\tang{x}{X}$ intersects~$K$. To partition the ED correspondence accordingly, we define the following subsets of~$X$. Recall from \eqref{def:d} and \eqref{def:k} that we denote $d = \dim  X$ and $k = \dim  K$. Then, we define
 \begin{equation} \label{eq:exactPolarLocus}
    P_{i,K}(X) := \{ x \in \reg{X}(\CC)  \mid\dim  \tang{x}{X} \cap K = i  \},\quad 0 \leq i \leq \textnormal{min}\{ d, k  \}.
\end{equation} 
Setting 
\begin{equation}E_{i,Q}(X) := 
\{(x,u) \in E_{X,Q} \mid x \in P_{i,K}(X)\},\end{equation}
we obtain the partition 
 \begin{equation} \label{eq:decomp_E}E_{X,Q} = \bigcup_i E_{i,Q}(X).
 \end{equation}

The key to proving Theorem~\ref{main_informal} are the following two lemmata. The first lemma implies that each component $E_{i,Q}(X)$  in \eqref{eq:decomp_E} contributes to the critical points of some generic data $u$ only if its dimension is at least $n$. The second lemma then computes the dimension of $E_{i,Q}(X)$ (and also of~$P_{i,K}(X)$). Therefore, these lemmata show us which $E_{i,Q}(X)$ must be studied for determining the number of critical points.

\begin{lem}
    \label{lem:relevantComp}
    If $\dim  E_{i,Q}(X)<n$, then, for generic $u \in \CC^n$,  $\crit_{X,Q}(u) \cap P_{i,K}(X) = \emptyset$.
\end{lem}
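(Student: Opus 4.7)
The plan is to view $\crit_{X,Q}(u) \cap P_{i,K}(X)$ as a fiber of the projection from $E_{i,Q}(X)$ to $\CC^n$ and then invoke a standard dimension argument to conclude that, generically in $u$, this fiber is empty.

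More concretely, I would first observe that, by the very definitions in \eqref{def:EXQ} and \eqref{eq:decomp_E}, a point $x \in P_{i,K}(X)$ lies in $\crit_{X,Q}(u)$ if and only if $(x,u) \in E_{i,Q}(X)$. Hence, denoting by $\pi_i \colon E_{i,Q}(X) \to \CC^n$ the restriction to $E_{i,Q}(X)$ of the projection onto the second factor, we have the identification
\begin{equation}
    \crit_{X,Q}(u) \cap P_{i,K}(X) \;=\; \pi_i^{-1}(u).
\end{equation}
Thus the statement reduces to showing that generic $u$ is not in the image of $\pi_i$.

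The second step would be to handle the fact that $P_{i,K}(X)$, defined by the exact rank condition $\dim \tang{x}{X} \cap K = i$, is in general only locally closed (a closed rank-at-least-$i$ condition intersected with an open rank-at-most-$i$ condition), so $E_{i,Q}(X)$ is constructible rather than closed. Chevalley's theorem then guarantees that $\pi_i(E_{i,Q}(X))$ is itself a constructible subset of $\CC^n$, and standard dimension theory bounds its dimension by $\dim E_{i,Q}(X) < n$. Consequently, the Zariski closure $\overline{\pi_i(E_{i,Q}(X))}$ is a proper closed subvariety of $\CC^n$.

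The conclusion is then immediate: for any $u$ in the complement of this proper subvariety — a generic condition — the fiber $\pi_i^{-1}(u)$ is empty, so $\crit_{X,Q}(u) \cap P_{i,K}(X) = \emptyset$. I do not expect a genuine obstacle here; the only mild subtlety is the constructibility issue noted above, which is resolved by applying Chevalley's theorem instead of the morphism-of-projective-varieties version of the fiber dimension theorem.
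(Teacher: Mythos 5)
Your proposal is correct and follows essentially the same route as the paper: both arguments identify $\crit_{X,Q}(u)\cap P_{i,K}(X)$ with the fiber of $u$ under the projection $E_{i,Q}(X)\to\CC^n$ and conclude by a dimension count that a generic $u$ misses the image. Your explicit appeal to Chevalley's theorem to handle the constructibility of $E_{i,Q}(X)$ is a careful spelling-out of a detail the paper leaves implicit, but it is not a different argument.
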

\begin{proof}
    As explained above, the critical points form the fiber of $u$ under the projection $E_{X,Q} \rightarrow \CC^n$ onto the second factor.
    Hence, if $\dim  E_{i,Q}(X)<n$, then the fiber over a generic $u \in \CC^n$ misses the component $E_{i,Q}(X)$.
\end{proof}

To compute the dimensions of the components $E_{i,Q}(X)$, we make use of dimension counts in Grassmannians. 
Given a finite-dimensional complex vector space $V$ and $0 \leq m \leq \dim  V$, we denote by $\grass(m, V)$  the \emph{Grassmannian} of $m$-dimensional linear subspaces of $V$. Moreover, we consider the \emph{Gauss map} 
\begin{equation}\label{def_gaussmap}\tau \colon X \dashrightarrow \grass(d, \CC^n), x \mapsto \tang{x}{X};
\end{equation}
i.e., the rational map induced by the tangent bundle of~$\reg{X}$. 

For the next lemma recall again that  $d = \dim  X$ and $k = \dim  K$ (see  \eqref{def:d} and \eqref{def:k}). 
\begin{lem}
    \label{lem:dimensioPolar}
    Let $Q$ be a generic quadric with $k$-dimensional kernel. 
    If both $i \geq d+k-n$ and $\dim  \tau(X) \geq i(i+n-d-k)$, we have 
    \begin{align}\label{eq:dimpi}
        \dim  P_{i,K}(X) &= d - i(i+n-d-k) \quad \text{ and}\\
    \label{eq:dimei}
        \dim  E_{i,Q}(X) &= n-i(i+n-d-k-1).
    \end{align}
    In all other cases, we have $P_{i,K}(X) = \emptyset$ and $ E_{i,Q}(X)=\emptyset$.
\end{lem}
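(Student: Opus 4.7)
The plan is to interpret $P_{i,K}(X)$ as the preimage under the Gauss map $\tau$ of a Schubert-type stratum in the Grassmannian, compute its dimension via Kleiman's transversality theorem, and then deduce the dimension of $E_{i,Q}(X)$ from an affine-bundle structure over $P_{i,K}(X)$.

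First, consider the locally closed stratum
\[
\Sigma_i(K) := \{ L \in \grass(d, \CC^n) \mid \dim L \cap K = i \}.
\]
A direct parameter count, stratifying by $V := L \cap K$ of dimension $i$ inside $K$ and then choosing $L/V \subseteq \CC^n/V$ of dimension $d-i$ avoiding the $(k-i)$-dimensional image of $K$, shows that $\Sigma_i(K)$ is non-empty iff $\max\{0, d+k-n\} \leq i \leq \min\{d,k\}$, with codimension $i(i+n-d-k)$ in $\grass(d, \CC^n)$ in that case. When $i < d+k-n$, any two subspaces of the relevant dimensions must meet in dimension strictly greater than $i$, so $\Sigma_i(K)$ and hence $P_{i,K}(X) = \tau^{-1}(\Sigma_i(K))$ are empty. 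Otherwise, since $GL_n$ acts transitively on $\grass(k, \CC^n)$, a generic quadric $Q$ of kernel-dimension $k$ produces a generic $K$; applying Kleiman's transversality to $\tau: X \dashrightarrow \grass(d, \CC^n)$ and $\Sigma_i(K)$ then yields that $\tau^{-1}(\Sigma_i(K))$ is either empty (precisely when $\dim \tau(X) < i(i+n-d-k)$, i.e., the image has dimension strictly less than the codimension of the stratum) or of pure codimension $i(i+n-d-k)$ in $X$. This establishes \eqref{eq:dimpi} and the vanishing in the excluded cases.

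For the second formula, project $E_{i,Q}(X) \to P_{i,K}(X)$ onto the first factor: the fiber over $x \in P_{i,K}(X)$ is $\{x\} \times (x + \tang{x}{X}^{\perp_Q})$, an affine space of constant dimension $n - d + i$ by the identity $\dim L^{\perp_Q} = n - \dim L + \dim L \cap K$ recalled before the lemma. Summing base and fiber dimensions gives
\[
\dim E_{i,Q}(X) = \bigl(d - i(i+n-d-k)\bigr) + (n - d + i) = n - i(i+n-d-k-1),
\]
matching \eqref{eq:dimei}; emptiness of $E_{i,Q}(X)$ in the excluded cases is immediate from that of $P_{i,K}(X)$. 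The main delicacy lies in the application of Kleiman's transversality: one must verify that genericity among quadrics of fixed kernel rank really places $K$ in general position inside $\grass(k, \CC^n)$, which is exactly what the transitive $GL_n$-action ensures, and that working with the Gauss map from $X$ (whose fibers over $\tau(X)$ may be positive-dimensional) rather than from $\tau(X)$ itself is still compatible with the codimension claim, which is standard in characteristic zero.
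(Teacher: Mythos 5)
Your proof is correct and follows essentially the paper's strategy: the same Grassmannian stratum (your $\Sigma_i(K)$ is the paper's $Z_{i,K}$), the same codimension count $i(i+n-d-k)$, Kleiman's transversality for a generic kernel $K$ obtained from the generic quadric, and the same fiber dimension $n-d+i$ to pass from $P_{i,K}(X)$ to $E_{i,Q}(X)$. The one point where you genuinely diverge is the treatment of the positive-dimensional fibers of the Gauss map. The paper applies Kleiman in its set-intersection form to the image $\tau(X)$, getting $\dim\bigl(\tau(X)\cap Z_{i,K}\bigr)=\dim\tau(X)-i(i+n-d-k)$, and then needs a second Kleiman application to the locus $\Delta\subset\tau(X)$ of tangent spaces whose $\tau$-fibers are larger than generic, in order to conclude that the generic fiber dimension over the intersection is $d-\dim\tau(X)$ and hence $\dim P_{i,K}(X)=d-i(i+n-d-k)$. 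You instead invoke Kleiman's theorem in its morphism form, applied directly to $\tau\colon \reg{X}(\CC)\dashrightarrow \grass(d,\CC^n)$, which gives the pure codimension of the preimage of a generic translate in one stroke; this is legitimate (it is the form in which Kleiman stated the theorem for maps to a homogeneous space) and it absorbs exactly the bookkeeping the paper does with $\Delta$ — though your closing remark that this is ``standard in characteristic zero'' is precisely the step that deserves the explicit citation, since it is the only nontrivial content beyond the parameter counts. One soft spot you share with the paper: Kleiman only yields ``empty or of the expected dimension,'' so the assertion that $\dim\tau(X)\geq i(i+n-d-k)$ forces non-emptiness (needed for the stated equalities of dimensions) is taken for granted in both arguments; in a Grassmannian, complementary dimensions alone do not guarantee that generic translates meet, so strictly speaking this requires knowing that the class of $\tau(X)$ pairs nontrivially with the Schubert class of $\overline{Z_{i,K}}$. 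Relative to the paper's own proof, however, this is not a gap in your proposal.
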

\begin{proof}
For all $x \in \reg{X}$, we have that $\dim  \tang{x}{X} \cap K \geq d+k-n$. Thus, $P_{i,K}(X) = \emptyset$ whenever $i < d+k-n$.
From now on, we fix $i \geq d+k-n$.
Consider the subspace of the Grassmannian 
\begin{equation} 
        Z_{i,K} := \{ T \in \grass(d,\CC^n) \mid \dim  T \cap K = i \}.
  \end{equation}
Note that $P_{i,K}(X) = \tau^{-1}(Z_{i,K})$. 
We compute the dimension of $P_{i,K}(X)$ via the intersection of $Z_{i,K}$ with the tangent bundle $\tau(X)$, as follows.

We begin by computing the dimension of $Z_{i,K}$. Its Zariski closure is  
\begin{equation}
\begin{aligned}
              \{ T \in \grass(d,\CC^n) \mid \dim  T \cap K \geq i \} 
            &= \displaystyle \bigcup_{L \in \grass(i, K)} \underbrace{\left\{  T \in \grass(d,\CC^n) \mid L \subseteq T   \right\}}_{\simeq  \grass(d- i, \CC^{n - i})}. 
\end{aligned}
\end{equation}
Therefore, $\dim  Z_{i,K} = \dim   \grass(i, K) + \dim   \grass(d- i, \CC^{n - i}) = i(k-i) + (d-i)(n-d)$. As $K$ varies, $Z_{i,K}$ gets translated via the action by $\textnormal{GL}(\CC^n)$ on $\grass(d, \CC^n)$. Hence, by Kleiman's transversality theorem \cite{kleiman1974transversality}, for generic $K$, we have that the intersection of $Z_{i,K}$ with $\tau(X)$ has the expected dimension: 
\begin{equation} \label{eq:dimTauZ}
\dim  \tau(X) \cap Z_{i,K} = \dim  Z_{i,K} + \dim  \tau(X) - \dim  \grass(d, \CC^n) =  \dim  \tau(X) - i(i+n-d-k).
\end{equation}
This number being negative means that the intersection is empty, in which case also $P_{i,K}(X)$ is empty. From now on, we assume that \eqref{eq:dimTauZ} is non-negative so that the intersection is not empty. 
To compute
the dimension of  $P_{i,K}(X)$, we consider the linear spaces that are tangent to $X$ at more points than expected:
\begin{equation} 
    \Delta := \{ T \in \tau(X)  \mid  \dim  \tau^{-1}(T) > d - \dim  \tau(X) \}.
\end{equation}
The space $\Delta$ is a lower-dimensional Zariski closed subset of $\tau(X)$.
Hence, again by Kleiman's transversality, we have  $\dim  \Delta \cap Z_{i,K} < \dim  \tau(X) \cap Z_{i,K}$ for generic $K$. 
The latter means that a generic linear space in the intersection $\tau(X) \cap Z_{i,K}$ is tangent to $X$ at a sublocus of dimension $d-\dim  \tau(X)$, showing that 
$\dim  P_{i,K}(X) = \dim  (\tau(X) \cap Z_ {i,K}) + (d - \dim  \tau(X))$.
Together with~\eqref{eq:dimTauZ}, this implies  \eqref{eq:dimpi}. 

Lastly, for all $x \in P_{i,K}(X)$, we have that $\dim   \tang{x}{X}^{\perp_Q} = n - d + i$. This implies that we have $ \dim  E_{i,Q}(X)  =   \dim  P_{i,K}(X) + n - d + i$ and with this \eqref{eq:dimei} follows. 
\end{proof}

We now prove Theorem~\ref{main_informal} by investigating the components $E_i(X,Q)$ of dimension at least $n$ and their behavior under the  orthogonal projection $\pi: \RR^n \to K^\perp$ onto the orthogonal complement of~$K$.
We treat the  two cases in Theorem~\ref{main_informal} separately.

\subsection{Mildly degenerate quadrics ($k+d < n$)}
\label{sec:mildlyDegenerate}

Throughout this section, we assume that $k+d < n $, meaning that $ \dim K + \dim X < n$.

We denote the Zariski closure of the projection of $X$ by 
\begin{equation}\label{def:Y}Y := \textnormal{cl}\,\pi(X).\end{equation}
In the setting of this section, $Y$ is a proper subvariety of~$K^\perp$. Moreover, for generic $K$, we have that $\tang{x}{X} \cap K = \{ 0 \}$ for generic $x \in \reg{X}$. In other words, $P_{0,K}(X)$ is Zariski dense in $X$. 
In fact, none of the other loci $P_{i,K}(X)$ for $i>0$ is expected to contain critical points, which follows from Lemma~\ref{lem:relevantComp} in combination with the next statement.

\begin{lem}\label{lem:dimEmild}
For a generic quadric $Q$ with $k$-dimensional kernel, it holds that:
\begin{equation}\label{eq:case1}
    \dim  E_{i,Q}(X) \; \begin{cases}
    = n & \textnormal{ if } i=0, \\
    < n & \textnormal{ if } i > 0.
\end{cases}
\end{equation}
\end{lem}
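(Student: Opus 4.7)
The plan is to deduce this lemma directly from Lemma~\ref{lem:dimensioPolar} by examining the formula
\begin{equation*}
\dim E_{i,Q}(X) = n - i(i+n-d-k-1)
\end{equation*}
under the standing assumption $k+d<n$, and separately track the edge case when that formula does not apply (in which case $E_{i,Q}(X)$ is empty and the inequality $<n$ holds trivially).

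First, observe that since $k+d<n$, the lower bound $i \geq d+k-n$ from Lemma~\ref{lem:dimensioPolar} is automatic for every $i \geq 0$. So either the auxiliary hypothesis $\dim\tau(X)\geq i(i+n-d-k)$ is satisfied, and then the stated dimension formula applies, or it fails, in which case $E_{i,Q}(X)=\emptyset$ and there is nothing to prove.

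Now I plug in $i=0$: the formula gives $\dim E_{0,Q}(X) = n - 0 = n$. Since $P_{0,K}(X)$ is non-empty (as noted in the paragraph before the lemma, the transversality $\tang{x}{X}\cap K = \{0\}$ holds generically on $X$ when $k+d<n$), the hypothesis $\dim\tau(X) \geq 0$ is automatic, so this case is immediate.

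For $i\geq 1$, the key observation is the inequality
\begin{equation*}
i+n-d-k-1 \;\geq\; 1+n-d-k-1 \;=\; n-d-k \;>\;0,
\end{equation*}
where the last strict inequality uses precisely the hypothesis $k+d<n$. Hence $i(i+n-d-k-1) \geq n-d-k \geq 1$, giving $\dim E_{i,Q}(X) \leq n-1 < n$. The argument is essentially a one-line arithmetic check; I do not foresee a real obstacle, since all the geometric content (genericity of the kernel, Kleiman transversality, the structure of the tangent bundle intersected with $K$) has already been absorbed into Lemma~\ref{lem:dimensioPolar}. Combining with Lemma~\ref{lem:relevantComp}, this immediately implies that only the component $E_{0,Q}(X)$ contributes to the critical locus over a generic $u$, which is the geometric content needed for Case~1 of Theorem~\ref{main_informal}.
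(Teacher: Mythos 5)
Your proposal is correct and follows exactly the paper's route: the paper's proof is the one-line observation that since $n-d-k>0$, the claim follows from Lemma~\ref{lem:dimensioPolar}, and your argument simply spells out the arithmetic (the $i=0$ substitution, the bound $i(i+n-d-k-1)\geq n-d-k\geq 1$ for $i\geq 1$, and the trivial empty case) that this one-liner leaves implicit.
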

\begin{proof}
    Since $n-d-k > 0$, \eqref{eq:case1} follows from Lemma~\ref{lem:dimensioPolar}.
\end{proof}

Now, we finally prove the first case of our Main Theorem \ref{main_informal}. 
For that, recall that $\pi$ turns the degenerate quadric $Q$ into a nondegenerate quadratic form $\pi(Q)$ on $K^\perp$. 
\begin{thm}\label{prop:generalXsmallK}
As in \eqref{def:Y}
denote $Y= \textnormal{cl}\,\pi(X)$.
For a generic quadric $Q$  with a $k$-dimensional kernel and a generic $u \in \RR^n$, the projection $\pi$ induces a bijection between the critical point sets
\begin{equation}
    \crit_{X,Q}(u) \overset{1:1}{\longleftrightarrow}
    \crit_{Y,\pi(Q)}(\pi(u)).
\end{equation}
In particular, $|\crit_{X,Q}(u)|= \textnormal{gEDD} (Y)$. 
\end{thm}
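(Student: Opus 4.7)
The plan is to factor the degenerate quadric through the projection. Since $K=\ker Q$, the bilinear form satisfies $Q(v,w)=\pi(Q)(\pi(v),\pi(w))$ for all $v,w\in\CC^n$. For any $x\in P_{0,K}(X)$, the condition $\tang{x}{X}\cap K=\{0\}$ makes the differential $d\pi$ restrict to an isomorphism $\tang{x}{X}\to\tang{\pi(x)}{Y}$, so in particular $\pi(\tang{x}{X})=\tang{\pi(x)}{Y}$. Substituting both observations into the criticality condition \eqref{eq:crit2} shows that $(x-u)\perp_Q\tang{x}{X}$ is equivalent to $(\pi(x)-\pi(u))\perp_{\pi(Q)}\tang{\pi(x)}{Y}$, i.e., to $\pi(x)$ being critical for the nondegenerate problem $(Y,\pi(Q))$ at $\pi(u)$.

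Next, I would establish that, for generic $K$ with $\dim K=k<n-d$, the restriction $\pi|_X\colon X\dashrightarrow Y$ is birational onto its image. The argument is classical: for a fixed $x\in X$ the set $X-x$ is a $d$-dimensional subvariety of $\CC^n$, and since $k+d<n$ a generic $k$-dimensional subspace $K$ meets $X-x$ only at the origin, so the fiber $X\cap(x+K)$ reduces to $\{x\}$ for generic $x$. Hence $\pi|_X$ is generically one-to-one, giving a dense Zariski-open $U\subseteq X$ on which $\pi|_U$ is an isomorphism onto a dense open $V\subseteq Y$. The differential computation in the previous paragraph forces $U\subseteq P_{0,K}(X)$. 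Moreover, since $Q$ is generic with kernel $K$, the induced form $\pi(Q)$ is a generic nondegenerate quadric on $K^\perp$.

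To finish, I combine these ingredients. By Lemma~\ref{lem:relevantComp} and Lemma~\ref{lem:dimEmild}, for generic $u\in\CC^n$ the set $\crit_{X,Q}(u)$ lies in $P_{0,K}(X)$. A further genericity step -- observing that $E_{0,Q}(X)$ restricted to $P_{0,K}(X)\setminus U$ has dimension at most $(d-1)+(n-d)=n-1<n$, since $P_{0,K}(X)\setminus U$ is a proper closed subvariety of the $d$-dimensional $P_{0,K}(X)$ and each fiber of $E_{0,Q}(X)\to P_{0,K}(X)$ is an $(n-d)$-dimensional affine space -- puts $\crit_{X,Q}(u)\subseteq U$ for generic $u$, and symmetrically $\crit_{Y,\pi(Q)}(\pi(u))\subseteq V$. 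Then the first paragraph shows $\pi$ sends $\crit_{X,Q}(u)$ into $\crit_{Y,\pi(Q)}(\pi(u))$; this is injective because $\pi|_U$ is, and surjective because every $y\in\crit_{Y,\pi(Q)}(\pi(u))\subseteq V$ lifts uniquely via $(\pi|_U)^{-1}$ to a point $x\in U\subseteq P_{0,K}(X)$, which is critical for $(X,Q)$ at $u$ by the equivalence of conditions. The identity $|\crit_{Y,\pi(Q)}(\pi(u))|=\textnormal{gEDD}(Y)$ then follows from the nondegenerate EDD theory of \cite{draisma2016euclidean}.

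The step I expect to be most delicate is the birationality of $\pi|_X$ at the precise threshold $k+d<n$: one must argue that the generic fiber is a single point rather than merely zero-dimensional, and simultaneously track that the genericity of $Q$ passes to both the center $K$ (giving birationality) and the induced quadric $\pi(Q)$ (giving the $\textnormal{gEDD}$ count).
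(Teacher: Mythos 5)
Your proposal is correct and follows essentially the same route as the paper's proof: reduce to $P_{0,K}(X)$ via Lemmas~\ref{lem:relevantComp} and~\ref{lem:dimEmild}, use that the projection with generic center $K$ of dimension $k<n-d$ is birational onto $Y$, discard the bad locus by the dimension count $\dim(\textnormal{bad locus})+(n-d)<n$ inside the ED correspondence, and transfer criticality in both directions through $\pi(\tang{x}{X})=\tang{\pi(x)}{Y}$ (note this equality needs $x$ to lie in your isomorphism locus $U$, not merely in $P_{0,K}(X)$ as your first paragraph suggests, since $\pi(x)$ could be a singular point of $Y$; your later restriction of the critical points to $U$ and $V$ repairs this, exactly as the paper restricts to $P_{0,K}(X)\setminus\Delta$). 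The one step where your justification is not yet sound is birationality: choosing $K$ generically for each \emph{fixed} $x$ does not produce a single $K$ valid for generic $x$ — one needs an incidence count, e.g.\ that $\{(x,x')\in X\times X \mid x\neq x',\ x-x'\in K\}$ has dimension at most $2d+k-n<d$ for generic $K$, so its image in $X$ is a proper subvariety; this is the classical fact that the paper also invokes without proof.
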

\begin{proof}
Recall that $\crit_{X,Q}(u)$ is the fiber over $u$ of the projection $E_{X,Q} \to \CC^n$ onto the second factor.
By Lemmata~\ref{lem:relevantComp} and \ref{lem:dimEmild}, 
that fiber is the same as the fiber of the restricted projection 
$E_{0,Q}(X) \to \CC^n$, and 
 $\crit_{X,Q}(u) \subseteq P_{0,K}(X)$, where $K$ is the kernel of $Q$.

Now, we consider the orthogonal projection $\pi\colon \RR^n \to K^\perp$.
Due to the genericity of $K$ and the assumption in this section that $\dim  K = k < n-d $,
the restricted projection $\pi|_X$ is birational onto its image, i.e., for almost $x \in X$, we have that $\pi^{-1}(\pi(x)) = \{ x \}$.
Hence, \begin{equation}\Delta := \{ x \in X \mid |\pi^{-1}(\pi(x)) \cap X| > 1 \}\end{equation}
is a lower-dimensional subset of $X$.
Therefore, 
\begin{equation}\label{eq:dimDelta}
   \dim \{ (x,v) \in E_{0,Q}(X) \mid x \in \Delta \} = \dim (\Delta \cap P_{0,K}(X)) + n - d < n,
\end{equation}
and so the genericity of $u$ implies that $\crit_{X,Q}(u) \cap \Delta = \emptyset$.

All in all, $\crit_{X,Q}(u)$ is contained in the locus $P_{0,K}(X) \setminus \Delta$.
That locus contains only smooth points of $X$, and its Zariski closure is all of $X$.
The projection $\pi$ restricted to that locus is injective with injective differentials. Thus, for all $x$ in that locus, we have that $\pi(x)$ is a smooth point of $Y$ and $\pi (\tang{x}{X}) = \tang{\pi(x)}{Y}$.
Therefore, every $x\in \crit_{X,Q}(u)$ (which satisfies $(x-u)\perp_Q\tang{x}{X}$) also satisfies $(\pi(x)-\pi(u)) \perp_{\pi(Q)} \tang{\pi(x)}{Y}$,  i.e., $\pi(x) \in \crit_{Y,\pi(Q)}(\pi(u))$.

For the converse direction of the proof, we make a similar argument for the ED correspondence of~$Y$ by considering the set 
\begin{equation}
    \Delta' := \{ y \in \reg{Y} \mid y \notin \pi(X) \quad \text{ or } \quad |\pi^{-1}(y) \cap X| > 1 \quad \text{ or } \quad \pi^{-1}(y) \cap X \not\subseteq P_{0,K}(X) \}. 
\end{equation}
Then, $\Delta'$ is a lower-dimensional subset of $Y$. As with \eqref{eq:dimDelta}, we conclude that the dimension of 
$ \{(y,v) \in  E_{Y,\pi(Q)} \mid y \in \Delta' \}$ is less than $\dim K^\perp$. Therefore, since $\pi(u)$ is a generic point in~$K^\perp$, we have 
$\crit_{Y,\pi(Q)}(\pi(u)) \cap \Delta' = \emptyset$.
Hence, for every $y \in \crit_{Y,\pi(Q)}(\pi(u))$, there is a unique $x \in X$ with $\pi(x)=y$; and moreover, $x \in P_{0,K}(X)$. This implies again $\tang{y}{Y} = \pi(\tang{x}{X})$.
Thus, $(y-\pi(u)) \perp_{\pi(Q)} \tang{y}{Y}$ implies $(x-u) \perp_Q \tang{x}{X}$. Consequently, for every $y \in \crit_{Y,\pi(Q)}(\pi(u))$ there is a unique $x \in \crit_{X,Q}(u)$.
\end{proof}

\subsection{Significantly degenerate quadrics ($k+d \geq n $)} \label{sec:veryDegenerate}
In this section, we assume $k+d \geq n $, which means $\dim K + \dim X \geq n$.

In that case, $Y=\textnormal{cl}\,\pi(X)$ for generic $K$ is all of $K^\perp$.
Moreover, a generic tangent space of $X$ and the kernel  $K$ span the whole ambient space $\mathbb{C}^n$.
The expected dimension of the intersection between a tangent space and $K$ is $d+k-n$, meaning that $P_{d + k - n,K}(X)$ is Zariski dense in $X$. The next statement, together with Lemma~\ref{lem:relevantComp}, shows that we also need to consider points where the intersection between $\tang{x}{X}$ and $K$ is $d+k-n+1$; i.e., one higher than expected. This means that, in general, also $P_{d + k - n +1,K}(X)$ contributes to the critical points for a generic data point $u$. 

Recall from \eqref{def:d} and \eqref{def:k} that $d=\dim X$ and $k=\dim K$.
\begin{lem}\label{lem:dimEsignificant}
For a generic quadric $Q$ with $k$-dimensional kernel, it holds that:
    \begin{equation}\label{eq:case2}
    \dim  E_{i,Q}(X) \; \begin{cases}
    = d + k & \textnormal{ if } i=d + k - n, \\
    = n & \textnormal{ if } i = d+ k -n +1  \textnormal{ and } d-\dim  \tau(X) < n-k, \\
    < n & \textnormal{ otherwise. }
\end{cases}
\end{equation}
\end{lem}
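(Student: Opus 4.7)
The proof will be a direct case analysis built on top of Lemma~\ref{lem:dimensioPolar}, which provides the explicit formula $\dim E_{i,Q}(X) = n - i(i + n - d - k - 1)$ (valid whenever $i \geq d + k - n$ and $\dim \tau(X) \geq i(i + n - d - k)$, with $E_{i,Q}(X) = \emptyset$ otherwise). The plan is simply to examine when the quantity $i(i + n - d - k - 1)$ equals $0$, is negative, or is positive, since this determines whether $\dim E_{i,Q}(X)$ exceeds, equals, or falls below $n$. The assumption $k + d \geq n$ guarantees that the threshold index $d+k-n$ is non-negative.

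First, I would dispatch the easy case $i = d + k - n$. Here the second factor $i + n - d - k$ vanishes, so the Grassmannian transversality hypothesis $\dim \tau(X) \geq i(i + n - d - k) = 0$ is automatic. Substituting into the formula, $i(i + n - d - k - 1) = (d+k-n)(-1) = n - d - k$, which yields $\dim E_{i,Q}(X) = n - (n - d - k) = d + k$. This accounts for the first line of \eqref{eq:case2}.

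Next, I would treat the boundary case $i = d + k - n + 1$. Now $i + n - d - k - 1 = 0$, so the product inside the formula vanishes and the formula returns $\dim E_{i,Q}(X) = n$. The subtlety is that this conclusion is valid only when the hypothesis $\dim \tau(X) \geq i(i+n-d-k) = d+k-n+1$ of Lemma~\ref{lem:dimensioPolar} holds, which rearranges to $d - \dim \tau(X) \leq n - k - 1$, equivalently $d - \dim \tau(X) < n - k$. When this inequality fails, Lemma~\ref{lem:dimensioPolar} forces $E_{i,Q}(X) = \emptyset$, so its dimension is strictly smaller than $n$.

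Finally, for all other $i$, I would argue that $\dim E_{i,Q}(X) < n$. If $i < d + k - n$, the set is empty by Lemma~\ref{lem:dimensioPolar}. If $i \geq d + k - n + 2$, then both factors $i$ and $i + n - d - k - 1$ are positive integers (the second is at least $1$, and the first is at least $\max\{2, d+k-n+2\} \geq 1$), so their product is at least $1$ and the formula gives $\dim E_{i,Q}(X) \leq n - 1$, while the set is empty whenever the transversality hypothesis fails. I do not foresee a genuine obstacle here; the only point requiring a little care is recognizing that the condition $d - \dim \tau(X) < n - k$ appearing in the statement is precisely the translated form of the transversality hypothesis at $i = d + k - n + 1$.
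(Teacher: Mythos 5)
Your proposal is correct and follows exactly the paper's route: the paper's proof is the one-line observation that, since $n-d-k\leq 0$, the claim follows from Lemma~\ref{lem:dimensioPolar}, and your case analysis (evaluating $i(i+n-d-k-1)$ at $i=d+k-n$, at $i=d+k-n+1$ with the transversality condition $\dim\tau(X)\geq d+k-n+1$ rewritten as $d-\dim\tau(X)<n-k$, and for all remaining $i$) is precisely the computation the paper leaves implicit. No gaps; you have simply written out the details.
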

\begin{proof}
        Since $n-d-k \leq 0$, \eqref{eq:case2} follows from Lemma~\ref{lem:dimensioPolar}.
\end{proof}

A general variety $X$ of dimension $d=\dim X$ satisfies that $\dim \tau(X) = d$. In that case, the second condition from \eqref{eq:case2} is always satisfied (as long as the quadric $Q$ is non-zero). Hence, we typically expect both loci $P_{d + k - n,K}(X)$ and $P_{d + k - n+1,K}(X)$ to contribute to the critical point set.

The Zariski closure of $P_{d + k - n+1,K}(X)$ is the \emph{ramification locus} of the orthogonal projection $\pi$ restricted to the variety $X$:
\begin{equation}  \label{eq:Ram}
\begin{aligned}
 \mathrm{Ram}(\pi |_X) :=& \textnormal{cl} \ P_{d + k - n+1,K}(X) \\
 =& \textnormal{cl} \  \{ x \in \reg{X} \mid K+x \textnormal{ intersects } X \textnormal{ at } x \textnormal{ non-transversely} \}. 
 \end{aligned}
\end{equation}
In other words, $ \mathrm{Ram}(\pi |_X) $ is the Zariski closure of the set of critical points of $\pi$ over $X$ (this has been informally introduced in Section~\ref{sec:overviewResults} without taking the Zariski closure).
The \emph{branch locus} \begin{equation}\mathrm{Br}(\pi |_X) := \textnormal{cl}\, \pi(\mathrm{Ram}(\pi |_X))\end{equation} is the Zariski closure of the corresponding set of critical values.
By Lemma~\ref{lem:dimensioPolar}, 
\begin{equation}\dim \mathrm{Ram}(\pi |_X)  = n-k-1,\end{equation}
and so the branch locus is expected to be a hypersurface in $K^\perp$.
In fact, we will prove that the finitely many critical points on that hypersurface with respect to the nondegenerate quadric $\pi(Q)$ are expected to be in bijection with the finitely many critical points described in the second case of our Main Theorem~\ref{main_informal}.
For that, we need the following statement. 

\begin{lem}\label{lem:piBirationalOnRam}
    For a sufficiently general variety $X$, the projection $\pi$ is birational over the ramification locus $\mathrm{Ram}(\pi |_X)$, i.e., it is injective almost everywhere. 
\end{lem}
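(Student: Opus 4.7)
The plan is to show that a generic ramification point is alone in its $\pi$-fiber among ramification points via a dimension count on an incidence variety. Define
\[
\Gamma := \{ (x, x') \in \mathrm{Ram}(\pi|_X)^2 : x \neq x', \; \pi(x) = \pi(x') \}.
\]
Birationality of $\pi|_{\mathrm{Ram}(\pi|_X)}$ is equivalent to asking that the first-factor projection $\Gamma \to \mathrm{Ram}(\pi|_X)$ be non-dominant, and this in turn follows from the strict bound $\dim \Gamma < \dim \mathrm{Ram}(\pi|_X) = n-k-1$.

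To establish that bound, I would first compute the dimension of the (off-diagonal) fiber product
\[
\widetilde{\Gamma} := \{ (x, x') \in X \times X : x \neq x', \; \pi(x) = \pi(x') \}.
\]
Since $d + k \geq n$, the restricted projection $\pi|_X \colon X \to K^\perp$ is dominant for generic $K$ with generic fiber of dimension $d + k - n$, so $\dim \widetilde{\Gamma} = 2d - (n-k) = 2d + k - n$. Next, Lemma~\ref{lem:dimensioPolar} tells us that $\mathrm{Ram}(\pi|_X) = \textnormal{cl}\, P_{d+k-n+1, K}(X)$ has codimension $d+k-n+1$ in $X$. Imposing the ramification condition on each of the two factors of $\widetilde{\Gamma}$ independently would yield the expected dimension
\[
\dim \Gamma = (2d + k - n) - 2(d + k - n + 1) = n - k - 2,
\]
which is strictly smaller than $n - k - 1 = \dim \mathrm{Ram}(\pi|_X)$. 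A generic $x \in \mathrm{Ram}(\pi|_X)$ therefore avoids the image of the first projection from $\Gamma$, which means $\pi^{-1}(\pi(x)) \cap \mathrm{Ram}(\pi|_X) = \{x\}$, and birationality follows.

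The main obstacle is justifying that the two ramification conditions actually cut codimension $2(d+k-n+1)$ in $\widetilde{\Gamma}$, rather than less. I would handle this by the same Kleiman-transversality mechanism as in the proof of Lemma~\ref{lem:dimensioPolar}, but now lifted to the product $\grass(d, \CC^n) \times \grass(d, \CC^n)$ via the product Gauss map $\tau \times \tau$. Here the hypothesis that $X$ is ``sufficiently general'' is crucial, because the fiber-product condition $\pi(x) = \pi(x')$ couples the two factors through the same kernel $K$, so one cannot translate the two copies of the kernel-dependent Schubert locus $Z_{i,K}$ independently; translating $X$ itself (via the diagonal $\mathrm{GL}(\CC^n)$-action on $\grass(d, \CC^n)^2$) does suffice to restore the required transversality, while the diagonal $\{x = x'\}$ has dimension $d < n-k-1$ in the relevant range and so does not affect the count for $\Gamma$.
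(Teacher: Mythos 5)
Your overall strategy (a double-point dimension count: show the locus $\Gamma$ of distinct pairs of ramification points in the same fiber has dimension at most $n-k-2 < \dim \mathrm{Ram}(\pi|_X)$) is genuinely different from the paper, which never counts double points: it passes to the projective closure, uses Bertini to get $\bar K \cap \mathrm{Ram}(\bar\pi|_{\bar X}) = \emptyset$ and hence finite fibers, and then proves birationality via projective duality -- the map $x \mapsto (K + \tang{x}{X})^\vee$ identifies the ramification locus birationally with $\bar X^\vee \cap \bar K^\vee$ (this is where the hypothesis that $\bar X^\vee$ is a hypersurface enters), and Zak's theorem on Gauss maps then forces the fibers of $\bar\pi$ on the ramification locus to be single points. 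Your arithmetic is fine ($\dim \mathrm{Ram}(\pi|_X) = n-k-1$, expected $\dim \Gamma = n-k-2$), and in characteristic $0$ generic injectivity does give birationality, so the reduction is sound.

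The gap is at the central transversality step, and the fix you propose does not work as stated. All three conditions defining $\Gamma$ -- the fiber condition $\pi(x)=\pi(x')$ and the two excess-tangency conditions $x, x' \in P_{d+k-n+1,K}(X)$ -- depend on the \emph{same} kernel $K$. Kleiman's theorem requires a group acting transitively on the ambient space and moves one side of the intersection by a generic group element; the diagonal $\mathrm{GL}(\CC^n)$-action on $\grass(d,\CC^n)\times\grass(d,\CC^n)$ is not transitive (it preserves the relative position of the two subspaces), so it cannot be invoked there. Replacing "translate $Z_{i,K}\times Z_{i,K}$" by "translate $X$" does not help either: translating $X$ by $g$ is the same as replacing $K$ by $g^{-1}K$, which moves the fiber condition and both tangency conditions simultaneously rather than putting them in general position relative to each other, so no single generic translation separates the $K$-dependent loci you are intersecting. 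A correct completion would have to work with the incidence variety of triples $(x,x',K)$ over $\grass(k,\CC^n)$ and prove that, for a generic secant pair $(x,x')$, the three Schubert-type conditions on $K$ (containing $\spann(x-x')$ and meeting $\tang{x}{X}$, $\tang{x'}{X}$ in excess dimension) intersect properly -- and this is exactly where concrete hypotheses on $X$ must enter, since varieties with degenerate Gauss map or special secant/tangency behavior make the configuration $(\spann(x-x'),\tang{x}{X},\tang{x'}{X})$ non-generic for \emph{every} pair; the paper's explicit assumptions (smoothness, transversality at infinity, $\bar X^\vee$ a hypersurface) play this role in its duality argument. You would also need to rule out excess-dimensional components of the fiber product $\widetilde\Gamma$ over special points of $K^\perp$, which again is a statement for generic $K$, not automatic. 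Finally, a small slip: in the regime $k \geq n-d$ one has $d \geq n-k > n-k-1$, so your parenthetical claim that the diagonal has dimension $d < n-k-1$ is false; this is harmless because the diagonal is excluded from $\Gamma$ by definition, but the relevant diagonal is that of $\mathrm{Ram}(\pi|_X)$, of dimension $n-k-1$, not $d$.
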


We postpone the proof of that lemma to the end of this section, where we also specify concrete assumptions on $X$ that make it sufficiently general.
Now, we provide the remaining parts of the proof of our Main Theorem~\ref{main_informal}.

\begin{thm} \label{prop:generalXlargeK}
For a generic quadric $Q$  with a $k$-dimensional kernel and a generic $u \in \RR^n$, the critical point set
$\crit_{X,Q}(u)$ consists of the zero-loss solutions  $(K+u) \cap \reg{X}$, plus at most finitely many points on the ramification locus $\mathrm{Ram}(\pi |_X)$.
If $X$ is sufficiently general,
those finitely many points are in bijection, via $\pi$, with
$\crit_{\mathrm{Br}(\pi |_X), \pi(Q)}(\pi(u))$;
in particular, their cardinality is~$\textnormal{gEDD} (\mathrm{Br}(\pi |_X))$.
\end{thm}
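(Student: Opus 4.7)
My approach follows the blueprint of Theorem~\ref{prop:generalXsmallK}: analyze $\crit_{X,Q}(u)$ via the decomposition $E_{X,Q} = \bigcup_i E_{i,Q}(X)$ of \eqref{eq:decomp_E} and identify which strata contribute to the fiber over a generic $u$. Combining Lemmata~\ref{lem:relevantComp} and~\ref{lem:dimEsignificant}, only $E_{d+k-n,Q}(X)$ (of dimension $d+k$) and $E_{d+k-n+1,Q}(X)$ (of dimension $n$, when the second condition of Lemma~\ref{lem:dimEsignificant} holds) can carry critical points of a generic $u$; every other stratum is too small to meet a generic fiber of the projection $E_{X,Q} \to \CC^n$.

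Next, I identify the two contributions explicitly. For $x \in P_{d+k-n,K}(X)$ one computes $\dim \tang{x}{X}^{\perp_Q} = n - d + (d+k-n) = k$; since $K \subseteq \tang{x}{X}^{\perp_Q}$ and $\dim K = k$, these spaces coincide, so $(x-u)\perp_Q \tang{x}{X}$ is equivalent to $x - u \in K$. Hence this stratum contributes precisely the zero-loss solutions $(K+u) \cap \reg{X}$, a $(d+k-n)$-dimensional subvariety for generic $u$. The stratum $E_{d+k-n+1,Q}(X)$, when nonempty, has dimension $n$, so its projection to $\CC^n$ is generically zero-dimensional and contributes at most finitely many critical points, all lying in $P_{d+k-n+1,K}(X) \subseteq \mathrm{Ram}(\pi|_X)$. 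This establishes the qualitative first half of the theorem.

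To prove the bijection for sufficiently general $X$, I adapt the birational argument of Theorem~\ref{prop:generalXsmallK} to $\pi|_{\mathrm{Ram}(\pi|_X)}$. By Lemma~\ref{lem:piBirationalOnRam} this map is generically injective, so at a generic $x \in \mathrm{Ram}(\pi|_X)$ one has $\tang{x}{\mathrm{Ram}(\pi|_X)} \cap K = 0$, and $d\pi_x$ identifies $\tang{x}{\mathrm{Ram}(\pi|_X)}$ with $\tang{\pi(x)}{\mathrm{Br}(\pi|_X)}$. A dimension count yields $\dim \pi(\tang{x}{X}) = d - (d+k-n+1) = n-k-1 = \dim \mathrm{Br}(\pi|_X)$, and since $\tang{\pi(x)}{\mathrm{Br}(\pi|_X)} \subseteq \pi(\tang{x}{X})$, the two spaces are equal. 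Because $K \subseteq \ker Q$, we may rewrite $Q(x-u,v) = \pi(Q)(\pi(x)-\pi(u),\pi(v))$, so $(x-u)\perp_Q \tang{x}{X}$ descends to $(\pi(x)-\pi(u))\perp_{\pi(Q)} \tang{\pi(x)}{\mathrm{Br}(\pi|_X)}$. A bad-locus argument on $\mathrm{Br}(\pi|_X)$ (discarding where $\pi|_{\mathrm{Ram}}$ fails to be one-to-one or where $\mathrm{Br}$ is singular, a set whose ED correspondence has dimension strictly below $\dim K^\perp = n-k$) produces the desired bijection for generic $\pi(u)$. Since a generic $Q$ with $k$-dimensional kernel induces a generic nondegenerate form $\pi(Q)$ on $K^\perp$, the resulting count is $\textnormal{gEDD}(\mathrm{Br}(\pi|_X))$.

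The main obstacle I anticipate is the tangent space identity $\tang{\pi(x)}{\mathrm{Br}(\pi|_X)} = \pi(\tang{x}{X})$ at a generic ramification point; this is precisely where the ``sufficiently general'' hypothesis on $X$ enters, and its justification must run in parallel with Lemma~\ref{lem:piBirationalOnRam} by ruling out pathological alignments between the tangent bundle of $X$ and the kernel $K$. Once this identity is in hand, everything else amounts to a repackaging of the techniques already developed for Theorem~\ref{prop:generalXsmallK}.
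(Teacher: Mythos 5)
Your proposal is correct and follows essentially the same route as the paper: the same stratification of the ED correspondence via Lemmata~\ref{lem:relevantComp} and~\ref{lem:dimEsignificant}, the identification $(\tang{x}{X})^{\perp_Q}=K$ on the dense stratum giving the zero-loss solutions, finiteness of the contribution of $E_{d+k-n+1,Q}(X)$, and the bijection with $\crit_{\mathrm{Br}(\pi |_X),\pi(Q)}(\pi(u))$ obtained from Lemma~\ref{lem:piBirationalOnRam} together with the tangent-space dimension count $\dim\pi(\tang{x}{X})=n-k-1$ and a bad-locus genericity argument. The only step you leave implicit is the forward-direction dimension bound inside $E_{d+k-n+1,Q}(X)$ (showing the actual critical points, not just generic ramification points, avoid the locus where $\pi$ fails to be injective or $\mathrm{Ram}(\pi|_X)$ is singular), which the paper carries out explicitly with its set $\Delta$.
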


\begin{proof}
Let $\delta := d+k-n$.
By Lemmata~\ref{lem:dimEsignificant} and~\ref{lem:relevantComp}, we have that $\crit_{X,Q}(u)$ is the union of the fibers over $u$ of the projections 
$E_{\delta, Q}(X) \to  \CC^n$ 
and $E_{\delta+1, Q}(X) \to  \CC^n$ onto the second factor.

First, we consider $E_{\delta, Q}(X) \to  \CC^n$. We show that its fiber $\mathcal{F}_\delta(u)$ over $u$ equals $(K+u) \cap \reg{X}$. We begin by showing that the first set is a subset of the latter. 
By definition, $\mathcal{F}_\delta(u)$ is contained in $P_{\delta,K}(X) \subseteq \reg{X}$.
For all $x \in P_{\delta,K}(X)$, we have that $(\tang{x}{X})^{\perp_Q} = K$.
Therefore, $\mathcal{F}_\delta(u)$ is indeed contained in $\reg{X} \cap (K+u)$. 
For the converse direction, we recall from Lemma~\ref{lem:dimensioPolar} that the dimension of the ramification locus $\mathrm{Ram}(\pi |_X)=\textnormal{cl}(P_{\delta+1,K}(X))$ is $n-k-1$ and thus the genericity of $u$ implies that $K+u$ does not intersect $\mathrm{Ram}(\pi |_X)$.
In other words, $(K+u) \cap \reg{X} \subseteq P_{\delta,K}(X)$.
As argued above, all points $x$ in that intersection satisfy $(x-u) \perp_Q \tang{x}{X}$, and so we conclude that~$(K+u) \cap \reg{X} \subseteq \mathcal{F}_\delta(u)$. 

Second, we consider $E_{\delta+1, Q}(X) \to  \CC^n$ and its fiber $\mathcal{F}_{\delta+1}(u)$ over a generic $u$.
Since $\dim  E_{\delta+1, Q}(X) \leq n$ by Lemma~\ref{lem:dimEsignificant}, the fiber $\mathcal{F}_{\delta+1}(u)$ consist of at most finitely many points. As argued in \eqref{eq:Ram}, those lie on the ramification locus $\mathrm{Ram}(\pi |_X)$.

For the rest of this proof, we assume that $X$ is a sufficiently general variety.
In particular, $\dim  \tau(X) = d$ and so $\dim  E_{\delta+1, Q}(X) = n$ by Lemma~\ref{lem:dimEsignificant}.
We show that the fiber $\mathcal{F}_{\delta+1}(u)$
is in bijection with $\crit_{\mathrm{Br}(\pi |_X), \pi(Q)}(\pi(u))$ via the projection $\pi$.
For that, we proceed analogously to the proof of Theorem~\ref{prop:generalXsmallK}.

By Lemma~\ref{lem:piBirationalOnRam}, the projection $\pi$ induces a birational map $\mathrm{Ram}(\pi |_X) \to \mathrm{Br}(\pi |_X)$.
To simplify notation, we write $R:= \mathrm{Ram}(\pi |_X)$ and $B := \mathrm{Br}(\pi |_X)$. We will first show that this map takes critical points in $P_{\delta+1,K}(X)$ to critical points in $\mathrm{Br}(\pi|_X)$.
Define the subset 
\begin{equation}
    \Delta := \Big\{ x \in P_{\delta+1,K}(X) \;\Big|\;  |\pi^{-1}(\pi(x)) \cap  R|>1 \quad \text{ or } \quad  x \notin \reg{R}\Big\},
\end{equation}
which we need to show our critical points generically avoid. This is a lower-dimensional subset of  $P_{\delta+1,K}(X)$.
Therefore, 
$\{(x,v) \in E_{\delta+1,Q}(X) \mid x\in \Delta \}$
has dimension 
$\dim (\Delta) + n-d+\delta+1 < n$, and so the genericity of $u$ implies that $\mathcal{F}_{\delta+1}(u) \subseteq P_{\delta+1,K}(X) \setminus \Delta$.
The latter locus contains only smooth points of $R$, and its Zariski closure is all of $R$.
The projection $\pi$ restricted to that locus is injective with injective differentials. 
Thus, for all $x$ in that locus, $\pi(x)$ is a smooth point of $B$ and~$\pi(\tang{x}{R}) = \tang{\pi(x)}{B}$.
Moreover, since 
 $\dim \pi(\tang{x}{X}) = d-(\delta+1) = n-k-1 = \dim B$, we even have that $\pi(\tang{x}{X}) = \tang{\pi(x)}{B}$.
Therefore, every $x\in \mathcal{F}_{\delta+1}(u)$ (which satisfies $(x-u)\perp_Q\tang{x}{X}$) also satisfies $(\pi(x)-\pi(u)) \perp_{\pi(Q)} \tang{\pi(x)}{B}$,  i.e., $\pi(x) \in \crit_{B,\pi(Q)}(\pi(u))$.

For the converse direction, we consider
\begin{equation}
    \Delta' := \{ y \in \reg{B} \mid y  \notin \pi(R) \quad \text{ or } \quad |\pi^{-1}(y) \cap R| > 1 \quad \text{ or } \quad \pi^{-1}(y) \cap R \not\subseteq P_{\delta+1,K}(X) \}.
\end{equation}
This is a lower-dimensional subset of $B$, and so 
$\dim \{ (y,v) \in E_{B,\pi(Q)} \mid y \in \Delta' \} < \dim K^\perp$.
Since $\pi(u)$ is a generic point in $K^\perp$, 
$\crit_{B,\pi(Q)}(\pi(u)) \cap \Delta' = \emptyset$.
Hence, for every $y \in \crit_{B,\pi(Q)}(\pi(u))$, there is a unique $x \in R$ with $\pi(x)=y$. Moreover, we have $x \in P_{\delta+1,K}(X)$, which implies again $\tang{y}{B} = \pi(\tang{x}{X})$.
Thus, $(y-\pi(u)) \perp_{\pi(Q)} \tang{y}{B}$ implies $(x-u) \perp_Q \tang{x}{X}$, i.e., $x \in \mathcal{F}_{\delta+1}(u)$.
\end{proof}

\paragraph{Proof of Lemma~\ref{lem:piBirationalOnRam}}
We provide a proof based on projective geometry and, in particular, projective duality. 

The $n$-dimensional complex projective space $\PP^n$ is 
\begin{equation}\label{def_PP}
\PP^n := (\mathbb{C}^{n+1} \setminus \lbrace 0 \rbrace) / \sim,
\end{equation} where two non-zero vectors $v,w$ are identified (i.e., $v \sim w$) if, and only if, they are proportional (i.e., $w = \lambda v$ for some $\lambda \in \mathbb C$).
The \emph{dual projective space} $(\PP^n)^\ast$ is constructed in the same way from the dual vector space $(\mathbb C^{n+1})^\ast$. In other words, $(\PP^n)^\ast$ is the space of all hyperplanes in $\PP^n$.
A \emph{projective variety} $X \subseteq \PP^{n}$ is the solution set of a system of homogeneous polynomial equations in $n+1$ variables.
The \emph{dual variety} $X^\vee \subseteq (\PP^n)^\ast$ is the Zariski closure of the set of all hyperplanes tangent to $\reg{X}$.
For instance, if $X$ is a point, then $X^\vee$ is a hyperplane in $(\PP^n)^\ast$. 
Slightly more generally, if $X$ is a projective subspace of dimension $d$, then $X^\vee$ is a projective subspace of $(\PP^n)^\ast$ of dimension $n-d-1$, which satisfies the biduality $(X^\vee)^\vee = X$.
Over the complex numbers, also nonlinear projective varieties enjoy the property of biduality.

\begin{fact}[{\cite[Ch. 1, Thm. 1.1]{gkz}}]
    Let $X \subseteq \PP^{n}$ be a projective variety. Moreover,
    let $x \in \reg{X}$ and $H \in \reg{X^\vee}$. 
    The hyperplane $H$ is tangent to $X$ at $x$ if, and only if, the hyperplane $x^\vee$ is tangent to $X^\vee$ at the point $H^\vee$.
    In particular, we have $(X^\vee)^\vee = X$.
\end{fact}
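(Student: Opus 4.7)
The plan is to establish biduality via the \emph{conormal variety}. Define
\[
CN(X) := \overline{\{(x, H) \in \reg{X}(\CC) \times (\PP^n)^\ast \mid \tang{x}{X} \subseteq H\}} \subseteq \PP^n \times (\PP^n)^\ast.
\]
I would first verify that $CN(X)$ is irreducible of dimension $n-1$: the projection to the first factor is surjective onto $X$, with fiber over a smooth point $x$ equal to the projective space of hyperplanes containing the $d$-dimensional tangent space $\tang{x}{X}$, which has dimension $n-d-1$; so $\dim CN(X) = d + (n-d-1) = n-1$. By construction, $X^\vee$ is the image of $CN(X)$ under the second projection.

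The heart of the argument is the symmetry claim that the swap involution
\[
\sigma \colon \PP^n \times (\PP^n)^\ast \to (\PP^n)^\ast \times \PP^n, \quad (x, H) \mapsto (H, x)
\]
satisfies $\sigma(CN(X)) = CN(X^\vee)$, where $CN(X^\vee)$ lives in $(\PP^n)^\ast \times ((\PP^n)^\ast)^\ast = (\PP^n)^\ast \times \PP^n$ via the canonical identification. To prove this I would compute the tangent space to $CN(X)$ at a smooth point $(x, H)$ with $x \in \reg{X}$ and $H \in \reg{X^\vee}$. Passing to affine cones in $\CC^{n+1}$ and choosing local coordinates $(y, z)$ so that $\hat X$ is the graph $z = \phi(y)$ near $\hat x = 0$ with $D\phi(0) = 0$, the affine conormal variety is smoothly parametrized near $(\hat x, a)$ by pairs $(y, \beta)$ via $\hat x = (y, \phi(y))$ and $a = (-D\phi(y)^\top \beta,\, \beta)$. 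Differentiating at the base point yields a tangent space whose mixed component is governed by the \emph{second fundamental form} $II$ of $\hat X$. Because $II$ is symmetric in its two arguments, an entirely parallel local computation around $H$ for $X^\vee$ produces the same tangent space after applying $\sigma$, giving $\sigma(CN(X)) \subseteq CN(X^\vee)$ at smooth points.

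Once this is established, $\sigma(CN(X))$ and $CN(X^\vee)$ are both irreducible of dimension $n-1$ and agree on a dense open set, so they coincide. Projecting $\sigma(CN(X)) = CN(X^\vee)$ onto its second factor recovers $X$ as $(X^\vee)^\vee$. The pointwise statement now follows: for $x \in \reg{X}$ and $H \in \reg{X^\vee}$, the hyperplane $H$ is tangent to $X$ at $x$ exactly when $(x, H) \in CN(X)$, which via $\sigma$ is equivalent to $(H, x) \in CN(X^\vee)$, i.e., $x^\vee$ being tangent to $X^\vee$ at $H^\vee$.

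The main obstacle is the symmetry of the tangent-space computation. The appearance of the symmetric second fundamental form is the right hint, but turning this into a genuine identification between the tangent spaces of $CN(X)$ at $(x,H)$ and of $CN(X^\vee)$ at $(H,x)$ under $\sigma$ requires careful bookkeeping across two levels of duality. The hypothesis $H \in \reg{X^\vee}$ is essential here: it ensures that the second projection $CN(X) \to X^\vee$ has injective differential at $(x,H)$, so that the local parametrization above genuinely describes the conormal variety of $X^\vee$ near $H$.
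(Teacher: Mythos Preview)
The paper does not prove this statement; it is quoted as a \textbf{Fact} with a reference to Gelfand--Kapranov--Zelevinsky, so there is no in-paper argument to compare against. Your conormal-variety approach is exactly the classical proof found in that reference: there the key symmetry $\sigma(CN(X)) = CN(X^\vee)$ is phrased by saying that $CN(X)$ is a Lagrangian (Legendrian) subvariety of the incidence correspondence $\{(x,H)\in \PP^n\times(\PP^n)^\ast \mid x\in H\}$ with respect to its natural contact structure, a condition that is manifestly invariant under the swap $\sigma$; your second-fundamental-form computation is a local-coordinate verification of precisely that Lagrangian property. The outline is correct.
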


To make use of projective duality, we proceed as follows.
We consider $\mathbb{C}^n$ as an affine chart of $\PP^n$, by identifying each vector $(x_1,\ldots,x_n)$ with the equivalence class of $(x_1,\ldots,x_n,1)$ under $\sim$.
Then, starting from a real variety $X \subseteq \RR^n$, we consider the Zariski closure $\bar X$ of $X(\CC) \subseteq \CC^n \subseteq \PP^n$ inside the complex projective space $\PP^n$.
The dimension of $X$ inside $\PP^n$ is the same as the real dimension of $X$, which we denote by $d$, as before.
The orthogonal projection $\pi: \RR^n \to K^\perp$ with kernel $K$ of dimension $k$ corresponds in the projective setting to a projection 
$\bar \pi \colon \PP^n \dashrightarrow \PP^{n-k}$ that is undefined precisely at the $(k-1)$-dimensional subspace $\bar K$ that is the intersection of the Zariski closure of $K$ inside $\PP^n$ with the hyperplane $H$ at infinity (i.e., the hyperplane $ H \subseteq \PP^n$ where the last coordinate is zero).
We will now prove, for sufficiently general $X$, that the ramification and branch loci of projecting $\bar X$ via $\bar \pi$ are indeed birational to each other via $\bar  \pi$.
Since a general variety $X$ satisfies the following three properties: 1) it is smooth, 2) its closure $\bar X$ meets the hyperplane $H$ at infinity transversely, and 3) the dual variety $\bar X^\vee$ is a hypersurface \cite[Ch. 1, Cor. 1.2]{gkz}, it is sufficient to prove the following statement.

\begin{lem} \label{lem:projBirational}
Suppose that $H$ intersects $\bar X$ only at smooth points, and that the intersection is transversal therein. 
Then, for a generic subspace $\bar K \subseteq H$ of dimension $k-1$, $\bar \pi$ has finite fibers over its ramification locus $\mathrm{Ram}(\bar \pi |_{\bar X})$. Suppose moreover that $\bar X^\vee$ is a hypersurface. Then, for a generic subspace $\bar K \subseteq H$ of dimension $k-1$, $\bar \pi$ is birational over its ramification locus $\mathrm{Ram}(\bar \pi |_{\bar X})$, i.e., it is bijective to the branch locus $\mathrm{Br}(\bar \pi |_{\bar X})$ almost everywhere. 
\end{lem}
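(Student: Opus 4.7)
The approach is via projective duality and Kleiman transversality. I would use the natural identification $\bar K^\perp \cong (\mathbb{P}^{n-k})^\ast$ between hyperplanes of $\mathbb{P}^n$ containing $\bar K$ and hyperplanes of the target $\mathbb{P}^{n-k}$. The central object is the incidence variety
\begin{equation*}
    Z \ := \ \{(x, H) \in \mathrm{reg}(\bar X) \times \bar K^\perp \mid T_x \bar X \subseteq H\}
    \ \subseteq \ \mathrm{Con}(\bar X),
\end{equation*}
i.e., the conormal variety $\mathrm{Con}(\bar X)$ cut down to hyperplanes through $\bar K$. Since $\mathrm{Con}(\bar X)$ has pure dimension $n-1$, Kleiman transversality applied to the $\mathrm{GL}(n+1)$-action on $(k-1)$-subspaces yields $\dim Z = n-k-1$ for generic $\bar K$, with $Z$ projecting dominantly to $\mathrm{Ram}(\bar\pi|_{\bar X})$ via the first factor and to $\bar K^\perp \cap \bar X^\vee$ via the second.

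For the first claim, the projection $Z \to \mathrm{Ram}(\bar\pi|_{\bar X})$ is birational because a generic $x \in \mathrm{Ram}(\bar\pi|_{\bar X})$ realizes the minimum $\dim(T_x\bar X \cap \bar K) = d+k-n$, forcing $H_x := T_x\bar X + \bar K$ to be a unique hyperplane. A tangent-space computation gives
\begin{equation*}
    \mathrm{Ram}(\bar\pi|_{\bar X}) \cap \bar\pi^{-1}(y) \ = \ \mathrm{Sing}(\bar X \cap \langle \bar K, y\rangle) \cap \mathrm{reg}(\bar X),
\end{equation*}
and a Bertini-type argument shows that, for generic $\bar K$, the singular members of the pencil of hyperplane sections through $\bar K$ have only isolated singularities. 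This forces $\dim \mathrm{Br}(\bar\pi|_{\bar X}) = n-k-1$ and gives the finite-fiber statement.

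For the second claim, under the hypothesis that $\bar X^\vee$ is a hypersurface, biduality renders the conormal projection $\mathrm{Con}(\bar X) \to \bar X^\vee$ birational, and this extends to a birationality $Z \to \bar K^\perp \cap \bar X^\vee$ for generic $\bar K$. Composing with the birational inverse of $Z \to \mathrm{Ram}(\bar\pi|_{\bar X})$ yields a birational map $\gamma \colon \mathrm{Ram}(\bar\pi|_{\bar X}) \dashrightarrow \bar K^\perp \cap \bar X^\vee$ sending $x \mapsto H_x$. The decisive step is to identify $H_x$, under $\bar K^\perp \cong (\mathbb{P}^{n-k})^\ast$, with the projective tangent hyperplane to $\mathrm{Br}(\bar\pi|_{\bar X})$ at $\bar\pi(x)$: differentiating $\bar\pi$ gives $\bar\pi(T_x\bar X) = H_x \cap \mathbb{P}^{n-k}$, which must equal $T_{\bar\pi(x)}\mathrm{Br}(\bar\pi|_{\bar X})$ by a dimension count. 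Consequently, $\bar\pi|_{\mathrm{Ram}(\bar\pi|_{\bar X})}$ factors as $\gamma$ composed with the inverse Gauss map of the hypersurface $\mathrm{Br}(\bar\pi|_{\bar X})$. Since in characteristic zero the Gauss map of any irreducible hypersurface that is not a linear subspace is birational, a consequence of biduality (the Fact preceding the lemma), one obtains birationality of $\bar\pi|_{\mathrm{Ram}(\bar\pi|_{\bar X})}$ onto $\mathrm{Br}(\bar\pi|_{\bar X})$. The main obstacle is the tangent-space identification above: only the inclusion $\supseteq$ is formal, and the reverse requires $d\bar\pi_x$ to be injective on $T_x\mathrm{Ram}(\bar\pi|_{\bar X})$, which reduces to yet another Kleiman-type statement ensuring $T_x \mathrm{Ram}(\bar\pi|_{\bar X}) \cap \bar K = \emptyset$ for generic $\bar K$, plus ruling out $\mathrm{Br}(\bar\pi|_{\bar X})$ being a linear subspace (automatic since $\bar K^\perp \cap \bar X^\vee$ has positive dimension when $n-k > 1$).
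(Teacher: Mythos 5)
The decisive step of your second claim rests on the assertion that ``in characteristic zero the Gauss map of any irreducible hypersurface that is not a linear subspace is birational,'' and this is false: the Gauss map of a hypersurface is birational onto its dual if and only if that dual is again a hypersurface, and for dual-defective hypersurfaces (a cone over a plane curve in $\mathbb{P}^3$, the tangent developable of a space curve, and more generally any hypersurface whose generic tangent hyperplane is tangent along a positive-dimensional contact locus) the Gauss map is not even generically finite. Since $\mathrm{Br}(\bar\pi|_{\bar X})$ is a very special hypersurface, not a generic one, you cannot invoke such a statement; proving that it is not dual-defective is essentially the content of the lemma. This is exactly the point where the paper takes a different route: it composes the birational map $x\mapsto H_x^\vee$ onto $\bar X^\vee\cap \bar K^\vee$ with the Gauss map $\delta$ of $\bar X^\vee\cap \bar K^\vee$, uses the injective map $\beta$ on the target $\mathbb{P}^{n-k}$ together with the finite-fiber statement from the first part to see that $\delta$ has finite generic fibers, and then invokes Zak's theorem that fibers of projective Gauss maps are linear spaces to conclude $\delta$ is birational; birationality of $\bar\pi|_{\mathrm{Ram}}$ then follows. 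Ironically, your own factorization $G\circ\bar\pi|_{\mathrm{Ram}}=\gamma$ (Gauss map of $\mathrm{Br}$ composed with the projection equals the birational map $x\mapsto H_x$) would already give generic injectivity of $\bar\pi|_{\mathrm{Ram}}$ without any birationality of $G$, since $\gamma$ is generically injective; but you did not draw that conclusion, and the factorization itself is only sketched: the identification $\bar\pi(T_x\bar X)=T_{\bar\pi(x)}\mathrm{Br}$ needs $T_x\mathrm{Ram}\cap\bar K=\emptyset$ at a generic $x$, which is \emph{not} a direct Kleiman statement (the locus $\mathrm{Ram}$ depends on $\bar K$, so the two objects cannot be moved independently); in characteristic zero it should instead be deduced from the first claim via generic smoothness of the dominant map $\mathrm{Ram}\to\mathrm{Br}$.

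Your argument for the first claim is also not carried out. The fibers of $\bar\pi$ form an $(n-k)$-dimensional family of $k$-planes through $\bar K$, not a pencil, and the assertion that for generic $\bar K$ every singular member of this family has only isolated singularities inside $\mathrm{reg}(\bar X)$ is precisely the statement to be proved, not a standard Bertini output; moreover you never use the hypothesis that the hyperplane $H$ at infinity meets $\bar X$ transversally at smooth points, which is the essential input. The paper's argument is: by Bertini applied inside $H$, a generic $\bar K\subseteq H$ meets $\bar X$ transversally at smooth points, hence $\bar K\cap\mathrm{Ram}(\bar\pi|_{\bar X})=\emptyset$; then any positive-dimensional component of a fiber of $\bar\pi|_{\mathrm{Ram}}$ would be a projective curve inside a $k$-plane in which $\bar K$ is a hyperplane, hence would meet $\bar K$, a contradiction. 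Without some such use of the transversality-at-infinity hypothesis, the finite-fiber claim (over \emph{all} of the ramification locus, which is what the later degree arguments require) does not follow from your generic dimension counts.
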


\begin{proof}
To simplify notation, we write $X, \pi, K$ instead of $\bar X, \bar \pi, \bar K$.

Since $H$ intersects $X$ transversally (at smooth points), the classical theorem of Bertini \cite{kleiman1997bertini} implies that a generic $K \subset H$ will intersect $X$ only at smooth points and transversally as well. Thus, for generic $K$, we have that $K \cap \mathrm{Ram}(\pi |_X) = \emptyset$. This implies that the projection $\pi$ restricted to $\mathrm{Ram}(\pi |_X) $  has finite fibers (indeed, if a fiber $\pi^{-1}(y) \cap \mathrm{Ram}(\pi |_X)$ would contain a curve, then -- since $K$ is a hyperplane in the projective space $\pi^{-1}(y)$ -- that curve would intersect $K$, which contradicts that $K \cap \mathrm{Ram}(\pi |_X) = \emptyset$).

We now show the second claim. For $x \in P_{d + k - n+1,K}(X) $, we have that $H_x := K + \tang{x}{X} $ is a hyperplane in $\PP^n$, since $\dim  H_x = \dim  \tang{x}{X} + \dim  K - (d + k - n+1)  = n - 1$. In particular, $H_x^\vee$ belongs to $X^\vee \cap K^\vee$. Hence, we obtain a rational map \begin{equation}\alpha \colon \mathrm{Ram}(\pi |_X) = \textnormal{cl} \ P_{d + k - n+1,K}(X) \dashrightarrow  X^\vee \cap K^\vee\end{equation} defined over $P_{d + k - n+1,K}(X)$ as $x \mapsto H_x^\vee$. Since $X^\vee$ is a hypersurface, this map is birational. Indeed, by biduality, a rational inverse is provided by the Gauss map \eqref{def_gaussmap} $X^\vee \dashrightarrow (X^\vee)^{\vee} = X$ restricted to $X^\vee \cap K^\vee$. That restriction is a well-defined rational map, since -- due to the genericity of $K$ -- $X^\vee \cap K^\vee$ is not contained in the singular locus of $X^\vee$. 

Note that for a generic $x \in \mathrm{Ram}(\pi |_X)$, we have by biduality that \begin{equation}\tang{H_x^\vee}{(X^\vee \cap K^\vee)} = \tang{H_x^\vee}{X^\vee \cap K^\vee} = x^\vee \cap K^\vee.\end{equation} 
The intersection $x^\vee \cap K^\vee$ depends only on $\pi(x)$ (i.e., not on $x$ itself). 
Hence, the following is a well-defined map over the projectivized quotient $\PP^{n-k}$: $\beta(y) := x^\vee \cap K^\vee$ for some $x \in \pi^{-1}(y)$. Putting everything together,  we obtain the following commutative diagram:
\[\begin{tikzcd}
	{\mathrm{Ram}(\pi |_X) } & {X^\vee \cap K^\vee} & {\grass(n-k-1, (\PP^n)^\ast)} \\
	{\PP^{n-k}}
	\arrow["\alpha", dashed, from=1-1, to=1-2]
	\arrow["\pi"', from=1-1, to=2-1]
	\arrow["\delta", dashed, from=1-2, to=1-3]
	\arrow["\beta", from=2-1, to=1-3]
\end{tikzcd}\]
Here, $\delta$ is the Gauss map \eqref{def_gaussmap} of $X^\vee \cap K^\vee$. Now, $x^\vee \cap K^\vee = {\tilde x}^\vee \cap K^\vee$ if, and only if, $\pi(x) = \pi({\tilde x})$. In other words, $\beta$ is  injective. 
Since $\pi$ has finite fibers by the first part of the proof and $\beta$ is injective, the generic fibers of $\delta$ must be finite as well. By \cite[Theorem 2.3]{zak1993tangents}, the generic fibers of projective Gauss maps are projective subspaces, and so we conclude that the generic fiber of the Gauss map $\delta$ must be a point. In other words, $\delta$ is birational onto its image. Since $\alpha$ is also birational, we have 
\begin{equation}\pi^{-1} = \alpha^{-1}\circ \delta^{-1}\circ \beta.\end{equation}
That means $\pi$ is birational as well.    
\end{proof}

\subsection{Counting over Projective Varieties} \label{ssec:projective}
In this section, we consider varieties $X \subseteq \RR^n$ that are \emph{affine cones} over projective varieties; that is, $X$ is the solution set in $\mathbb{R}^n$ of \emph{homogeneous} polynomials in $n$ variables. For this, we denote real projective space by $\PP_\RR^n$ (defined by replacing $\CC$ by $\RR$ in~\eqref{def_PP}).

Many neuromanifolds are such affine cones.
For instance, if the activation $\sigma$ of the multilayer perceptron in Example~\ref{exmp:determinantal} is a monomial $\sigma(x) = x^r$, then its neuromanifold is an affine cone.
The same holds for similar architectures with weight-sharing restrictions, such as convolutional neural networks \cite{shahverdi2024geometryoptimizationpolynomialconvolutional}. 
Also, the neuromanifolds of deep networks where each layer is a lightning self-attention mechanism \eqref{eq:attention} are affine cones \cite{henry2024geometry}.

All results from the sections above apply to affine cones over projective varieties.
In particular, for a sufficiently general projective variety, its affine cone is general enough for Lemma~\ref{lem:piBirationalOnRam} and the full version of Theorem~\ref{prop:generalXlargeK} to hold. 
Indeed, if $X \subseteq \RR^n$ is the affine cone over a general projective variety $Z\subset \PP^{n-1}_\RR$ and $\bar X$ is the Zariski closure of $X$ in complex projective space $\PP^n$, then Lemma~\ref{lem:projBirational} applies due to the following reasons: for a general $Z$, we have that 1)  $Z$ is smooth and so is $\bar X$, 2) the hyperplane $H$ at infinity intersects $\bar X$ transversely, and 3) $Z^\vee$ is a hypersurface and thus $\bar X^\vee$ is a hypersurface as~well.

A crucial advantage of the projective scenario is that it enables an elegant counting of critical points via characteristic classes.
To this end, we recall the concept of \emph{polar varieties} that is classical in projective geometry (see e.g. \cite{piene1978polar}):
Given an affine cone $X \subseteq \mathbb{C}^{n}$ over a projective variety and a linear subspace $V \subseteq \mathbb{C}^{n}$, the corresponding polar variety is
\begin{align} \label{eq:polarVariety}
    \mathcal{P}(X,V) := \textnormal{cl} \ \{ x \in \reg{X} \setminus V \mid V+x \text{ intersects } X \text{ at } x \text{ non-transversely} \}.
\end{align}
Note that, in the setting of Section~\ref{sec:veryDegenerate}, if $X$ is an affine cone, then $\mathrm{Ram}(\pi |_X) = \mathcal{P}(X,K)$; see \eqref{eq:Ram}.
More generally, the non-transversality condition in \eqref{eq:polarVariety} (which means that $\dim (V + \tang{x}{X}) < n$) is equivalent to $\dim (V \cap \tang{x}{X}) \geq \dim  V +d -n+1$, where $d$ is the dimension of the affine cone $X$.
Thus, for $\dim  V \geq n-d-1 $, we have that 
\begin{align} \label{eq:polarOurVsClassical}
    \mathcal{P}(X,V) = \textnormal{cl} \ P_{\dim  V +d -n+1, V} (X);
\end{align}
cf.\ \eqref{eq:exactPolarLocus}.
For generic linear spaces $V$ of fixed dimension $j$, the degrees of the polar varieties $\mathcal{P}(X,V)$ are the same.  It is thus possible to define the \emph{$j$-th polar degree of $X$} as
\begin{equation}
    \delta_j(X) := \degg \mathcal{P}(X, V), 
\end{equation}
where $V \subseteq \CC^n$ is a generic linear subspace of dimension $j$.
 When $X$ is smooth, its polar degrees are equivalent to the Chern classes of its tangent bundle; in fact, they can be computed from one another via a recursive formula \cite{holme1988geometric}.

A fundamental result for projective varieties states that the generic Euclidean distance degree of the associated affine cone coincides with the sum of all the polar classes \cite[Theorem 5.4]{draisma2016euclidean}:
\begin{align} \label{eq:polarFormula}
    \textnormal{gEDD}(X) = \sum_{j=n-d-1}^{n-1} \delta_j(X).
\end{align}

Since polar degrees stay invariant under generic projections \cite{piene1978polar},  
an interesting consequence of \eqref{eq:polarFormula} is the following:
\begin{fact}[{\cite[Cor. 6.1]{draisma2016euclidean}}]
\label{fact:projection}
    Let $X \subseteq \mathbb{R}^n$ be the affine cone over a projective variety, $K \subseteq \mathbb{R}^n$ be a generic linear subspace of dimension $\dim K =k< n-d = n-\dim  X$, and as in \eqref{def:Y} denote $Y=\textnormal{cl} \ \pi(X)$. Then, the orthogonal projection $\pi: \mathbb{R}^n \to K^\perp$ satisfies that
    \begin{align}
        \textnormal{gEDD}(Y) = \textnormal{gEDD} (X).
    \end{align}
\end{fact}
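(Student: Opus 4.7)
The plan is to combine the polar-degree formula \eqref{eq:polarFormula} with the classical invariance of polar degrees under generic projection from \cite{piene1978polar}. First I would record that, since $k+d < n$ and $K$ is generic, the restriction $\pi|_X \colon X \to Y$ is birational onto its image (the argument parallels the proof of Theorem \ref{prop:generalXsmallK}: generically $\tang{x}{X} \cap K = \{0\}$, so $\dim Y = d$, and a Grassmannian dimension count on the incidence set $\{(x_1, x_2, K) : x_1 \neq x_2, x_1 - x_2 \in K\}$ shows the generic fiber of $\pi|_X$ is a single point in the regime $k<n-d$). Hence $Y$ is an affine cone in the $(n-k)$-dimensional ambient space $K^\perp$, and formula \eqref{eq:polarFormula} applies to both $X$ and $Y$:
\begin{align*}
    \textnormal{gEDD}(X) = \sum_{j=n-d-1}^{n-1} \delta_j(X), \qquad \textnormal{gEDD}(Y) = \sum_{j=n-k-d-1}^{n-k-1} \delta_j(Y).
\end{align*}
The index ranges differ by a uniform shift of $k$, so it suffices to establish the correspondence $\delta_j(Y) = \delta_{j+k}(X)$ for $j \in \{n-k-d-1, \ldots, n-k-1\}$, after which the two sums agree term by term.

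To prove this correspondence, I would fix a generic $j$-dimensional subspace $V \subseteq K^\perp$ and consider $V + K \subseteq \CC^n$, which is a generic $(j+k)$-dimensional subspace of $\CC^n$. The key linear-algebra observation is that for $x \in \reg{X}$ with $\tang{x}{X} \cap K = \{0\}$ (which holds generically since $k < n-d$), the projection $\pi$ restricts to a linear isomorphism $\tang{x}{X} \cap (V+K) \xrightarrow{\sim} \pi(\tang{x}{X}) \cap V = \tang{\pi(x)}{Y} \cap V$: surjectivity follows by writing any $w \in \pi(\tang{x}{X}) \cap V$ as $\pi(u)$ with $u \in \tang{x}{X}$, and noting $u - w \in K$; injectivity is immediate from $\tang{x}{X} \cap K = \{0\}$. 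Consequently the non-transversality condition defining $\mathcal{P}(X, V+K)$ coincides pointwise with the one defining $\mathcal{P}(Y, V)$ at $\pi(x)$, so $\pi$ restricts to a birational map $\mathcal{P}(X, V+K) \dashrightarrow \mathcal{P}(Y, V)$. Since a generic linear projection that is birational on a subvariety preserves its degree (compute both degrees by intersecting with complementary linear subspaces; the ones for $\mathcal{P}(Y, V)$ pull back under $\pi$ to valid witnesses for $\mathcal{P}(X, V+K)$), we obtain $\delta_{j+k}(X) = \delta_j(Y)$, and summing over $j$ concludes the proof.

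The main subtlety lies in the last degree-preserving step: matching the two polar varieties set-theoretically is straightforward from the tangent-space computation, but one must ensure that no components are lost or introduced when taking Zariski closures, and that the generic complementary linear subspaces used to compute the degrees are compatible under $\pi$. The cleanest route is to bypass the explicit bookkeeping and invoke Piene's theorem \cite{piene1978polar} directly, which asserts that polar classes are preserved under generic linear projection; this is exactly tailored to our setting and immediately yields $\delta_{j+k}(X) = \delta_j(Y)$.
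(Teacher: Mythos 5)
Your proposal is correct and follows essentially the same route as the paper: the paper states this result as a quoted Fact from \cite{draisma2016euclidean} and justifies it in a single sentence by combining the polar-degree formula \eqref{eq:polarFormula} with the invariance of polar degrees under generic projections from \cite{piene1978polar}, which is exactly your strategy. Your additional bookkeeping (birationality of $\pi|_X$ for $k<n-d$ and the index shift $\delta_j(Y)=\delta_{j+k}(X)$) just makes explicit what the paper leaves to the cited references, and mirrors the argument the paper carries out in Lemma~\ref{lemm:polarrecurs} for the significantly degenerate case.
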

In particular, for an affine cone $X$ over a projective variety, the number of critical points in Theorem~\ref{prop:generalXsmallK} is simply the generic Euclidean distance degree of $X$ itself.

We believe that this fact holds true for all varieties (i.e., not just for affine cones), but we are not aware of a proof at the moment. Proving the following would be a fundamental contribution to the theory of Euclidean distance degrees.
\begin{conj}
    Fact~\ref{fact:projection} holds for arbitrary varieties.
\end{conj}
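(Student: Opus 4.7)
The plan is to extend the proof strategy of Fact~\ref{fact:projection} from affine cones to arbitrary affine varieties via projective compactification. Embedding $X \subseteq \RR^n \subseteq \PP^n$ as an affine chart, the orthogonal projection $\pi$ extends to a rational projection $\bar\pi \colon \PP^n \dashrightarrow \PP^{n-k}$ whose indeterminacy center is a $(k-1)$-dimensional subspace $\bar K$ of the hyperplane $H$ at infinity. Denoting by $\bar X \subseteq \PP^n$ and $\bar Y \subseteq \PP^{n-k}$ the respective projective closures, this reduces the conjecture to a statement comparing suitable projective invariants of $\bar X$ and $\bar Y$.

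First, I would seek a polar-class formula for $\textnormal{gEDD}(X)$ for an arbitrary affine variety, generalizing \cite[Theorem~5.4]{draisma2016euclidean}. For affine cones, the formula is the sum of polar degrees of $\bar X$ displayed in \eqref{eq:polarFormula}; for general $X$, one expects additional terms involving polar invariants of the part at infinity $\bar X \cap H$, since these encode critical points that might escape to infinity as $u$ varies. Candidate formulas of this shape appear implicitly in the ED-defect literature \cite{MAXIM2020102101} and in work on characteristic classes of singular projective varieties.

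Second, one applies Piene's classical theorem \cite{piene1978polar} that polar degrees are preserved under generic projections of projective varieties. Since the indeterminacy locus $\bar K$ is contained in $H$, the map $\bar\pi$ induces a generic projection of $H$ onto itself inside $\PP^{n-k}$; hence it induces generic projections of both $\bar X$ and $\bar X \cap H$. Consequently, the corresponding polar degrees coincide with those of $\bar Y$ and $\bar Y \cap H$, and the conjectured formula from the first step then gives $\textnormal{gEDD}(X) = \textnormal{gEDD}(Y)$.

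The main obstacle is the first step: establishing the polar-class formula for arbitrary affine varieties in a form whose every term is visibly invariant under generic projection. An alternative, possibly more tractable route is via semi-continuity. Theorem~\ref{prop:generalXsmallK} already shows that $|\crit_{X,Q}(u)| = \textnormal{gEDD}(Y)$ for generic $u$ and for a generic quadric $Q$ with $k$-dimensional kernel, so by degenerating a nondegenerate quadric to such a $Q$ one obtains $\textnormal{gEDD}(Y) \leq \textnormal{gEDD}(X)$. The conjecture then reduces to ruling out critical points that escape to infinity under this degeneration, which amounts to a properness statement for the ED correspondence over the projective closure $\bar X$ — a statement that the genericity of $K$, and in particular the transversality of $\bar K$ with $\bar X \cap H$, should enforce.
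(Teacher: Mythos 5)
First, a point of calibration: the statement you are proving is stated in the paper as a \emph{conjecture} --- the authors explicitly say they are not aware of a proof --- so there is no paper proof to compare against, and your text should be judged as a proposed argument on its own. As it stands, it is a research outline rather than a proof: both routes you describe hinge on steps that are left unestablished. In the first route, the needed ingredient is a polar-class formula for $\textnormal{gEDD}(X)$ of an \emph{arbitrary} affine variety. The formula \eqref{eq:polarFormula} from \cite{draisma2016euclidean} is proved only for affine cones, and for general affine varieties the generic ED degree is not simply the sum of polar degrees of the projective closure $\bar X$; the ``additional terms involving the part at infinity'' that you posit are exactly the missing content, and neither their existence in a usable form nor their behavior under the projection is established (the ED-defect discussion in \cite{MAXIM2020102101} concerns a different comparison and does not supply this). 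A second concrete gap in the same route: Piene's invariance of polar degrees \cite{piene1978polar} is for projections from a \emph{generic} center, whereas your center $\bar K$ is constrained to lie in the hyperplane $H$ at infinity, hence is not generic in $\PP^n$. One would have to re-prove invariance for this constrained family; note that the paper's Lemma~\ref{lemm:polarrecurs} does precisely this kind of work in the cone case, where the cone structure makes the constraint harmless --- for non-cones this is where the actual difficulty sits, so invoking ``generic projection'' here begs the question.

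The semicontinuity route has the same character. Degenerating a nondegenerate quadric to a generic $Q$ with kernel $K$ and invoking Theorem~\ref{prop:generalXsmallK} can plausibly give $\textnormal{gEDD}(Y)\leq \textnormal{gEDD}(X)$, though even this requires a conservation-of-number argument over the space of quadrics (irreducibility of the relevant ED correspondence, finiteness of the special fiber, and control of multiplicities), which you do not supply. The reverse inequality is then exactly the conjecture: you must rule out critical points escaping to infinity, colliding (limit points of multiplicity $\geq 2$), or degenerating into the singular locus of $X$ as $Q$ degenerates. Labeling this ``a properness statement for the ED correspondence over $\bar X$'' names the obstacle but does not overcome it, and genericity of $K$ alone is not obviously sufficient without a transversality-at-infinity analysis of $\bar X\cap H$ --- which is precisely the structure that the affine-cone hypothesis provides for free in Fact~\ref{fact:projection} and that is absent for general $X$. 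So the proposal identifies reasonable attack routes, but each one terminates at an unproved statement that is essentially equivalent in difficulty to the conjecture itself.
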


Now, we turn our attention to the setting of Section~\ref{sec:veryDegenerate} and the computation of $\textnormal{gEDD} (\mathrm{Br}(\pi |_X))$.
Recall that the finitely many critical points on the ramification locus in Theorem~\ref{prop:generalXlargeK} appear as the generic fiber of the projection 
$E_{d+k-n+1,Q}(X) \to \CC^n$ onto the second factor.
If $X$ is the affine cone over a general projective variety and has dimension $d$, then $\dim  \tau(X) = d-1$.
Thus, in the case that $K$ is a hyperplane (i.e., $k = n-1$), we have that $E_{d+k-n+1,Q}(X) = \emptyset$ by Lemma~\ref{lem:dimensioPolar}, meaning that there are no critical points on the ramification locus in addition to the zero-loss solutions; see Example~\ref{ex:affineConeSurface} below.
Whenever $k < n-1$ (while still assuming $k \geq n-d$), we have that $\dim E_{d+k-n+1,Q}(X) = n$ by Lemma~\ref{lem:dimEsignificant}, and so -- by Theorem~\ref{prop:generalXlargeK} -- we expect finitely many critical points on  $\mathrm{Ram}(\pi |_X)$
of cardinality $\textnormal{gEDD} (\mathrm{Br}(\pi |_X))$.

\begin{exmp}\label{ex:affineConeSurface}[$n=3, d=2, k=2$.]
    For a general surface $X$ in $\RR^3$, its set of tangent planes $\tau(X)$ exhausts (almost) all origin-passing planes in $\RR^3$. Hence, we expect a given plane $K$ to coincide with the tangent plane at finitely many points; see also Figure~\ref{fig:ellPlane} and Lemma~\ref{lem:dimEsignificant}, which yields that $\dim  P_{2,K}(X)=0$.
    However, for an affine cone  $X$ over a projective plane curve, the set of tangent planes $\tau(X)$ is one-dimensional and so we expect none of those planes to coincide with $K$, meaning that  $P_{2,K}(X) = \emptyset$.
    \hfill $\diamondsuit$
\end{exmp}

For affine cones over projective varieties, we can generalize the formula \eqref{eq:polarFormula} to a count of $\textnormal{gEDD} (\mathrm{Br}(\pi |_X))$ by truncating the sum of polar classes:

\begin{thm}\label{thm:polardegs}
    Let $X \subseteq \RR^n$ be the affine cone over a projective variety. Suppose that $X$ satisfies the assumptions of Lemma~\ref{lem:projBirational}. Then, a generic subspace $K \subseteq \RR^n$ of dimension $k \geq n - \dim  X$ satisfies
    \begin{align}
        \textnormal{gEDD} (\mathrm{Br}(\pi |_X)) = \sum_{j=k}^{n-1} \delta_{j} (X).
    \end{align}
\end{thm}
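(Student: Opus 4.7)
The plan is to apply the polar degree formula \eqref{eq:polarFormula} to the branch locus and to reinterpret the resulting polar degrees of $\mathrm{Br}(\pi |_X)$ as the shifted polar degrees $\delta_{j+k}(X)$ of $X$. First, I would verify that $\mathrm{Br}(\pi |_X)$ is itself an affine cone over a projective variety: since $X$ is an affine cone and $\pi$ is linear, both $\mathrm{Ram}(\pi |_X) = \mathcal{P}(X,K)$ (see \eqref{eq:polarOurVsClassical}) and its image $\mathrm{Br}(\pi |_X)$ are affine cones. Because the hypotheses of Lemma~\ref{lem:projBirational} hold, $\pi$ restricts to a birational map $\mathrm{Ram}(\pi |_X) \to \mathrm{Br}(\pi |_X)$, so by Lemma~\ref{lem:dimensioPolar} we have $\dim \mathrm{Br}(\pi |_X) = n - k - 1$; in particular, $\mathrm{Br}(\pi |_X)$ is a hypersurface in $K^\perp \cong \CC^{n-k}$.

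Applying \eqref{eq:polarFormula} to the affine cone $\mathrm{Br}(\pi |_X)$ with ambient dimension $n-k$ and variety dimension $n-k-1$ gives
\begin{equation*}
    \textnormal{gEDD}(\mathrm{Br}(\pi |_X)) = \sum_{j=0}^{n-k-1} \delta_j(\mathrm{Br}(\pi |_X)).
\end{equation*}
The heart of the proof is then the shifted identity $\delta_j(\mathrm{Br}(\pi |_X)) = \delta_{j+k}(X)$ for $0 \leq j \leq n-k-1$, which upon the substitution $j' = j+k$ yields precisely $\sum_{j'=k}^{n-1} \delta_{j'}(X)$. I would establish this identity by combining two classical facts from the theory of polar varieties: (i) a birational generic linear projection of an affine cone preserves polar degrees, up to an index shift matching the drop in ambient dimension; applied to $\pi$ restricted to $\mathcal{P}(X,K)$, this gives $\delta_j(\mathrm{Br}(\pi |_X)) = \delta_{j+k}(\mathcal{P}(X,K))$. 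And (ii) for generic $K$ of dimension $k$ and generic $V$ of dimension $j+k \geq k$, the iterated polar variety $\mathcal{P}(\mathcal{P}(X,K),V)$ has the same degree as $\mathcal{P}(X,V)$, so $\delta_{j+k}(\mathcal{P}(X,K)) = \delta_{j+k}(X)$. The intuition behind (ii) is that both iterated and direct polar varieties pick out the locus where tangent spaces of $X$ meet a generic $(j+k)$-plane non-transversely.

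The main obstacle is verifying the iterated polar-variety identity (ii), which requires comparing tangent spaces of $\mathcal{P}(X,K)$ with those of $X$. A self-contained argument would follow the spirit of the proof of Lemma~\ref{lem:dimensioPolar}: realize polar classes as pullbacks of Schubert cycles under the Gauss map, and apply Kleiman's transversality theorem to the iterated Grassmannian intersection defining $\mathcal{P}(\mathcal{P}(X,K),V)$. An alternative route exploits the projective duality developed in Lemma~\ref{lem:projBirational}: via the maps $\alpha$, $\delta$, $\beta$ constructed there, $\mathrm{Br}(\pi |_X)$ is birationally related to $X^\vee \cap K^\vee$, so its polar degrees can be read off from those of $X^\vee \cap K^\vee$, which in turn correspond to the shifted polar degrees of $X$ by the classical symmetry of polar classes under projective duality combined with Bertini-type intersection with $K^\vee$.
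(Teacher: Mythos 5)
Your overall strategy is the same as the paper's: apply \eqref{eq:polarFormula} to the hypersurface $\mathrm{Br}(\pi|_X) \subseteq K^\perp$ and then prove the shifted identity $\delta_j(\mathrm{Br}(\pi|_X)) = \delta_{j+k}(X)$. However, the way you propose to prove that identity --- factoring it through the polar degrees of the ramification locus as (i) $\delta_j(\mathrm{Br}(\pi|_X)) = \delta_{j+k}(\mathcal{P}(X,K))$ and (ii) $\delta_{j+k}(\mathcal{P}(X,K)) = \delta_{j+k}(X)$ --- does not work: both intermediate claims are false in general, because $K$ is in special position with respect to its own polar variety (every tangent space of $X$ along $\mathcal{P}(X,K)$ meets $K$ excessively), so $\pi$ is \emph{not} a generic projection of $\mathcal{P}(X,K)$, and the tangent spaces of $\mathcal{P}(X,K)$ cannot be replaced by those of $X$. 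Concretely, let $X \subseteq \CC^4$ be the cone over a general smooth surface $S \subseteq \PP^3$ of degree $e \geq 3$ and let $k=1$, so $K$ corresponds to a general point $p$ and $\mathcal{P}(X,K)$ is the cone over the polar curve $C = \{z \in S \mid p \in \mathbb{T}_z S\}$, a smooth complete intersection of type $(e,e-1)$ with $2g-2 = e(e-1)(2e-5)$. For a generic $V$ of dimension $j+k = 2$, $\deg \mathcal{P}(\mathcal{P}(X,K),V)$ counts tangent lines of $C$ meeting a general line, which is $2\deg C + 2g-2 = e(e-1)(2e-3)$, whereas $\delta_2(X) = \deg S^\vee = e(e-1)^2$; for $e=3$ this is $18$ versus $12$, so (ii) fails. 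Likewise $\delta_1(\mathrm{Br}(\pi|_X))$ is the class of the apparent contour, namely $\deg(S^\vee \cap p^\vee) = e(e-1)^2 = 12$, not $18$: the projection from $p$ creates cusps on the contour and does not preserve the polar degrees of $\mathcal{P}(X,K)$, so (i) fails as well. The composite identity $\delta_1(\mathrm{Br}(\pi|_X)) = \delta_2(X)$ is true, but it cannot be routed through $\delta_{j+k}(\mathcal{P}(X,K))$.

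The paper avoids this trap by never taking polar degrees of the ramification locus with respect to a fully generic subspace. Its Lemma~\ref{lemm:polarrecurs} shows, for generic $L \subseteq K^\perp$ of dimension $j$, that $\mathcal{P}(\pi(\mathcal{P}(X,K)),L) = \textnormal{cl}\,\pi(\mathcal{P}(X,\pi^{-1}(L)))$, i.e.\ the polar variety of the branch locus is the projection of a polar variety of $X$ itself taken with respect to $\pi^{-1}(L) = K \oplus L$; the fact that this $(j+k)$-dimensional subspace \emph{contains} $K$ is exactly what makes the identification of tangency conditions go through, and genericity of $K$ and $L$ makes $K \oplus L$ generic enough that $\deg \mathcal{P}(X,\pi^{-1}(L)) = \delta_{j+k}(X)$. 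The birationality of $\pi$ on $\mathcal{P}(X,K)$ from Lemma~\ref{lem:projBirational} (plus $K$ meeting $\mathcal{P}(X,K)$ only at the origin) then guarantees that $\pi$ preserves the degree of these particular polar varieties --- degree preservation, not preservation of all polar degrees, is all that is used. Your alternative sketch via $X^\vee \cap K^\vee$ has the same kind of leap: polar degrees are not birational invariants, so knowing that $\mathrm{Br}(\pi|_X)$ is birational to $X^\vee \cap K^\vee$ does not by itself let you read them off; duality is used in the paper only to establish birationality, while the degree bookkeeping is done by the lemma just described. To repair your argument, you would need to prove the statement of Lemma~\ref{lemm:polarrecurs} (or an equivalent) rather than invoke genericity of the projection or an iterated-polar identity.
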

To prove this claim, we first show the following technical result on polar varieties. 

\begin{lem}\label{lemm:polarrecurs}
Let $X \subseteq \RR^n$ be the affine cone over a projective variety, and let $K \subseteq \mathbb{R}^n$
be a linear subspace of dimension $k \geq n - \dim  X$. Suppose that the orthogonal projection $\pi \colon \mathbb{R}^n \rightarrow  K^\perp$ satisfies  $\dim  \pi (\mathcal{P}(X,K)) = \dim   \mathcal{P}(X,K)$.
For a generic linear subspace $L \subseteq K^\perp$, we have that 
\begin{equation}\label{eq:polarOfPolar}
    \mathcal{P}(\pi(\mathcal{P}(X, K)),L) = \textnormal{cl} \, \pi(\mathcal{P}(X, \pi^{-1}(L))). 
\end{equation}
 In particular, if $X$ satisfies the assumptions in Lemma~\ref{lem:projBirational}, then we have for generic $K$ that
\begin{equation}
    \delta_j( \pi(\mathcal{P}(X, K))) = \delta_{j+k}(X). 
\end{equation}
\end{lem}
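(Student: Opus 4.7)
The plan is to prove the set identity \eqref{eq:polarOfPolar} by verifying both inclusions via direct tangent-space computations, and then to deduce the polar-degree identity by combining this with the birationality granted by Lemma~\ref{lem:projBirational}. Throughout, write $R := \mathcal{P}(X, K)$, $Y := \pi(R)$, and $d := \dim X$. By Lemma~\ref{lem:dimensioPolar}, $\dim R = \dim Y = n - k - 1$, and for $x$ generic in $R$ the intersection attains its minimal value $\dim(\tang{x}{X} \cap K) = k+d-n+1$, which yields $\dim \pi(\tang{x}{X}) = n-k-1 = \dim Y$.

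For the inclusion ``$\supseteq$'' of \eqref{eq:polarOfPolar}, take generic $x \in \mathcal{P}(X, \pi^{-1}(L))$. Since $K \subseteq \pi^{-1}(L)$, the properness of $\tang{x}{X} + \pi^{-1}(L)$ in $\CC^n$ forces $\tang{x}{X} + K \subsetneq \CC^n$, so $x \in R$. Moreover, $\pi(x) \in \reg{Y}$ and $\tang{\pi(x)}{Y} = d\pi_x(\tang{x}{R}) \subseteq \pi(\tang{x}{X})$, so
\[ L + \tang{\pi(x)}{Y} \subseteq \pi\bigl(\pi^{-1}(L) + \tang{x}{X}\bigr) \subsetneq K^\perp, \]
yielding $\pi(x) \in \mathcal{P}(Y, L)$. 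For the reverse inclusion ``$\subseteq$'', take generic $y \in \mathcal{P}(Y, L)$ and, using the generic-finiteness assumption on $\pi|_R$, lift to $x \in R$ with $\pi(x) = y$; a Kleiman-transversality argument ensures that for generic $L$ we can take $x$ in the open locus of $R$ where $\dim(\tang{x}{X} \cap K) = k+d-n+1$ exactly. By the dimension count above, $\pi(\tang{x}{X})$ has dimension $n-k-1 = \dim Y$, and since $\tang{y}{Y} = d\pi_x(\tang{x}{R}) \subseteq \pi(\tang{x}{X})$, the two coincide. Applying $\pi^{-1}$ to $L + \tang{y}{Y} \subsetneq K^\perp$ then gives, using $K \subseteq \pi^{-1}(L)$,
\[ \pi^{-1}(L) + \tang{x}{X} = \pi^{-1}\bigl(L + \pi(\tang{x}{X})\bigr) \subsetneq \CC^n, \]
so $x \in \mathcal{P}(X, \pi^{-1}(L))$ and $y = \pi(x) \in \pi(\mathcal{P}(X, \pi^{-1}(L)))$.

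Under the hypotheses of Lemma~\ref{lem:projBirational}, $\pi$ is birational from $R$ to $Y$, and since $\mathcal{P}(X, \pi^{-1}(L)) \subseteq R$ by the first inclusion above, it is also birational on this subvariety. Hence $\degg\,\textnormal{cl}\,\pi(\mathcal{P}(X, \pi^{-1}(L))) = \degg\,\mathcal{P}(X, \pi^{-1}(L))$. Moreover, for generic $K$ of dimension $k$ and generic $L \subseteq K^\perp$ of dimension $j$, the subspace $\pi^{-1}(L) = K \oplus L$ is generic among $(j+k)$-dimensional subspaces of $\CC^n$ (the map $(K,L) \mapsto K \oplus L$ is dominant onto $\grass(j+k, \CC^n)$), so $\degg\,\mathcal{P}(X, \pi^{-1}(L)) = \delta_{j+k}(X)$. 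Combining with \eqref{eq:polarOfPolar} and the definition $\delta_j(Y) = \degg\,\mathcal{P}(Y, L)$ yields $\delta_j(Y) = \delta_{j+k}(X)$.

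The main subtlety to nail down is the Kleiman-transversality step in the reverse inclusion: one must show that for generic $L$, the polar variety $\mathcal{P}(Y, L)$ is not contained in the image under $\pi$ of the closed excess-dimension stratum $\bigcup_{i > k+d-n+1} P_{i, K}(X)$ of $R$, so that a generic $y \in \mathcal{P}(Y, L)$ admits a lift $x$ to the open locus where the tangent-intersection dimension is minimal and the equality $\tang{y}{Y} = \pi(\tang{x}{X})$ holds. The other ingredients---generic finiteness on $R$, birationality under Lemma~\ref{lem:projBirational}, and density of the family $\{K \oplus L\}$ in the Grassmannian---are standard.
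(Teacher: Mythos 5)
Your proof of the set identity \eqref{eq:polarOfPolar} follows essentially the same route as the paper: both inclusions are verified at generic points by tangent-space computations, with the hypothesis $\dim\pi(\mathcal{P}(X,K))=\dim\mathcal{P}(X,K)$ entering through the behaviour of $\tang{x}{\mathcal{P}(X,K)}$ under $\pi$. In the reverse inclusion you replace the paper's dimension-additivity argument (which combines $\dim(\tang{x}{\mathcal{P}(X,K)}\cap\pi^{-1}(L))\geq\dim L$ with $\dim(\tang{x}{X}\cap K)\geq d+k-n+1$ and $\tang{x}{\mathcal{P}(X,K)}\cap K=\{0\}$) by the identification $\tang{y}{Y}=\pi(\tang{x}{X})$ on the exact stratum; this is a harmless variant, and both versions rest on the same genericity assertion, namely that for generic $L$ a generic point of $\mathcal{P}(Y,L)$ lifts to a generic point of $\mathcal{P}(X,K)$ (and likewise in the forward inclusion that a generic point of $\mathcal{P}(X,\pi^{-1}(L))$ is generic in $\mathcal{P}(X,K)$). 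The paper states these genericity claims without further proof, so the "subtlety" you flag is handled at the same level of rigor there.

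The one place where your argument as written has a real hole is the degree step. First, birationality of $\pi$ on $\mathcal{P}(X,K)$ does not automatically restrict to birationality on the proper subvariety $\mathcal{P}(X,\pi^{-1}(L))$: a priori that subvariety could lie inside the locus where $\pi|_{\mathcal{P}(X,K)}$ fails to be injective, and you again need the genericity of $L$ (the same sweeping argument) to exclude this, as the paper does with the phrase "due to the genericity of $L$, the same holds for $\mathcal{P}(X,\pi^{-1}(L))$". Second, even a birational linear projection can lower the degree of an affine cone (equivalently, of its projectivization) when the center of projection meets the variety; to conclude $\degg\,\textnormal{cl}\,\pi(\mathcal{P}(X,\pi^{-1}(L)))=\degg\,\mathcal{P}(X,\pi^{-1}(L))$ one must also observe that $K$ intersects $\mathcal{P}(X,K)$ (hence its subvariety $\mathcal{P}(X,\pi^{-1}(L))$) only at the origin, which is exactly what the Bertini argument in the proof of Lemma~\ref{lem:projBirational} supplies and what the paper cites at this point. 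With these two remarks added, your proof is complete; the remaining ingredients, including the density of the family $\{K\oplus L\}$ in $\grass(j+k,\CC^n)$, coincide with the paper's.
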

\begin{proof}
We begin by proving \eqref{eq:polarOfPolar}.
If $\mathcal{P}(X, K) = \emptyset$, then we also have that $\mathcal{P}(X, \pi^{-1}(L)) = \emptyset$ due to $K \subseteq \pi^{-1}(L)$, and so \eqref{eq:polarOfPolar} holds trivially. 
Hence, from now on we assume that $\mathcal{P}(X, K) \neq \emptyset$.
Consider a generic $x \in \mathcal{P}(X, \pi^{-1}(L))$. Then, $\tang{x}{X} + \pi^{-1}(L) $ is a proper subspace of $\RR^n$. Since $K \subseteq \pi^{-1}(L) $, we conclude that $x \in \mathcal{P}(X, K)$. Due to the genericity of $L$, we have that $x$ is a generic point in $\mathcal{P}(X,K)$. Thus,  $\tang{\pi(x)}{\pi(\mathcal{P}(X, K))} = \pi(\tang{x}{\mathcal{P}(X,K)})$, which implies that 
\begin{equation}\label{eq:polsub}
\tang{\pi(x)}{\pi(\mathcal{P}(X, K))} + L = \pi(\tang{x}{\mathcal{P}(X,K)} + \pi^{-1}(L))    
\end{equation}
is a proper subspace of $K^\perp$. In conclusion, $\pi(x) \in \mathcal{P}(\pi(\mathcal{P}(X,K)), L)$.  

Now, consider a generic $y \in \mathcal{P}(\pi(\mathcal{P}(X,K)), L)$, and a generic preimage $x \in \mathcal{P}(X,K)$ under $\pi$. 
Due to the genericity of $L$, we have again that $x$ is a generic point on $\mathcal{P}(X,K)$, and so \eqref{eq:polsub} holds. Hence, $x \in \mathcal{P}(\mathcal{P}(X,K), \pi^{-1}(L))$, which implies that 
\begin{equation}
    \dim  \tang{x}{\mathcal{P}(X,K)} \cap \pi^{-1}(L) \geq \dim  \tang{x}{\mathcal{P}(X,K)}  +  \dim  \pi^{-1}(L) - (n-1) = \dim  L.
\end{equation}
The latter equality holds, since we have by \eqref{eq:polarOurVsClassical} and Lemma~\ref{lem:dimensioPolar} that $\dim  \mathcal{P}(X,K) = n-k-1$.
Moreover, $\dim  \tang{x}{X} \cap K \geq d + k -n + 1$, where $d := \dim  X$.
Furthermore, the assumption $\dim  \pi (\mathcal{P}(X,K)) = \dim   \mathcal{P}(X,K)$ and the fact that $x$ is a generic point on $\mathcal{P}(X,K)$ yield that $\tang{x}{\mathcal{P}(X, K) }  \cap K = \{ 0\}$. Putting everything together,
\begin{equation}
\begin{aligned}
    \dim  \tang{x}{X} \cap \pi^{-1}(L) &\geq \dim  \tang{x}{\mathcal{P}(X,K)} \cap \pi^{-1}(L) + \dim   \tang{x}{X} \cap K  \\
    &\geq \dim  L + d + k - n + 1 = \dim  \pi^{-1}(L) + d - n + 1.
\end{aligned}
\end{equation}
We conclude that $x \in \mathcal{P}(X, \pi^{-1}(L))$, as desired. 

Finally, as we have now proven \eqref{eq:polarOfPolar},  we observe for $j := \dim  L$ that 
$\delta_j( \pi(\mathcal{P}(X, K)))
= \degg \mathcal{P}(\pi(\mathcal{P}(X, K)), L)
= \degg \pi(\mathcal{P}(X, \pi^{-1}(L)))
$.
Since $\pi$ restricted to $\mathcal{P}(X,K)$ is birational and its kernel $K$ intersects $\mathcal{P}(X,K)$ only at the origin (by Lemma~\ref{lem:projBirational} and its proof), the map $\pi$ does not change the degree of $\mathcal{P}(X,K)$. Due to the genericity of $L$, the same holds for $\mathcal{P}(X,\pi^{-1}(L))$, i.e., 
$\degg \pi(\mathcal{P}(X, \pi^{-1}(L))) = \degg \mathcal{P}(X, \pi^{-1}(L))$.
The latter equals $\delta_{j+k}(X)$ due to the genericity of $K$.
\end{proof}

\begin{proof}[Proof of Theorem~\ref{thm:polardegs}] 
Recall that in the setting of affine cones where $k \geq n - \dim  X$ we have that $\pi(\mathcal{P} (X,K)) = \mathrm{Br}(\pi |_X)$. Hence, by \eqref{eq:polarFormula} and Lemma~\ref{lemm:polarrecurs}, we conclude
    \begin{align}
        \textnormal{gEDD} (\mathrm{Br}(\pi |_X)) = 
        \sum_{j=0}^{n-k-1} \delta_j (\mathrm{Br}(\pi |_X))
        = \sum_{j=k}^{n-1} \delta_{j} (X).
    \end{align}
This is the stated formula for the generic EDD in terms of polar classes.
\end{proof}

\bigskip
\section{Case Studies} \label{sec:examples}
In this section, we extensively discuss two examples arising in machine learning. 

\subsection{Linear Neural Networks}\label{sec:determinant}
Motivated by Example \ref{exmp:determinantal}, we consider the case of the \emph{determinantal variety}, which is the neuromanifold of a multilayer perceptron with identity activation function $\sigma(x)=x$ and without bias vectors. Given $r, \numm{in}, \numm{out}$, we denote by $X$ the variety of $\numm{out}\times \numm{in}$ matrices of rank at most $r$. 
In the following, we use the notation of Section~\ref{sec:machinepersp}. 
Since the networks we consider in this section are linear, the Veronese embedding $\nu_D$ is not needed.
For the purpose of optimizing the loss \eqref{eq:losss}, given the dataset $\mathcal{S}$, we first prove that the matrix $AA^\top \in \RR^{\numm{in} \times \numm{in}}$ can be assumed to be diagonal with entries in $\{ 0,1 \}$. As a consequence, the quadric $Q = I_{\numm{out}}\otimes AA^\top$ from \eqref{special_Q} is a (potentially degenerate) standard quadric, still with a tensor structure. Thus, the scenario discussed in this section is radically different from the previous sections, where the quadratic form $Q$ was assumed to be generic. 
\begin{lem}\label{lem_rewrite_L(W)}
Let $A=U\Sigma V^\top $ be a singular value decomposition of $A$, where $U$ and $V$ are orthogonal matrices and the entries of the diagonal matrix $\Sigma$ are in non-ascending order.
Write $\Sigma = DP$, where $D\in\mathbb R^{\numm{in}\times \numm{in}}$ is an invertible diagonal matrix and $P \in \RR^{\numm{in} \times |\mathcal{S}|}$ is a (generally non-square) diagonal matrix whose first $\rank(A)$ entries are $1$ and all others $0$. 
 Then, we have  $\mathcal L(W) = \| WUD - R\|_{PP^\top}^2 +  \textnormal{const},$ where $R:=B V\Sigma^{\dagger}D\in\mathbb R^{\numm{out} \times \numm{in}}$.
In particular, the optimization problem (\ref{eq:lossopt}) is equivalent to solving
\begin{equation}\label{new_problem}\arg\min_{\hat W\in X }\| \hat W - R\|_{PP^\top}^2 .\end{equation}
\end{lem}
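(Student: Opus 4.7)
The plan is to reduce the Frobenius-norm loss \eqref{eq:losss_alternative} to the claimed $PP^\top$-weighted form by exploiting the orthogonality of $V$ together with the factorization $\Sigma = DP$. Since the Frobenius norm is invariant under right-multiplication by an orthogonal matrix, I would first substitute the SVD and peel off $V^\top$:
\begin{equation*}
    \mathcal L(W) \;=\; \|WU\Sigma V^\top - B\|_{\textnormal{Frob}}^2 \;=\; \|WU\Sigma - BV\|_{\textnormal{Frob}}^2 \;=\; \|WUDP - BV\|_{\textnormal{Frob}}^2.
\end{equation*}
Introducing $\tilde W := WUD \in \RR^{\numm{out}\times\numm{in}}$ and $\tilde B := BV \in \RR^{\numm{out}\times|\mathcal S|}$ reduces the analysis to $\|\tilde W P - \tilde B\|_{\textnormal{Frob}}^2$.

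Next I would exploit the sparse shape of $P$. Since $P \in \RR^{\numm{in}\times |\mathcal S|}$ has only its first $\rank(A)$ diagonal entries equal to $1$, the product $\tilde W P$ keeps the first $\rank(A)$ columns of $\tilde W$ and zeros out the rest. A column-wise expansion then gives
\begin{equation*}
    \|\tilde W P - \tilde B\|_{\textnormal{Frob}}^2 \;=\; \sum_{j=1}^{\rank(A)} \|\tilde W_{:,j} - \tilde B_{:,j}\|^2 \;+\; \sum_{j=\rank(A)+1}^{|\mathcal S|} \|\tilde B_{:,j}\|^2,
\end{equation*}
where the second sum is independent of $W$ and thus contributes only a constant. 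The weighted norm admits a parallel description: because $PP^\top \in \RR^{\numm{in}\times \numm{in}}$ is diagonal with $\rank(A)$ leading ones and the rest zero, one has $\|M\|_{PP^\top}^2 = \sum_{j=1}^{\rank(A)} \|M_{:,j}\|^2$ for any $M \in \RR^{\numm{out}\times\numm{in}}$.

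It therefore only remains to verify that $R$ is the matrix whose first $\rank(A)$ columns agree with $\tilde B$ and whose remaining columns vanish. A direct diagonal computation shows $\Sigma^\dagger D = P^\top$: both sides are pseudo-diagonal in $\RR^{|\mathcal S|\times\numm{in}}$ whose first $\rank(A)$ diagonal entries equal $\sigma_j^{-1}\cdot\sigma_j = 1$ and whose remaining diagonal entries are $0$ (for indices $j > \rank(A)$, the arbitrary nonzero padding used to make $D$ invertible is annihilated by the zero on the diagonal of $\Sigma^\dagger$). Hence $R = BV\Sigma^\dagger D = \tilde B P^\top$ has exactly the required column structure, and combining the two displays above yields $\mathcal L(W) = \|WUD - R\|_{PP^\top}^2 + \textnormal{const}$. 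The equivalence with the optimization problem \eqref{new_problem} then follows from the change of variables $\hat W := WUD$, which is a bijection on the determinantal variety $X$ because $UD$ is invertible and the rank of a matrix is preserved under multiplication by invertible matrices. The proof presents no real obstacle; the only delicate point is the bookkeeping required to verify $\Sigma^\dagger D = P^\top$, given that $D$ is only partially determined by $\Sigma$.
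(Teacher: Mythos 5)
Your proof is correct, but it takes a somewhat different (more self-contained) route than the paper. The paper starts from Proposition~\ref{prop:MSE}, i.e.\ from $\mathcal L(W)=\|W-BA^\dagger\|^2_{AA^\top}+\textnormal{const}$, proves the seminorm identity $\|M\|_{AA^\top}=\|MUDPV^\top\|_{\textnormal{Frob}}=\|MUD\|_{PP^\top}$, substitutes $A^\dagger=V\Sigma^\dagger U^\top$, and cancels $U^\top U=I$ to land directly on $\|WUD-BV\Sigma^\dagger D\|^2_{PP^\top}$; it never needs to know what $\Sigma^\dagger D$ looks like. You instead bypass Proposition~\ref{prop:MSE} entirely, work from the raw Frobenius form \eqref{eq:losss_alternative}, peel off $V^\top$ by orthogonal invariance, and do a column-by-column bookkeeping with the $0/1$ structure of $P$, which forces you to verify $\Sigma^\dagger D=P^\top$ so that $R=BV\,P^\top$ has the claimed column structure; your additive constant is then the explicit tail $\sum_{j>\rank(A)}\|(BV)_{:,j}\|^2$ rather than the one inherited from Proposition~\ref{prop:MSE}. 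Both arguments hinge on the same ingredients (SVD, $\Sigma=DP$, orthogonal invariance of the Frobenius norm, and the rank-preserving change of variables $\hat W=WUD$ on the determinantal variety), so the difference is one of packaging: the paper's chain of norm identities is shorter and reuses established machinery, while your version is more elementary, avoids the Moore--Penrose identity, and has the side benefit of making explicit that the trailing $\numm{in}-\rank(A)$ columns of $R$ vanish --- a fact the paper only records later, after \eqref{eq:wprp}, in the form $R=(R'\mid 0)$. One small remark: the argmin sets in \eqref{eq:lossopt} and \eqref{new_problem} correspond under the bijection $\hat W=WUD$ rather than being literally equal; your closing sentence says exactly this, matching the paper's diffeomorphism remark, so no issue.
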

\begin{proof}
For a matrix $M$, we have 
$\Vert M\Vert_{AA^\top} = \Vert MUDP V^\top \Vert = \Vert MUDP \Vert = \Vert MUD \Vert_{PP^\top} .$
Moreover, the Moore--Penrose inverse of $A$ is  $A^\dagger = V\Sigma^{\dagger} U^\top $ and so, using (\ref{eq:lossopt}), we conclude that
\begin{align*}
\mathcal L(W) =  \| W - B A^{\dagger}  \|_{AA^\top}^2 +  \textnormal{const}
&=  \| W - B V\Sigma^{\dagger} U^\top   \|_{AA^\top}^2 +  \textnormal{const}\\
&= \| (W - B V\Sigma^{\dagger}U^\top )UD \|_{PP^\top} ^2 +  \textnormal{const}\\
&= \| WUD - B V\Sigma^{\dagger}D\|_{PP^\top}^2 +  \textnormal{const}.
\end{align*}
Since the map $\reg{X} \to \reg{X}, W \mapsto WUD$ is a diffeomorphism, it yields a one-to-one correspondence between the critical points of $\min_{W \in X} \Vert WUD-R \Vert_{PP^\top}^2$ and $\min_{\hat W \in X} \Vert \hat W-R \Vert_{PP^\top}^2$.
\end{proof}

A subtlety is that the matrix $R\in \mathbb R^{\numm{out} \times \numm{in}}$ in  Lemma \ref{lem_rewrite_L(W)} has rank bounded as
\begin{equation}\rank(R) \leq p:=\min\{\numm{in}, \numm{out}, \numm{data}\}, \quad \text{ where } \numm{data} := |\mathcal{S}|,\end{equation}
since $R$ is the product of a $(\numm{out}\times \numm{data})$-matrix and a $(\numm{data}\times \numm{in})$-matrix. Thus, if the dataset size satisfies $\numm{data} < \min\{\numm{in}, \numm{out}\}$, we cannot assume that $R$ is a generic matrix, even if we assume that $A$ and $B$ are generic.
Hence, three ranks are involved in the optimization problem: the rank of $r$ of the architecture, the rank $\rank(A) = \rank(P)$ of the seminorm, and the rank $p$ of the data matrix $R$. When the dataset $\mathcal{S}$ is generic and $\numm{data}\leq\numm{in}$, we have that $\rank(A) =\numm{data}$, which we will assume from now on. We study several cases depending on the specific ordering of these ranks. We first recall a result describing the tangent space of~$X$ -- for a proof, see, e.g., \cite{Arbarello1985}.

\begin{fact}\label{lem:det_tangent_space}
Let $W\in X$ of rank $r$. Then, $W$ is a smooth point in $X$ and 
\begin{equation}
\label{eq:tensortang}
\begin{aligned}
\tang{W}{X}  &= \spann\{uv^\top \in \mathbb R^{\numm{out} \times \numm{in}} \mid  u \in \cspan(W)\ \textnormal{ or }\  v \in \rspan(W)\} \\
&=\RR^{\numm{out}} \otimes \rspan(W) + \cspan(W) \otimes \RR^{\numm{in}},
\end{aligned}
\end{equation}
where $\cspan$ and $\rspan$ denote the span of the columns and of the rows, respectively. 
\end{fact}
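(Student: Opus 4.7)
Proof proposal for Fact~\ref{lem:det_tangent_space}:

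The plan is to compute the tangent space via a rank-factorization parametrization of $X$ near $W$, and then match dimensions to deduce smoothness. Concretely, I would introduce the polynomial map
\begin{equation}
    \phi \colon \RR^{\numm{out} \times r} \times \RR^{r \times \numm{in}} \to \RR^{\numm{out} \times \numm{in}}, \qquad (U,V) \mapsto UV,
\end{equation}
whose image is exactly $X$. Pick any factorization $W = UV$ with $U,V$ of full rank $r$; such a factorization exists because $\rank(W)=r$. Then $\tang{W}{X}$ contains the image of the differential
\begin{equation}
    \mathrm{d}\phi_{(U,V)}(\dot U, \dot V) = \dot U\, V + U\, \dot V .
\end{equation}

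The key observation is that $\cspan(W) = \cspan(U)$ and $\rspan(W) = \rspan(V)$, since $U$ and $V$ have full column and row rank $r$ respectively. Hence every matrix of the form $\dot U V$ has rows in $\rspan(W)$, so it lies in $\RR^{\numm{out}} \otimes \rspan(W)$; and conversely, every element of this space can be written as $\dot U V$ because $V$ has full row rank (so right-multiplication by $V$ is surjective onto $\RR^{\numm{out}} \otimes \rspan(V)$). Symmetrically for $U\dot V$ and $\cspan(W)\otimes \RR^{\numm{in}}$. This gives the inclusion
\begin{equation}
    \RR^{\numm{out}} \otimes \rspan(W) + \cspan(W) \otimes \RR^{\numm{in}} \subseteq \tang{W}{X} .
\end{equation}
The rank-one description follows by choosing bases: any $u\otimes v$ with $v\in\rspan(W)$ or $u\in\cspan(W)$ lies in the left-hand side.

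To upgrade the inclusion to an equality and conclude smoothness simultaneously, I would compute the dimension of the candidate tangent space using inclusion-exclusion:
\begin{equation}
    \dim\bigl(\RR^{\numm{out}} \otimes \rspan(W) + \cspan(W) \otimes \RR^{\numm{in}}\bigr) = r\,\numm{out} + r\,\numm{in} - r^2 = r(\numm{out}+\numm{in}-r),
\end{equation}
since the intersection is $\cspan(W)\otimes\rspan(W)$ of dimension $r^2$. This equals the known dimension of the determinantal variety $X$. Because $\tang{W}{X}$ always contains this candidate space and cannot exceed $\dim X$ in dimension at a point where the candidate equals $\dim X$, we get equality and the Jacobian of the defining $(r+1)\times(r+1)$-minors attains maximal possible rank at $W$. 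Hence $W \in \reg{X}$ and \eqref{eq:tensortang} holds.

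The main obstacle is the subtle point of justifying that $\dim \tang{W}{X}\leq \dim X$ at $W$; I would handle this either by invoking the known dimension and irreducibility of $X$ (so that $\dim\tang{W}{X} \geq \dim X$ always, and the reverse inequality is the content of smoothness), or more cleanly by citing the Jacobian criterion together with the classical fact that the singular locus of $X$ is precisely the sub-determinantal variety of matrices of rank $\leq r-1$. Since $\rank(W)=r$, we immediately get $W\in\reg{X}$, and then the parametrization argument above pins down the tangent space exactly.
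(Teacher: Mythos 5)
The paper does not prove this statement at all: it is recorded as a classical Fact with a pointer to the literature (Arbarello et al.), so your proposal is being compared against a citation rather than an in-text argument. Your route -- parametrizing $X$ near $W$ by the multiplication map $\phi(U,V)=UV$, identifying the image of $\mathrm{d}\phi_{(U,V)}$ with $\RR^{\numm{out}}\otimes\rspan(W)+\cspan(W)\otimes\RR^{\numm{in}}$, and matching its dimension $r(\numm{out}+\numm{in}-r)$ against $\dim X$ -- is the standard textbook argument and is correct as far as it goes: the inclusion of the differential's image into $\tang{W}{X}$ is valid because $\phi$ maps into $X$, the identifications $\cspan(W)=\cspan(U)$, $\rspan(W)=\rspan(V)$ are right, and the inclusion--exclusion count (intersection $=\{UBV : B\in\RR^{r\times r}\}$ of dimension $r^2$) is correct.

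The one soft spot is how you close the gap from ``$\tang{W}{X}$ contains a space of dimension $\dim X$'' to equality plus smoothness. Your first fallback is circular: for irreducible $X$ one always has $\dim\tang{W}{X}\geq\dim X$, and the reverse inequality \emph{is} the smoothness assertion you are trying to prove, so ``invoking the known dimension'' proves nothing. Your second fallback (citing that the singular locus of $X$ is exactly the rank-$\leq r-1$ locus) does work, but it imports essentially the smoothness half of the Fact from the literature -- which is admittedly no worse than what the paper itself does. If you want the argument self-contained, the missing step is a short Jacobian computation: after row and column operations assume $W=\left(\begin{smallmatrix} I_r & 0\\ 0 & 0\end{smallmatrix}\right)$; then for each $i>r$, $j>r$ the $(r+1)\times(r+1)$ minor on rows $\{1,\dots,r,i\}$ and columns $\{1,\dots,r,j\}$ has differential at $W$ equal to $\mathrm{d}W_{ij}$ (all other cofactors vanish at $W$), so these $(\numm{out}-r)(\numm{in}-r)$ differentials are linearly independent. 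This gives $\dim\tang{W}{X}\leq r(\numm{out}+\numm{in}-r)$, hence equality with your candidate space, and simultaneously shows the Jacobian of $I_X$ attains rank equal to the codimension, i.e. $W\in\reg{X}$. With that insertion your proof is complete and independent of the cited classical fact.
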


Given $W,R \in\mathbb R^{\numm{out}\times \numm{in}}$, let $W'$ and $R'$ denote their left $\numm{out}\times \numm{data}$ blocks. The quadric function in the optimization problem (\ref{new_problem}) can then be written as 
\begin{equation}\label{eq:wprp}
\Vert W-R\Vert_{PP^\top} = \Vert W'-R'\Vert_{\textnormal{Frob}}.
\end{equation}
Note that the genericity of $\mathcal{S}$ implies that $R'$ is a generic $\numm{out} \times \numm{data}$ matrix and that $R = (R' \mid 0)$; cf.\ Proposition~\ref{prop:largeData}.

Given $k \leq r$, let $C_k$ be the set of all critical points on the determinantal variety of matrices of rank at most $r - k$ with respect to the Euclidean distance to $R'$.
Due to the Eckart--Young Theorem (see, e.g., \cite[Example 2.3]{draisma2016euclidean}), if $r-k\leq p=\min\{\numm{data}, \numm{out}\}$, then $C_k$ consists of
\begin{equation} \label{eq:CkCard}
|C_k| = {p \choose r-k}
\end{equation}
many points obtained from choosing $r-k$ of the $p$ singular values of $R'$.
We then define for all $\max\{0,r-p\}\leq k\leq r$:
\begin{align} \label{eq:Hk}
\begin{split}
    H_k:=\big\{ \begin{pmatrix} W' & W'' \end{pmatrix} \in X \,\mid\, W'&\in C_k,\ \cspan(W'')\subset \cspan (R')^\perp\oplus\cspan (W'),\\
&\textnormal{ and the }\cspan (R')^\perp\textnormal{ part of }W''\textnormal{ has rank } k\big\}. \end{split}
\end{align}
The definition of $H_k$ has the following motivation. Suppose that $W \in X$ is a critical point whose left block is $W'$, and thus $W'$ is a low-rank matrix approximation of $R'$. Then, if $W'$ has rank $r-k$, the right block $W''$ of $W$ must be chosen in such a way that $W=\begin{pmatrix} W' & W'' \end{pmatrix}$ has rank~$r$. The degrees of freedom for this are choosing the columns of $W''$ from $\cspan (R')^\perp\oplus\cspan (W')$. Here, directions in $\cspan (R')^\perp$ increase the rank, while directions in $\cspan (W')$ do not change the rank. Directions outside of $\cspan (W')$ contribute to the local perturbations of $W'$ in the tangent space and the directions orthogonal to $R'$ are the only ones which do not threaten criticality. If $p<r$, then we must additionally define for $k < r - p$:
\begin{equation}
\label{eq:hkdege}
    H_k:=\big\{ \begin{pmatrix} R' & W'' \end{pmatrix}\in X \big\}.
\end{equation}
The definition of these sets is independent of $k$; the indexing is exclusively for compatibility with the upcoming Theorem~\ref{thm:degenerateDetEDD}. This $H_k$ is equivalent to the global minimum component seen in Theorem~\ref{prop:generalXlargeK}. Note that $H_k$ in \eqref{eq:Hk} is not generally closed for $k>\max\{0,r-p\}$; its closure will allow $W''$ to have less than maximal rank. 
Moreover, $H_k$ in \eqref{eq:Hk} is empty whenever $k > \dim  \cspan (R')^\top = \numm{out}-p$ or $k > \numm{in}-\numm{data}.$
If non-empty, $H_k$ consists of 
$|C_k|$
connected components (cf.\ \eqref{eq:CkCard}), one for each possible $W'$. Each component of $H_k$ has dimension
\begin{equation}
\begin{split}
\dim (H_k)&=k\cdot (\dim \cspan (R')^\perp\oplus \cspan (W'))+r\cdot(\numm{in}-\numm{data}-k)\\
&=k\cdot (\numm{out}-p+r-k)+r\cdot (\numm{in}-\numm{data}-k) \quad\quad\textnormal{ if } k\geq r-p
\end{split}
\end{equation}
and 
\begin{equation}
\begin{split}
\dim (H_k)&=\numm{out}\cdot (r-p) + r\cdot (\numm{in}-\numm{data}-r+p)\quad\quad\quad\quad\textnormal{ if } k<r-p.
\end{split}
\end{equation}
This can be seen by counting the degrees of freedom when choosing the columns of $W''$; there are $\dim (\cspan (R')^\perp\oplus \cspan (W'))$ degrees of freedom when choosing each of the first $k$ columns, at which point $W$ achieves its maximum rank $r$, and then there are $r$ degrees of freedom when choosing each of the remaining columns.
\begin{thm}\label{thm:degenerateDetEDD}
    Given a generic $\numm{out}\times \numm{data}$ matrix $R'$ and $R = (R' \mid 0)$, the critical points of \eqref{new_problem} are precisely
    \begin{equation}\bigcup\limits_{k=\max\{0,r-\numm{data}\}}^{r}H_k.
    \end{equation}
\end{thm}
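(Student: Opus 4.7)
The plan is to rewrite the criticality condition via the tangent-space formula (Fact~\ref{lem:det_tangent_space}), recognize that it already contains an Eckart--Young criticality on the left block $W'$, and then extract the structure of the right block $W''$ from the remaining orthogonality.

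By Lemma~\ref{lem_rewrite_L(W)} the seminorm satisfies $\|M\|_{PP^\top}^2=\|M'\|_{\textnormal{Frob}}^2$, so the ambient gradient of $\tfrac12\|\hat W-R\|_{PP^\top}^2$ at $W=(W'\mid W'')$ is $(W'-R',0)$. A smooth point $W\in\reg{X}$ is critical iff this gradient lies in the Frobenius-normal space to $X$ at $W$, which by Fact~\ref{lem:det_tangent_space} equals $\cspan(W)^\perp\otimes\rspan(W)^\perp$. Unpacking this tensor condition for the block $(W'-R',0)$, and using that the first-$\numm{data}$ projection of $\rspan(W)\subseteq\RR^{\numm{in}}$ is $\rspan(W')$ (so the zero right-block imposes no further constraint), I get the two equivalent conditions
\begin{equation*}
    \text{(i)}\ \cspan(W'-R')\perp\cspan(W),\qquad \text{(ii)}\ \rspan(W'-R')\perp\rspan(W').
\end{equation*}

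Restricting (i) to $\cspan(W')\subseteq\cspan(W)$ and combining with (ii) gives $W'^\top(W'-R')=(W'-R')W'^\top=0$, which is exactly the Eckart--Young criticality of $W'$ on the rank-$s$ variety with $s:=\rank(W')$. Genericity of $R'$ makes the $p$ positive singular values distinct and $\rank(R')=p$, so Eckart--Young (e.g.\ \cite[Ex.~2.3]{draisma2016euclidean}) shows such critical points exist iff $s\leq p$ and have the form $W'=\sum_{i\in I}\sigma_iu_iv_i^\top$ for some $I\subseteq\{1,\dots,p\}$ with $|I|=s$. Setting $k=r-s$, this is precisely $W'\in C_k$ with $k\geq\max\{0,r-p\}$. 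The residual content of (i) is then $\cspan(W'-R')\perp K$, where $K$ is the orthogonal complement of $\cspan(W')$ inside $\cspan(W)$ (so $\dim K=k$). Since $W'-R'=-\sum_{j\notin I}\sigma_ju_jv_j^\top$ yields the orthogonal decomposition $\cspan(R')=\cspan(W')\oplus\cspan(W'-R')$, combining $K\perp\cspan(W')$ with $K\perp\cspan(W'-R')$ forces $K\subseteq\cspan(R')^\perp$; conversely any $k$-dimensional $K\subseteq\cspan(R')^\perp$ makes (i) hold. Because $\cspan(W'')\subseteq\cspan(W)=\cspan(W')\oplus K\subseteq\cspan(W')\oplus\cspan(R')^\perp$ (orthogonal direct sum), and $\rank(W)=r$ forces the $\cspan(R')^\perp$-component of $W''$ to span all of $K$, we recover the defining conditions of $H_k$ in~\eqref{eq:Hk}. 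Finally, when $r>p$ the indices $\max\{0,r-\numm{data}\}\leq k<r-p$ correspond to $s>p$, for which no Eckart--Young critical $W'$ of rank $>p$ exists (a dimension count gives $\rank(W')\leq\rank(R')=p$); however the unique rank-$p$ critical $W'=R'$ together with any $W''$ making $\rank(W)=r$ realizes the zero-loss family described by~\eqref{eq:hkdege}, which on $\reg{X}$ coincides with $H_{r-p}$ from~\eqref{eq:Hk}.

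The main obstacle I expect is the third step: the ``residual'' orthogonality $\cspan(W'-R')\perp K$ must be unraveled while simultaneously tracking the three decompositions $\cspan(W)=\cspan(W')\oplus K$, $\RR^{\numm{out}}=\cspan(R')\oplus\cspan(R')^\perp$, and $\cspan(W')\subseteq\cspan(R')$, and then translated into the precise rank condition on the $\cspan(R')^\perp$-component of $W''$ that is packaged into the definition of $H_k$.
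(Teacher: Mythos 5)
Your proof is correct and follows essentially the same route as the paper: both reduce criticality via the determinantal tangent-space formula to the column/row-span orthogonality conditions, recognize the Eckart--Young criticality of the left block $W'$, and use the SVD decomposition $\cspan(R')=\cspan(W')\oplus\cspan(W'-R')$ to pin down the admissible column space of $W''$ as $\cspan(R')^\perp\oplus\cspan(W')$ with the rank-$k$ condition forced by $\rank(W)=r$, treating the $p<r$ zero-loss case separately. The only difference is cosmetic: you phrase the residual orthogonality through the complement $K$ of $\cspan(W')$ inside $\cspan(W)$, whereas the paper phrases it as $\cspan(W'')\subseteq\cspan(W'-R')^\perp$ and then computes $\cspan(W'-R')^\perp=\cspan(R')^\perp\oplus\cspan(W')$.
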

This result stands in contrast to Theorems \ref{prop:generalXsmallK} and  \ref{prop:generalXlargeK}, which assume that the kernel of the quadric~$Q$  intersects the variety generically. In this scenario, $Q$ has a specific, non-generic structure. To evaluate this theorem through this lens, we note that the setting of Section \ref{sec:mildlyDegenerate} corresponds to the case where the rank $p$ of a generic $R'$ is strictly larger than  $r$, while the setting of Section \ref{sec:veryDegenerate} corresponds to the case $r-p\geq 0$, in which case we also have the critical component $H_k$ in \eqref{eq:hkdege}.
\begin{proof}
Keep notation as above. We wish to show that $W = \begin{pmatrix} W' & W'' \end{pmatrix}  \in X$ is a critical point if, and only if, $W \in H_k$, where $\rank(W') = r - k$.
From \eqref{eq:wprp} it follows that $W$ is a critical point if, and only if, $W' - R'$ is orthogonal (with respect to the standard Frobenius scalar product) to the projected tangent space $(\tang{W}{X}) \cdot P = \{MP \mid M \in \tang{W}{X} \}$ (with slight abuse of notation as the matrices in the  tangent space  contain redundant vanishing entries). From \eqref{eq:tensortang} we deduce that 
\begin{equation}
\begin{aligned}
    \tang{W}{X} \cdot P &=  \RR^{\numm{out}} \otimes (\rspan(W) \cdot P) + \cspan(W) \otimes (\RR^{\numm{in}} \cdot P) \\
    &=\RR^{\numm{out}} \otimes \rspan(W') + \cspan(W) \otimes \RR^{\numm{data}}.
\end{aligned}
\end{equation}
The orthogonal complement of the above expression is: 
\begin{equation}
\label{eq:orthotensor}
    \begin{aligned}
       (\tang{W}{X} \cdot P)^\perp &= \left(\RR^{\numm{out}} \otimes \rspan(W')\right)^\perp  \cap \left(\cspan(W) \otimes \RR^{\numm{data}}\right)^\perp \\
       &=  \cspan(W)^\perp  \otimes \rspan(W')^\perp. 
    \end{aligned}
\end{equation}
 Again by \eqref{eq:tensortang}, the tangent space at $W'$ to the determinantal variety of matrices of rank at most $r-k$ coincides with $\RR^{\numm{out}} \otimes \rspan(W') + \cspan(W') \otimes \RR^{\numm{data}}$, and is therefore contained in $\tang{W}{X}$. Together with \eqref{eq:orthotensor}, this implies that $W$ is critical if, and only if, we have $W' \in C_k$, and $\cspan(W' - R') \subseteq \cspan(W)^\perp$. The second of these two conditions reduces to $\cspan(W' - R') \subseteq \cspan(W'')^\perp$ since, when $W' \in C_k$, we already have $\cspan(W' - R') \subseteq \cspan(W')^\perp$. Taking orthogonal complements, we can rephrase the condition for criticality as follows:
 \begin{equation}\label{critical_condition}
     W = \begin{pmatrix} W' & W'' \end{pmatrix}  \in X \text{ is critical } \; \Longleftrightarrow \; W' \in C_k \text{ \& } \cspan(W'') \subseteq \cspan(W' - R')^\perp.
 \end{equation} 

The rest of the proof is based on this equivalence. 
We  now distinguish two cases:

The first case is when $p < r$ and $k < r - p$. Since the generic rank of $W'$ is $  r - k  > p = \textrm{rank}(R')$, we necessarily have $W' = R'$ in this case. So, the second condition on the right hand side of \eqref{critical_condition} is vacuous. In this case, we conclude that $W$ is critical if, and only if, $W \in H_k$ (cf.\ \eqref{eq:hkdege}). 

For the second case we assume that $k \geq r - p$. 
We will show below that 
\begin{equation}
\label{eq:finalconc}
\cspan(W' - R')^\perp = \cspan (R')^\perp \oplus \cspan (W').
\end{equation}
From \eqref{eq:finalconc} we conclude the proof as follows:
Since we force $\rank(W') = r-k$, as explained after \eqref{eq:Hk}, the $\cspan (R')^\perp$ part of $W''$ is forced to have rank $k$.
Therefore, \eqref{critical_condition} is equivalent to the condition for $H_k$ in \eqref{eq:Hk}.

Now, it remains to show \eqref{eq:finalconc}.
From the Eckart--Young Theorem  it follows that, if $R'=T_1\Sigma T_2$ is a singular value decomposition, then we must have $W'=T_1\Sigma'T_2$ where the singular values satisfy $\sigma'_i=\sigma_i$ for $r-k$ indices $i$ and all other $\sigma_i'=0$ \cite[Ex. 2.3]{draisma2016euclidean}. In particular, $W' - R' = T_1 (\Sigma' - \Sigma)T_2$. By the genericity of~$R'$,  there are $p$ non-vanishing and distinct singular values $\sigma_i$. Without loss of generality, we assume that the first $p$ $\sigma_i$ are non-zero, and that the first $r-k$ $\sigma'_i$ are nonzero. Since $T_2$ has full rank, multiplying by it on the right preserves the column span. Moreover, since $T_1$ is orthogonal, it commutes with taking the column span, and with taking orthogonal complements:   
\begin{equation}
\label{eq:zeroconc}
\begin{aligned}
\cspan(W' - R')^\perp &= (T_1 \cdot \cspan( \Sigma - \Sigma'))^\perp\\ &= T_1 \cdot \cspan( \Sigma - \Sigma')^\perp. 
\end{aligned}
\end{equation}
Note that $\cspan( \Sigma - \Sigma')^\perp$ is  the space of vectors with vanishing entries from index $r - k + 1$ to~$p$, and can be written as $\cspan(\Sigma)^\perp \oplus \cspan(\Sigma')$. The right-hand side of \eqref{eq:zeroconc} reduces to:
\begin{equation}
\begin{aligned}
    T_1 \cdot (\cspan(\Sigma)^\perp \oplus \cspan(\Sigma')) &= (T_1 \cdot \cspan(\Sigma))^\perp \oplus (T_1 \cdot \cspan(\Sigma')) \\  
    &= \cspan (R')^\perp \oplus \cspan (W'),
\end{aligned}
\end{equation}~as desired.  
\end{proof}

\subsection{A Self-Attention Mechanism} \label{sec:attention}
We consider the self-attention mechanism from Example~\ref{ex:attention}, where $e'=1$ and $e=t=a=2$, and explain the missing details. 

\paragraph{The neuromanifold and its ambient space.}
The neuromanifold $X$ is a subset of the space of cubic functions $\RR^{2 \times 2} \to \RR^{1 \times 2}$, which has dimension $40$. 
However, the linear span of the neuromanifold $X$ is just six-dimensional. 
To see this,  we denote by $\alpha_1$ and $\alpha_2$ the linear forms (in two variables) that take the inner product with the first and second column of the attention matrix $A$, respectively.
Similarly, $\nu$ is the linear form (also in two variables) that is the inner product with the row vector $V$.
Finally, defining $q_i := \nu \cdot \alpha_i$ and denoting by $v,w \in \RR^2$  the two columns of $M$, we can write the self-attention mechanism \eqref{eq:attention} as 
\begin{equation}
\label{eq:attentionViaQuadratics}
\begin{aligned}
    &(v, \, w)\; \mapsto \;\begin{pmatrix}
        v_1 \, q_1(v) + v_2 \, q_2(v) + v_1 \, q_1(w) + v_2 \, q_2(w) \\[.25em]
        w_1 \, q_1(w) + w_2 \, q_2(w) + w_1 \, q_1(v) + w_2 \, q_2(v)
    \end{pmatrix}^\top.
\end{aligned}
\end{equation}
Since $q_1,q_2$ are quadratic forms in two variables, together they have six coefficients. These coefficients can be read off from \eqref{eq:attentionViaQuadratics} (e.g., from the last two summands), and all coefficients of the monomial terms appearing in \eqref{eq:attentionViaQuadratics} are linear combinations of the coefficients of the $q_i$. 
This shows that the neuromanifold $X$ spans an $\RR^6$ (i.e., the linear span of the points on $X$ is $6$-dimensional) and that we can take the coefficients of the $q_i$ as its coordinates.
In those coordinates, the  neuromanifold $X$ is the set of those pairs of quadratic forms $(q_1,q_2)$ that have a common real linear factor:
\begin{equation} \label{eq:attentionParamRes}
	X = \left\{ (\nu \cdot \alpha_1, \nu \cdot \alpha_2) \;\big|\; \alpha_1,\alpha_2,\nu \in \RR[x,y]_1 \right\} \subseteq \RR[x,y]_2^2 \cong \RR^6.
\end{equation}

\paragraph{Semialgebraic description.}
 From \eqref{eq:attentionParamRes}, we easily see why the neuromanifold $X$ is not Zariski closed, as explained in Example~\ref{ex:attentionInequ}: 
 Its Zariski closure $\bar X$ is the hypersurface in $\RR[x,y]_2^2 \cong \RR^6$ that is cut out by the resultant of the pair of quadratic forms $(q_1,q_2)$. In particular, 
 $$\dim \bar X = 5.$$
 The resultant is shown in~\eqref{attention_poly}.
The zero locus $\bar X$ of the resultant consists of all pairs of real quadratic forms with a common linear factor. 
If that common factor is complex and non-real, the two quadratic forms are equal up to scaling by a constant. 
Pairs of quadratic forms with such a non-real common factor do not lie in the neuromanifold $X$; they correspond to the dashed curve segment in Figure~\ref{fig:attention}.
These pairs $(q_1,q_2)$ can be distinguished from the pairs in $X$ by that the discriminant of the quadratic forms $q_i$ is negative. 
In other words, a semialgebraic description of the neuromanifold $X$ is given by the resultant in \eqref{attention_poly} being zero and the discriminant of both quadratic forms $q_1,q_2$ being non-negative, as shown in \eqref{eq:attentionInequalities}. 

\paragraph{Singularities and boundary.}
We have also mentioned in Example~\ref{ex:attentionSing} that the variety $\bar X$ is singular. 
Its singular locus consists of all pairs of quadratic forms $(q_1,q_2)$ that are equal up to scaling by a constant \cite[Thm.~3.4]{henry2024geometry}.
Such proportional quadratic forms with positive discriminant correspond to the solid orange curve segment in Figure~\ref{fig:attention}.
When the discriminant is zero (meaning that the proportional quadratic forms are squares of a linear form), we obtain the Euclidean boundary of $X$ inside $\bar X$, which -- as described in Example~\ref{ex:attentionInequ} -- is the points in Figure~\ref{fig:attention} where the solid orange curve segment goes over into the dashed one.

\paragraph{Quadratic optimization.}
We now discuss the table at the end of Example~\ref{ex:attention} and see that it indeed aligns with our results. The table lists $\crit_{\bar X, Q}(u)$ for a generic $u \in \RR^6$ and a generic quadratic form on $\RR^6$ with $k$-dimensional kernel.
We computed its entries with \texttt{Macaulay2} \cite{M2}.
The entry for $k=0$ shows  $\textnormal{gEDD}(\bar X)=14$.
For $k=1$, we get the same number of critical points, but they come in two types:
1) Four critical points are zero-loss solutions (the number four arises from intersecting the resultant in \eqref{attention_poly} that has degree four with a generic affine line that is the translated kernel of the quadric).
2) Ten critical points lie on the ramification locus \eqref{eq:Ram}.
Similarly, for $k \geq 2$, some critical points are zero-loss solutions; they form a $(k-1)$-dimensional intersection of a kernel translate with the hypersurface $\bar X$.
For $k=2$, there are four additional critical points that are located on the ramification locus \eqref{eq:Ram}.
However, for $k \geq 3$, the ramification locus in \eqref{eq:Ram} is empty due to the following geometric reason:
As the resultant in \eqref{attention_poly} is homogeneous, we can view $\bar X$ as a projective variety in $\PP^5$. 
Its dual variety $\bar X^\vee$ is a surface in $(\PP^5)^\ast$. 
(In fact, the dual $\bar X^\vee$ is isomorphic to the set of pairs of proportional quadratic forms that are squares of linear forms. Observe that the real locus of that set is the boundary of $X$ as described above. 
This is a particular instance of the dualities and resultants investigated in the classical textbook \cite{gkz}.) 
Since the Zariski closure of $\tau(X)$  and $\bar X^\vee$ are isomorphic, simply by projectivizing each tangent hyperplane of $X$, we have that $\dim \tau(X) =2$.
Now it follows from \eqref{eq:Ram} and Lemma~\ref{lem:dimensioPolar} (cf. also the right-most condition in Lemma~\ref{lem:dimEsignificant})
 that the ramification locus is empty whenever $k \geq 3$.

However, as discussed in Section \ref{sec:machinepersp}, quadratic forms representing the mean-squared error loss on polynomial models have the special and non-generic structure \eqref{special_Q}. We saw in the last section, where we discussed the example of a determinantal variety, that the number of critical points can deviate from the generic count in Theorem \ref{main_informal}. The same happens here for the self-attention mechanism. 
We consider a minimization of the mean-squared error loss \eqref{eq:losss}, given some data $\mathcal{S}$. 
For generic $\mathcal{S}$ of fixed cardinality, the rank of the resulting quadric \eqref{special_Q} is, due to its tensor struture, $\mathrm{rank}(Q) = \min \{ 6, 2 \cdot |\mathcal{S}| \}$.
Thus, the dimension of the kernel ranges in $k \in \{ 0,2,4\}$.
Using \texttt{Macaulay2}, we compute the set of complex critical points:

\begin{center}
\begin{tabular}{c|c|c}
$|\mathcal{S}|$&$k = \dim  K$ & complex critical point set \\ \hline
$\geq 3$ & $0$ & $14$ points\\
$2$ & $2$ &  a curve and two lines \\
$1$ & $4$ &a $3$-dimensional subvariety\\
\end{tabular}
\end{center}

This agrees with the cases of generic quadrics with $k$-dimensional kernel listed in the table in Example~\ref{ex:attention}, except that the 4 points on the ramification locus for $k=2$ become instead the two lines in the table above. These two lines are contained in the ramification locus. This is because the 
kernel of the special quadric $Q$ is not in general position with respect to the variety $\bar X$, causing the ramification locus to be one dimension greater than expected. 
The geometry in this case can be imagined as in the bottom two pictures in Figure \ref{fig:curveEllipsoid} -- except that, instead of isolated critical points, there are two lines of critical points on the ramification locus. The loss is constant along each of those lines.

\bigskip
\section{Conclusion}
This work is motivated by deep learning, where a mean-squared error loss with fewer data points than parameters results in a degenerate quadratic loss function. We count the (complex) critical points of a degenerate quadratic loss $Q$ defined on an algebraic variety $X\subset \mathbb R^n$. When the quadric is not degenerate, this count is known as the Euclidean distance degree \cite{draisma2016euclidean} (EDD). We generalize the EDD to the degenerate setting.

Theorems \ref{prop:generalXsmallK} and \ref{prop:generalXlargeK} prove that, for a generic degenerate quadric $Q$, counting critical points falls into two regimes depending on the dimension $k=\dim K$ of the kernel $K$ of $Q$ relative to the dimension $d=\dim X$. When the quadric is mildly degenerate ($k<n-d$), then critical points are in bijection with critical points on $Y= \textnormal{cl}\, \pi(X)$, where $\pi$ is the orthogonal projection onto orthogonal complement of~$K$.  On the other hand, when the quadric is significantly degenerate ($k\geq n-d$), there are two classes of critical points. One class comes from points with zero loss, and the other comes from the branch locus of $\pi$.  
Furthermore, Theorem \ref{thm:polardegs} gives a formula for the number of critical points arising from the branch locus, expressed in terms of polar degrees, that extends the classical polar-degree sum formula for the EDD.

However, in deep learning the quadric has the form \eqref{special_Q} and is hence not generic. We exemplified this in Section \ref{sec:examples}, where we computed the critical points when 
minimizing the mean-squared error loss over deep linear fully-connected neural networks or a lightning self-attention mechanism.
We hope that this sparks interest in both the applied algebraic geometry and deep learning community to further study neuromanifolds and  Euclidean distance degrees for quadrics of the form \eqref{special_Q}.

\clearpage 


\section*{Acknowledgements}
G. L. M.  and K. K. were supported by the Wallenberg AI, Autonomous Systems and Software Program (WASP) funded by the Knut and Alice Wallenberg Foundation. 
K. K. was also  supported by the Swedish Foundations' Starting Grant \emph{Algebraic Vision} funded by the Ragnar Söderbergs stiftelse.
P. B. was supported by DFG, German Research Foundation -- Projektnummer 445466444.

\bigskip
\bibliographystyle{plainnat}
\bibliography{biblio}


\end{document}